\documentclass[11pt]{article}
\usepackage{amssymb,amsmath,amsthm,rotating,setspace}
\usepackage{enumerate}
\usepackage{cases}
\topmargin -.5in
\textheight 9in
\textwidth 6.5in
\oddsidemargin 0.0in
\evensidemargin 0.0in

\numberwithin{equation}{section}

\newcommand{\n}{\noindent}
\newcommand{\bb}[1]{\mathbb{#1}}

\newcommand{\vp}{\varepsilon}


\theoremstyle{plain}
\newtheorem{thm}{Theorem}[section]
\newtheorem{lem}{Lemma}[section]

\newtheorem{cor}{Corollary}[section]

\theoremstyle{definition}

\newtheorem{rem}{Remark}

\begin{document}

\title{Nonexistence of Positive Supersolutions of Nonlinear Biharmonic
  Equations without the Maximum Principle}

\author{Marius Ghergu\footnote{School of Mathematical Sciences,
    University College Dublin, Belfield, Dublin 4, Ireland; {\tt
      marius.ghergu@ucd.ie}} and Steven
  D.~Taliaferro\footnote{Mathematics Department, Texas A\&M
    University, College Station, TX 77843-3368; {\tt
      stalia@math.tamu.edu}} 
\footnote{Corresponding author, Phone 001-979-845-3261, Fax 001-979-845-6028}}

\date{}
\maketitle


\begin{abstract}
  We study classical positive solutions of the biharmonic inequality
\begin{equation}\label{abs}
 -\Delta^2 v \geq f(v)
\end{equation}
in exterior domains in $\mathbb{R}^n$ where $f:(0,\infty)\to
(0,\infty)$ is continuous function.  We give lower bounds on the
growth of $f(s)$ at $s=0$ and/or $s=\infty$ such that inequality
\eqref{abs} has no $C^4$ positive solution in any exterior domain of
$\mathbb R^n$. Similar results were obtained by Armstrong and Sirakov
[{\it Nonexistence of positive supersolutions of elliptic equations
  via the maximum principle,} Comm. Partial Differential Equations 36
(2011) 2011-2047] for $-\Delta v\ge f(v)$ using a method which depends
only on properties related to the maximum principle. Since the maximum
principle does not hold for the biharmonic operator, we adopt a
different approach which relies on a new representation formula and an
a priori pointwise bound for nonnegative solutions of $-\Delta^2u \ge
0$ in a punctured neighborhood of the origin in $\mathbb{R}^n$.
\end{abstract}

\section{Introduction}\label{sec1}
Using a method which depends only on properties related to the maximum
principle, Armstrong and Sirakov \cite{AS} proved the following two
nonexistence result for positive solutions of
\begin{equation}\label{eq1.1}
 -\Delta v\geq f(v)
\end{equation}
in exterior domains in $\mathbb{R}^n$.

\begin{thm}[Armstrong and Sirakov \cite{AS}]\label{thm1.1.1}
Assume that $n\geq 3$ and the nonlinearity $f:(0,\infty)\to
(0,\infty)$ is continuous and satisfies
 \begin{equation}\label{eq1.2}
  \liminf_{s\to 0^{+}} \frac{f(s)}{s^{1 + \frac{2}{n-2}}} > 0.
 \end{equation}
Then the inequality \eqref{eq1.1} has no positive solution in any exterior
domain of $\mathbb{R}^n$.
\end{thm}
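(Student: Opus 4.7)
The plan is to combine a pointwise lower bound for any positive supersolution---coming from the fact that $-\Delta v \ge f(v)>0$ makes $v$ superharmonic in the exterior---with a subsequent integration of the differential inequality that exploits the hypothesis on $f$ near $0$. The exponent $p_{*} := 1 + \tfrac{2}{n-2} = \tfrac{n}{n-2}$ is sharp because $(|x|^{2-n})^{p_{*}} = |x|^{-n}$, the borderline rate for logarithmic non-integrability at infinity in $\mathbb{R}^{n}$.

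First I would fix $\rho > R_{0}$ with $m_{\rho} := \min_{|x|=\rho}v>0$, and compare $v$ in the annulus $\rho<|x|<R$ with the harmonic barrier $\varphi_{R}(x) = |x|^{2-n}-R^{2-n}$, which vanishes on $|x|=R$; the weak maximum principle together with $R\to\infty$ yields $v(x)\ge c\,|x|^{2-n}$ for $|x|\ge\rho$, with $c = m_{\rho}\rho^{n-2}$. Next I would fix $s_{0}>0$ such that $f(s)\ge c_{0}s^{p_{*}}$ for $0<s\le s_{0}$ (via \eqref{eq1.2}) and pass to the spherical average $\bar v(r)$ of $v$ on $|x|=r$, which satisfies
\begin{equation*}
-\bigl(r^{n-1}\bar v'(r)\bigr)' \ \geq\ r^{n-1}\,\overline{f(v)}(r).
\end{equation*}
A weak Harnack / barrier comparison for positive superharmonic functions would then show that each sphere $|x|=r$ (with $r$ large) carries a fixed fraction of surface measure on which $v\le s_{0}$; combined with the lower bound, this forces $\overline{f(v)}(r)\ge c_{1}r^{-n}$, hence $-(r^{n-1}\bar v'(r))'\ge c_{1}r^{-1}$ for $r\ge R_{1}$. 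Integrating once gives $-r^{n-1}\bar v'(r)\ge c_{2}\log(r/R_{1})$, and a second integration forces $\bar v(r)\to -\infty$, contradicting $\bar v>0$.

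The main obstacle will be the penultimate step: since $f$ is only assumed positive and continuous---not monotone, convex, or uniformly bounded below on compact sets away from $0$---the only usable consequence of \eqref{eq1.2} is a pointwise lower bound valid where $v(x)\le s_{0}$. One must therefore show that on each sufficiently distant sphere a nontrivial portion of surface measure satisfies $v\le s_{0}$, even though $v$ could a priori be large elsewhere. This is exactly where Armstrong--Sirakov invoke the maximum principle (through barrier and Harnack comparisons), and precisely the reason their method does not transfer directly to the biharmonic operator treated in the present paper.
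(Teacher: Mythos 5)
First, a remark on the comparison you were asked about: this paper does not prove Theorem \ref{thm1.1.1} at all. It is quoted from Armstrong and Sirakov \cite{AS} as background, and the paper's own arguments concern the biharmonic inequality \eqref{eq1.1.3}, for which the maximum-principle method behind Theorem \ref{thm1.1.1} is explicitly unavailable. So your proposal can only be judged on its own merits, against the Armstrong--Sirakov argument it is trying to reconstruct.

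Judged that way, it has two genuine gaps, and the second is fatal as written. (i) The step you flag yourself --- that every large sphere carries a fixed fraction of surface measure on which $v\le s_0$ --- is not a routine ``weak Harnack / barrier comparison.'' The weak Harnack inequality only bounds the portion of an annulus on which $v$ exceeds a large multiple of $\inf_{A_r}v$, so it gives your claim only after one knows that $\inf_{|x|=r}v$ drops below a fixed small multiple of $s_0$ for all large $r$. Since \eqref{eq1.2} says nothing about $f$ at infinity, ruling out the scenario in which $v\ge s_0$ everywhere (with $f(v)$ small because $v$ is large on most of each annulus) requires a separate argument combining the minimum principle for $\min_{|x|=r}v$, a barrier comparison excluding $\min_{|x|=r}v\to\infty$, and the weak Harnack inequality; it cannot simply be asserted. (ii) More seriously, even granting that step, your final computation produces no contradiction. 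From $\overline{f(v)}(r)\ge c_1r^{-n}$ you correctly get $-\bigl(r^{n-1}\bar v'(r)\bigr)'\ge c_1r^{-1}$ and hence $-r^{n-1}\bar v'(r)\ge c_2\log(r/R_1)$, i.e.\ $-\bar v'(r)\ge c_2r^{1-n}\log(r/R_1)$; but for $n\ge 3$ the integral $\int_{R_1}^{\infty}s^{1-n}\log(s/R_1)\,ds$ converges, so the second integration only shows that $\bar v$ decreases by a bounded amount --- it does not force $\bar v\to-\infty$. This is exactly the delicacy of the critical exponent $p_*=\frac{n}{n-2}$: feeding the crude bound $v\ge c|x|^{2-n}$ once through the averaged inequality can never suffice (a single pass of your argument only rules out exponents up to about $\frac{2}{n-2}$), and one must either bootstrap (the first integration upgrades the decay to $\bar v(r)\gtrsim r^{2-n}\log r$, and this improvement must be converted into a pointwise bound and iterated so that the logarithmic gains accumulate) or run the quantitative maximum-principle iteration on $\inf_{|x|=r}v$ that Armstrong and Sirakov actually use. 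As it stands, the proposed proof does not reach the stated exponent $1+\frac{2}{n-2}$.
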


The exponent $1+\frac{2}{n-2}$ in \eqref{eq1.2} is optimal because,
as pointed out in \cite{AS}, for each constant $\lambda >
1+\frac{2}{n-2}$ there exists a positive constant $C$ such that a
solution of $-\Delta v=v^\lambda$ in $\mathbb{R}^n \setminus \{0\}$,
which tends to zero as $|y|\to \infty$,
is $v(y)=C|y|^{\frac{-2}{\lambda -1}}$.

\begin{thm}[Armstrong and Sirakov \cite{AS}]\label{thm1.1.2}
Let $f$ be a positive continuous function on $(0,\infty)$ which
satisfies
\begin{equation}\label{eq1.2.1} 
\lim_{s\to\infty}e^{as}f(s)=\infty\quad\text{for every $a>0$.}
\end{equation}
Then the inequality \eqref{eq1.1} has no positive solution in any
exterior domain of $\mathbb{R}^2$. 
\end{thm}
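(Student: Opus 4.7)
The plan is to reduce the inequality in two dimensions to a one-dimensional ODE inequality for the circular minimum, then exploit the growth hypothesis on $f$ via a logarithmic change of variable and a straightforward integration argument. I would proceed by contradiction: suppose $v$ is a positive $C^2$ solution of $-\Delta v\ge f(v)$ in $\mathbb{R}^2\setminus\overline{B_R}$, and set $m(r):=\min_{|x|=r}v(x)$ for $r>R$. The first and most delicate step is to show that $m$ satisfies the radial differential inequality
\[
-\bigl(rm'(r)\bigr)'\;\ge\;r\,f(m(r)),\qquad r>R,
\]
in an appropriate (distributional or viscosity) sense. The heuristic is that at a point $y_0$ with $|y_0|=r$ realizing $v(y_0)=m(r)$, the radial function $w(y):=m(|y|)$ lies below $v$ throughout the exterior domain and touches it at $y_0$; the interior minimum principle then yields $\Delta v(y_0)\ge\Delta w(y_0)=m''(r)+m'(r)/r$, and combining this with $-\Delta v(y_0)\ge f(v(y_0))=f(m(r))$ gives the stated inequality. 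Making this rigorous without any a priori regularity of $m$ is the principal technical hurdle, and I would handle it by comparison with smooth radial barriers in a viscosity framework.

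Next I would introduce the logarithmic variable $t=\log r$ and set $\phi(t):=m(e^t)$. A direct calculation converts the inequality into
\[
\phi''(t)\;\le\;-e^{2t}f(\phi(t)),\qquad t>\log R.
\]
Since $f>0$, $\phi$ is strictly concave and strictly positive. If $\phi'$ were ever negative, strict concavity would drive $\phi$ to $-\infty$, contradicting $\phi>0$; hence $\phi'\ge 0$ throughout, $\phi$ is non-decreasing with limit $\phi_\infty\in(0,\infty]$, and concavity delivers the linear upper bound $\phi(t)\le\phi(t_0)+\phi'(t_0)(t-t_0)$ for $t\ge t_0$. Integrating the ODE inequality from $t_0$ to $\infty$ and using $\phi'\ge 0$ yields the key integrability estimate
\[
\int_{t_0}^{\infty}e^{2s}f(\phi(s))\,ds\;\le\;\phi'(t_0)\;<\;\infty.
\]

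I would then split into two cases. If $\phi_\infty<\infty$, then $f\circ\phi$ is bounded below by a positive constant on $[t_0,\infty)$ (being the minimum of the positive continuous $f$ over a compact subinterval of $(0,\infty)$), which immediately forces the integral above to diverge, a contradiction. If $\phi_\infty=\infty$, set $K:=\phi'(t_0)>0$ and choose any $a\in(0,2/K)$; the hypothesis $\lim_{s\to\infty}e^{as}f(s)=\infty$ gives $f(\phi(t))\ge e^{-a\phi(t)}\ge C\,e^{-aKt}$ for $t$ large, so $e^{2t}f(\phi(t))\ge C\,e^{(2-aK)t}$, which again makes the integral infinite. Either alternative contradicts the integrability bound, completing the proof.

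The step I expect to be the main obstacle is the derivation of the radial ODE inequality for $m$, since $m$ is merely continuous a priori and the touching argument must be interpreted in a weak sense; once that inequality is in hand, the remainder is a concavity-plus-integration exercise that converts the rapid-growth hypothesis on $f$ into a divergent integral against the exponentially growing weight $e^{2t}$ coming from the two-dimensional geometry.
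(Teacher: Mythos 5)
Your argument is essentially correct, but note that this paper does not prove Theorem \ref{thm1.1.2} at all: it is quoted from Armstrong and Sirakov \cite{AS}, so there is no in-paper proof to match against. Compared with the method of \cite{AS} (which deliberately avoids any reduction to a radial ODE, since it is designed for general fully nonlinear operators that need not be rotationally invariant, and instead works with quantitative consequences of the maximum principle), your route is the more elementary one available for the Laplacian: replace $v$ by its spherical minimum $m(r)$ --- not its average, which is exactly what fails here because $f$ need not have a positive convex minorant, as the introduction of this paper points out --- and observe that $m$ is a viscosity supersolution of the radial inequality. Your identification of that reduction as the only delicate point is accurate, and your sketch of it is the standard one: a smooth test function $\varphi$ touching $m$ from below at $r_0$ lifts to the radial function $\varphi(|y|)$ touching $v$ from below at a minimizing point $y_0$ on the sphere, and since $v$ is a classical supersolution this gives $-\bigl(\varphi''(r_0)+\varphi'(r_0)/r_0\bigr)\ge f(m(r_0))$. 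After the substitution $\phi(t)=m(e^t)$ you get $\phi''\le -e^{2t}f(\phi)$ in the viscosity sense; to integrate this rigorously, use that the viscosity inequality $\phi''\le 0$ forces $\phi$ to be concave, hence $\phi'$ is monotone with finite one-sided values and $\phi$ is twice differentiable a.e.\ with the pointwise inequality holding a.e., so $\int_{t_0}^{T}e^{2s}f(\phi(s))\,ds\le\phi'(t_0^+)-\phi'(T^-)\le\phi'(t_0^+)$, exactly the bound you state. The two cases ($\phi$ bounded, giving a positive lower bound on $f(\phi)$ against the weight $e^{2t}$; $\phi$ unbounded with slope $K=\phi'(t_0^+)>0$, giving $e^{2t}f(\phi(t))\ge Ce^{(2-aK)t}$ for any $a<2/K$ by the hypothesis \eqref{eq1.2.1}) are both handled correctly, and the dependence of the admissible $a$ on the solution through $K$ is precisely why the full strength of \eqref{eq1.2.1} (all $a>0$) is needed and why the result is sharp. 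So the proposal is a sound, self-contained proof for the Laplacian, at the cost of being tied to rotational invariance, whereas the cited approach of \cite{AS} trades that simplicity for applicability to nonrotationally invariant operators.
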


Theorem \ref{thm1.1.2} is also sharp as explained in \cite{AS}.

In this paper we study the nonexistence of positive
solutions of the biharmonic inequality
\begin{equation}\label{eq1.1.3}
 -\Delta^2 v \geq f(v)
\end{equation}
in exterior domains in $\mathbb{R}^n$. When $n\ge 3$ we obtain the
following result.

\begin{thm}\label{thm1.1.3}
Let $f$ be a positive continuous function on $(0,\infty)$ which
satisfies
\begin{equation}\label{eq1.4.1}
  \liminf_{s \to 0^{+}} \frac{f(s)}{s^{1+\frac{4}{n-2}}} >0 \quad\text{and}\quad
   \lim_{s\to \infty} \frac{f(s)}{s^{-1}} = \infty.
 \end{equation} 
Then the inequality \eqref{eq1.1.3} has no $C^4$ positive solution in any
exterior domain of $\mathbb{R}^n$, $n\ge 3$. 
\end{thm}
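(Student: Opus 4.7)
The plan is to argue by contradiction and reduce the problem in an exterior domain to one near an isolated singularity via the biharmonic Kelvin transform, after which the promised representation formula and pointwise bound do the heavy lifting, and the two growth hypotheses on $f$ close the argument through a dichotomy on the behavior of $v$ at infinity.

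Assuming $v$ is a positive $C^4$ solution of $-\Delta^2 v \geq f(v)$ on $\{|y|>R_0\}$, I would set
\[
u(x) = |x|^{4-n}\, v(x/|x|^2), \qquad 0<|x|<1/R_0,
\]
and use the standard conformal identity $(\Delta^2 u)(x) = |x|^{-(n+4)}(\Delta^2 v)(x/|x|^2)$ together with $v(x/|x|^2) = |x|^{n-4} u(x)$ to obtain
\[
-\Delta^2 u(x) \;\geq\; |x|^{-(n+4)} f\bigl(|x|^{n-4} u(x)\bigr) \;\geq\; 0
\]
on the punctured ball $B_{1/R_0}\setminus\{0\}$. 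Thus $u$ is a nonnegative superbiharmonic function with an isolated singularity at the origin, and the two growth conditions on $f$ at $0$ and $\infty$ get translated into information about the inner and outer regimes of $|x|^{n-4}u(x)$.

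The next step is to bring in the paper's key tool: a representation formula that writes any nonnegative solution of $-\Delta^2 u\ge 0$ in a punctured ball as a biharmonic Newtonian potential of $-\Delta^2 u$ plus a smooth biharmonic remainder, from which I would extract an a priori pointwise (or weak-$L^p$/averaged) upper bound on $u$ of the order of the fundamental solution of $\Delta^2$, together with integrability of $|x|^{-(n+4)} f(|x|^{n-4}u)$ near the origin. Translated back to $v$, this supplies both a size control on $v$ at infinity and a control on the integral of $f(v)$ over annuli. I would then run a dichotomy: on one hand, if $v$ attains arbitrarily small values at infinity, the hypothesis $\liminf_{s\to 0^+} f(s)/s^{1+4/(n-2)}>0$ produces $-\Delta^2 u \gtrsim |x|^{-4n/(n-2)} u^{(n+2)/(n-2)}$ on an appropriate subregion; because the exponent $(n+2)/(n-2)$ is precisely the one invariant under the biharmonic Kelvin scaling, a standard test-function argument (integrating against a cutoff of $|x|^{2}$-type weight) applied to $u$ contradicts the pointwise/integrability bound. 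On the other hand, if $v$ stays away from $0$ at infinity, one of two subcases occurs: either $v$ is also bounded above, in which case $f(v)$ is bounded below by a positive constant on an infinite set and the integral control on $f(v)$ is violated directly; or $v$ is unbounded, in which case the hypothesis $\lim_{s\to\infty} s f(s)=\infty$ upgrades the lower bound on $f(v)$ enough to contradict the pointwise bound on $u$ via the same mechanism.

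The main obstacle is the representation formula and the ensuing pointwise estimate for nonnegative $u$ with $-\Delta^2 u\geq 0$ in a punctured neighborhood of the origin. In the second-order case one would compare $u$ to the fundamental solution via the maximum principle, but that tool is unavailable here. The right strategy, as the abstract suggests, is to treat $-\Delta^2 u$ as a nonnegative measure $\mu$, reconstruct $u$ from $\mu$ using the biharmonic Green representation on a ball, and then separate out the possibly singular contribution at the origin; the delicate step is to prove that this singular contribution respects the fundamental-solution scaling even without a comparison principle, by exploiting integrability of the Green kernel against $\mu$ near $0$. Once this is in hand, the three cases above are essentially bookkeeping of weighted integrals.
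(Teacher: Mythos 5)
Your opening moves coincide with the paper's: argue by contradiction, apply the $2$-Kelvin transform, and invoke the representation formula and a priori bound for nonnegative solutions of $-\Delta^2u\ge0$ near an isolated singularity. But two of the inputs you claim to extract from that theorem are not available, and the core of the contradiction argument is missing. First, the pointwise bound is $u(x)=O(|x|^{2-n})$, the order of the fundamental solution of the \emph{Laplacian}, not of $\Delta^2$; a bound $O(|x|^{4-n})$ is simply false (take $u=|x|^{2-n}$, which is positive and biharmonic in the punctured ball), and the gap between $|x|^{4-n}$ and $|x|^{2-n}$ is precisely what makes the problem delicate. Second, the representation theorem only yields the weighted estimate $\int_{|x|<1}|x|^2(-\Delta^2u)\,dx<\infty$; the unweighted integrability of $-\Delta^2u$ (your ``integrability of $|x|^{-(n+4)}f(|x|^{n-4}u)$ near the origin'') is not a consequence of the representation formula but one of the main achievements of the proof, obtained only after splitting $B_1\setminus\{0\}$ into the sets where $u\le|x|^{4-n}$ and where $|x|^{4-n}<u\le M|x|^{2-n}$, introducing the quantities $I_1,I_2,J_1,J_2$, and running Lemma \ref{lem3.1}, which is exactly where both growth hypotheses on $f$ enter quantitatively.

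The dichotomy you propose on the behavior of $v$ at infinity does not close the argument. If $\liminf_{|y|\to\infty}v=0$, the lower bound $-\Delta^2u\ge C|x|^{-\frac{4n}{n-2}}u^{1+\frac{4}{n-2}}$ holds only on the (possibly sparse) subset where $u(x)\le|x|^{4-n}$, not in a full punctured neighborhood, so no test-function computation over annuli applies as stated; and even where it does hold, the exponent $1+\frac{4}{n-2}$ is critical with respect to the admissible harmonic-order behavior $|x|^{2-n}$ (it is not the biharmonic Kelvin-invariant exponent $\frac{n+4}{n-4}$), and Mitidieri--Pohozaev-type test functions do not produce a contradiction at the exact critical exponent without substantial extra structure. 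The paper's route is different and essentially unavoidable here: it upgrades to $\int_{|x|<1}(-\Delta^2u)\,dx<\infty$ and $\int_{|x|<r}u\,dx=o(r^3)$, applies the Brezis--Lions-type Lemma \ref{lem2.5} to get the refined representation $u=\hat N+a\Phi+H$, and then splits into the case $a>0$ (dispatched by a H\"older inequality comparing $I_2$ with $J_2$) and the case $a=0$ (where $u$ is bounded, the critical inequality holds everywhere, and the contradiction comes from the spherical-average analysis via Lemma \ref{lem2.6} together with the positivity of an explicit polynomial). None of this is replaced by your sketch. Your second branch, $\liminf_{|y|\to\infty}v>0$, is essentially fine: combined with $v(y)\le M|y|^{2}$ and $sf(s)\to\infty$ it contradicts the weighted integrability directly; but that is the easy half, and the first branch is where the theorem actually lives.
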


\begin{rem}\label{rem1}
  The exponent $1+\frac{4}{n-2}$ in \eqref{eq1.4.1} is optimal because
  for each constant $\lambda \in (1+\frac{4}{n-2}, 1+\frac{4}{n-4})$
  (resp. $\lambda>1+\frac{4}{n-2}$) there exists a positive constant
  $C$ such that a solution of
 \begin{equation}\label{eq1.5}
  -\Delta^2 v=v^\lambda \quad \text{in} \quad 
\mathbb{R}^n \setminus \{0\},\ \text{$n\ge 5$ (resp. $n=3$ or 4)}, 
 \end{equation}
which tends to zero as $|y|\to \infty$, 
is $v(y)=C|y|^{\frac{-4}{\lambda - 1}}$.
\end{rem}

\begin{rem}\label{rem2}
  The exponent $-1$ in \eqref{eq1.4.1} is optimal because for each
  constant $\lambda<-1$, (resp. $\lambda\in(-3,-1)$), there exists a
  positive constant $C$ such that a solution of \eqref{eq1.5}, which
  tends to infinity as $|y|\to\infty$, is $v(y) =
  C|y|^{\frac{-4}{\lambda-1}}$.  
\end{rem}

\begin{rem}
  We conjecture that Theorem \ref{thm1.1.3} is true when in
  \eqref{eq1.4.1} the condition on $f$ at $\infty$ is replaced with
\begin{equation}\label{eq1.5.1}
\liminf_{s\to\infty} \frac{f(s)}{s^{-1}}>0.
\end{equation}
It can be shown that this conjecture is true under the added
assumption in Theorem \ref{thm1.1.3} that $v$ is radial.
\end{rem}

By Remarks \ref{rem1} and \ref{rem2}, we see, in strong
  contrast to Theorem \ref{thm1.1.1}, that a growth condition on $f$ at
  both $s=0$ and $s=\infty$ is necessary for nonexistence of positive
  solutions of \eqref{eq1.1.3} in exterior domains of $\mathbb R^n$,
  $n\ge 3$.

Our two dimensional result for \eqref{eq1.1.3} is the following
theorem. As is Theorem \ref{thm1.1.2} and in contrast to Theorem
\ref{thm1.1.3}, a growth condition on $f$ is only needed at $s=\infty$
for nonexistence of positive solutions of \eqref{eq1.1.3} in exterior
domains of $\mathbb R^2$

\begin{thm}\label{thm1.1.4}
Let $f$ be a positive continuous function on $(0,\infty)$ which
satisfies
\begin{equation}\label{eq1.4.2}
   \liminf_{s\to \infty} \frac{f(s)}{s^{-1}\log s} > 0.
 \end{equation} 
Then the inequality \eqref{eq1.1.3} has no $C^4$ positive solution in any
exterior domain of $\mathbb{R}^2$. 
\end{thm}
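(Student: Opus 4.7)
I would follow the Kelvin-transform strategy underlying Theorem~\ref{thm1.1.3}, specialized to dimension two, and argue by contradiction. Suppose $v$ is a $C^4$ positive solution of $-\Delta^2 v\ge f(v)$ on some exterior domain $\{|y|>R_0\}\subset\mathbb R^2$. The two-dimensional biharmonic Kelvin inversion $u(x)=|x|^2 v(x/|x|^2)$ converts this into a problem on a punctured disk: by the transformation formula $(\Delta^2 u)(x)=|x|^{-6}(\Delta^2 v)(x/|x|^2)$, the inequality for $v$ becomes
\[
-\Delta^2 u(x)\ge |x|^{-6}\,f\bigl(u(x)/|x|^2\bigr)\qquad\text{in }B_{1/R_0}\setminus\{0\},
\]
so $u$ is a nonnegative $C^4$ superbiharmonic function in a punctured neighborhood of the origin.

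Next, I invoke the a priori pointwise upper bound for nonnegative solutions of $-\Delta^2 u\ge 0$ on a punctured neighborhood of the origin, which is the main new technical tool of the paper as advertised in the abstract. In $n=2$ this bound reflects the fundamental solution $|x|^2\log|x|$ of $\Delta^2$ and should take the form $u(x)\le C|x|^2\log(1/|x|)$ for $|x|$ small. Undoing Kelvin inversion via $y=x/|x|^2$ then yields the logarithmic upper bound
\[
v(y)\le C\log|y|\qquad\text{for all sufficiently large }|y|.
\]

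The contradiction is then extracted by feeding this upper bound into \eqref{eq1.4.2}. On the set where $s=u(x)/|x|^2=v(y)$ is large, the hypothesis gives $f(s)\ge c\, s^{-1}\log s$, so the inequality for $u$ improves to $-\Delta^2 u(x)\ge c\,|x|^{-4}\,u(x)^{-1}\log(1/|x|)$. Combining this with the a priori upper bound $u(x)\le C|x|^2\log(1/|x|)$ and integrating against a suitable test function on shrinking annuli $\{\epsilon<|x|<\rho\}$ via the representation formula from the paper, the right-hand side diverges as $\epsilon\to 0$ while the left-hand side remains controlled by the boundary values of $u$, yielding the contradiction.

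The main obstacle is this last step, where the two logarithmic factors---the one in the upper bound on $u$ and the one in the lower bound \eqref{eq1.4.2} on $f$---are in critical balance, making the integral estimates only marginally divergent and requiring very tight control of the biharmonic Green kernel on two-dimensional annuli. A separate argument is also needed to cover the case in which $v$ is bounded, where \eqref{eq1.4.2} cannot be invoked pointwise; here one can exploit that a bounded positive solution of the inequality would force $-\Delta^2 v$ to be bounded below by a positive constant on sets where $v$ is bounded away from $0$, and this is incompatible with the $|y|^4$ growth produced by inverting $\Delta^2$ on a positive constant, since the radial biharmonic functions in $\mathbb R^2$ grow at most like $|y|^2\log|y|$.
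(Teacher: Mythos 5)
Your overall plan (2-Kelvin transform plus the a priori bound from Appendix \ref{secA}) points in the same direction as the paper, but the bound you rely on is misquoted, and the error is fatal to the rest of the sketch. For $n=2$, Theorem \ref{thmA} gives $u(x)=O\!\left(\log\frac{e}{|x|}\right)$ as $x\to 0$ (see \eqref{eq3.6}, and \eqref{eq7.4} in the proof of Theorem \ref{thm1.3}), \emph{not} $u(x)\le C|x|^2\log\frac{1}{|x|}$; the latter fails already for the superbiharmonic function $u\equiv 1$. Consequently, undoing the Kelvin transform only yields $v(y)=O(|y|^2\log|y|)$, not your claimed $v(y)\le C\log|y|$, and the stronger bound is genuinely false for positive supersolutions of biharmonic inequalities in exterior domains of $\mathbb{R}^2$: the example of Remark \ref{rem4} grows like $|y|^2\log|y|$. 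With the correct, much weaker bound, your ``critical balance'' integration over shrinking annuli has no visible way to close, and indeed this is exactly why the paper's proof of Theorem \ref{thm1.3} (which contains Theorem \ref{thm1.1.4} as the case $\sigma=0$) must work harder: it first upgrades \eqref{eq3.7} to $\int_{|x|<1}(-\Delta^2u)\,dx<\infty$ using the set $D$ where $u$ is comparable to the bound \eqref{eq7.4} together with Lemma \ref{lem2.7}, then proves $\int_{|x|<r}u\,dx=o(r^3)$, applies the Brezis--Lions type Lemma \ref{lem2.5} to obtain the refined representation $u=U+b|x|^2\log\frac{e}{|x|}+H$, and finally splits into cases according to whether $\int_{|y|<1}\log\frac{e}{|y|}(-\Delta^2u)\,dy$ is finite (using the Taylor-remainder estimate and Lemma \ref{lem2.9}), reducing both cases, after returning through the Kelvin transform and averaging, to the radial nonexistence Lemma \ref{lem2.8}. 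None of this machinery appears in your sketch, and ``integrating against a suitable test function'' is not a substitute for it.

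Your treatment of the bounded case has a second gap: Theorem \ref{thm1.1.4} imposes no hypothesis on $f$ near $s=0$, so a bounded positive $v$ does not force $-\Delta^2v$ to be bounded below by a positive constant; $v$ may approach $0$ and $\inf_{0<s\le s_0}f(s)$ may be $0$. The paper instead constructs a convex minorant $\hat f\le f$ on $(0,s_0]$, averages with Jensen's inequality, and invokes Lemma \ref{lem2.8}, whose proof produces a contradiction using only positivity of the radial average and the growth of $f$ at infinity. In short, the two load-bearing steps of your proposal --- the pointwise bound and the mechanism producing the final contradiction --- are respectively incorrect and missing.
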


\begin{rem}\label{rem4}
  The exponent $-1$ in \eqref{eq1.4.2} is optimal because for each
  constant $\lambda\in (-2,-1)$ there exists a positive constant $C$
  such that a positive solution of 
\[
-\Delta^2 v\ge v^\lambda \quad\text{in}\quad 
\mathbb{R}^2\setminus B_e(0),
\] 
which tends to infinity as $|y|\to\infty$,
is $v(y)=2C|y|^2\log|y|-|y|^b$ where
$b=2\lambda +4\in(0,2)$.
\end{rem}

\begin{rem}
  We conjecture that Theorem \ref{thm1.1.4} is true when 
  \eqref{eq1.4.2} is replaced with \eqref{eq1.5.1}. 
By Lemma \ref{lem2.8} this conjecture is true under the added
assumption in Theorem \ref{thm1.1.4} that $v$ is radial.
\end{rem}

  Since there are continuous functions $f:(0,\infty)\to(0,\infty)$
  satisfying \eqref{eq1.4.1} (resp. \eqref{eq1.4.2}) which are not
  bounded below by a convex function $g:(0,\infty)\to(0,\infty)$
  satisfying \eqref{eq1.4.1} (resp. \eqref{eq1.4.2}), one cannot
  immediately reduce the the proof of Theorem \ref{thm1.1.3}
  (resp. Theorem \ref{thm1.1.4}) to an ODE problem by the standard
  method of averaging which consists of replacing $f$ in
  \eqref{eq1.1.3} with such a $g$, averaging the resulting inequality,
  and using Jensen's inequality. In particular, obtaining nonexistence
  results for \eqref{eq1.1.3} under assumption \eqref{eq1.4.1}
  (resp. \eqref{eq1.4.2}) is much more difficult than obtaining them,
  say, for
\begin{equation}\label{convex}
-\Delta^2 v\ge v^\lambda, 
\end{equation}
where $\lambda\in\mathbb R\setminus (0,1)$ is a constant, because the
function $f(v)=v^\lambda$ is convex. Our results when applied to
\eqref{convex} give the following corollary.

\begin{cor} \label{cor1} Suppose $\lambda\in \mathbb R$, $r_0>0$ and
  $n=2$ (resp. $n\ge 3$). Then \eqref{convex} has $C^4$ positive
  solutions in $\mathbb R^n\setminus B_{r_0}(0)$ if and only if
\[
\lambda<-1 \quad
(\text{resp. $\lambda<-1 \quad\mbox{or}\quad \lambda> 1+\frac{4}{n-2}$}).
\] 
\end{cor}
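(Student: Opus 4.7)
\emph{Nonexistence.} When $n\ge 3$ and $\lambda\in(-1,1+\tfrac{4}{n-2}]$, the function $f(s)=s^\lambda$ meets hypothesis \eqref{eq1.4.1}: the condition at $0$ reads $\liminf_{s\to 0^+}s^{\lambda-1-4/(n-2)}>0$, automatic since the exponent is nonpositive; the condition at $\infty$ reads $\lim_{s\to\infty}s^{\lambda+1}=\infty$, automatic from $\lambda>-1$. So Theorem \ref{thm1.1.3} applies. When $n=2$ and $\lambda>-1$, \eqref{eq1.4.2} becomes $\liminf_{s\to\infty}s^{\lambda+1}/\log s=\infty$, and Theorem \ref{thm1.1.4} applies. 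Only the endpoint $\lambda=-1$ is missed by both theorems. Here $s\mapsto s^{-1}$ is convex on $(0,\infty)$, so if $v$ were a positive $C^4$ solution of $-\Delta^2 v\ge v^{-1}$ on $\mathbb{R}^n\setminus B_{r_0}(0)$, the spherical average $\bar v(r)=|\partial B_r|^{-1}\int_{\partial B_r}v\,dS$ would be a positive $C^4$ radial function commuting with $\Delta^2$, and Jensen's inequality would yield $-\Delta^2\bar v\ge\overline{v^{-1}}\ge\bar v^{-1}$. This contradicts the radial nonexistence result (Lemma \ref{lem2.8} for $n=2$, and its analog for $n\ge 3$ mentioned by the authors), whose hypothesis $\liminf_{s\to\infty}f(s)/s^{-1}>0$ is trivially verified by $f(s)=s^{-1}$.

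\emph{Existence.} I would construct positive $C^4$ supersolutions on $\mathbb{R}^n\setminus B_R(0)$ for some $R$ and rescale via $w(y)=av(by)$; the rescaled inequality becomes $-\Delta^2 w\ge a^{1-\lambda}b^4 w^\lambda$, so the single requirement $a^{1-\lambda}b^4\ge 1$ is always achievable (since $\lambda\ne 1$), allowing any prescribed $r_0>0$. For $\lambda>1+\tfrac{4}{n-2}$ with $n\ge 3$, pick $\beta\in(2-n,\min(0,4-n))$ with $\beta<-4/(\lambda-1)$; this intersection is nonempty exactly because $\lambda>1+\tfrac{4}{n-2}$, and $-\beta(\beta-2)(\beta+n-2)(\beta+n-4)>0$, so $v(y)=\epsilon|y|^\beta$ satisfies $-\Delta^2 v\ge v^\lambda$ for $|y|$ large because $|y|^{\beta-4-\beta\lambda}\to\infty$. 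For $\lambda<-1$ with $n\ge 3$, the mirror construction $v(y)=C|y|^\alpha$ works, with $\alpha\in(0,2)$ (resp.\ $\alpha\in(1,2)$) when $n\ge 4$ (resp.\ $n=3$), chosen so that $\alpha>4/(1-\lambda)$; the interval is nonempty since $\lambda<-1$ forces $4/(1-\lambda)<2$. For $\lambda<-1$ with $n=2$, using $\Delta^2(|y|^2\log|y|)\equiv 0$ in $\mathbb{R}^2$, the function $v(y)=2C|y|^2\log|y|-|y|^b$ with any $b\in(0,2)$ satisfies $-\Delta^2 v=b^2(b-2)^2|y|^{b-4}$ while $v\sim 2C|y|^2\log|y|$, yielding $-\Delta^2 v/v^\lambda\to\infty$; Remark \ref{rem4} records this for $\lambda\in(-2,-1)$ with the specific choice $b=2\lambda+4$, and the same shape covers $\lambda\le -2$.

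The only delicate step is the endpoint $\lambda=-1$: it lies on the boundary of both Theorems \ref{thm1.1.3} and \ref{thm1.1.4} and is not captured by either. Its resolution hinges on the specific convexity of $s\mapsto s^{-1}$ together with the radial nonexistence result cited above—exactly the kind of reduction the authors flag in the remark immediately preceding Corollary \ref{cor1} as being available precisely when $f(v)=v^\lambda$ is convex. Everything else reduces to hypothesis checks for the main theorems and to elementary power-law or log-perturbed supersolutions combined with one rescaling.
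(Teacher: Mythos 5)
Your proposal is correct and follows the same overall route as the paper's own (one-line) proof: nonexistence for $\lambda\ne-1$ by checking the hypotheses of Theorems \ref{thm1.1.3} and \ref{thm1.1.4}, the borderline case $\lambda=-1$ by reduction to the radial results, and existence from explicit examples plus scaling. Two points of comparison. For $\lambda=-1$ the paper cites Lemma \ref{lem2.8} ($n=2$) and Lemma \ref{lem2.10} ($n\ge3$); the ``analog for $n\ge3$'' you appeal to is exactly Lemma \ref{lem2.10}, and your Jensen averaging step is precisely how its proof begins, so nothing is missing there. For existence the paper scales the exact solutions of Remarks \ref{rem1}, \ref{rem2}, and \ref{rem4}; your direct supersolution construction with a free exponent ($\beta<-4/(\lambda-1)$, resp.\ $\alpha>4/(1-\lambda)$) is a bit more robust, since the pure-power exact solutions do not literally exist for, e.g., $n\ge5$, $\lambda\ge 1+\frac{4}{n-4}$, where the paper's terse appeal to the Remarks implicitly uses the extra observation that an exact solution for a nearby exponent $\lambda_0$, being $\le1$ (resp.\ $\ge1$) far out, is automatically a supersolution for every larger (resp.\ smaller) $\lambda$; your version makes this unnecessary. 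One small slip: for $n=2$ and $\lambda\in(-2,-1)$ it is not true that \emph{any} $b\in(0,2)$ makes $-\Delta^2v/v^\lambda\to\infty$ for $v=2C|y|^2\log|y|-|y|^b$, since the ratio behaves like $|y|^{\,b-4-2\lambda}(\log|y|)^{-\lambda}$, so you need $b\ge 2\lambda+4$ (with equality saved by the logarithm); as you immediately invoke Remark \ref{rem4}'s choice $b=2\lambda+4$ for that range, this is only a misstatement, not a gap.
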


\begin{proof}
  The ``if'' part of the corollary follows by scaling, if necessary,
  the examples in Remarks \ref{rem1}, \ref{rem2}, and \ref{rem4}. The
  ``only if'' part of the corollary follows from Theorems
  \ref{thm1.1.3} and \ref{thm1.1.4} when $\lambda\not= -1$ and from
  Lemmas \ref{lem2.8} and \ref{lem2.10} when $\lambda=-1$.
\end{proof}

The result in Corollary \ref{cor1} above is different from the study
of \eqref{convex} in the whole space $\mathbb R^n$. Mitidieri and
Pohozaev \cite[Theorem 7.1, pg. 31]{MP} proved that \eqref{convex} has
no solution in $\mathbb R^n$ if $1<\lambda\leq 1+4/(n-4)$. In
particular, no entire solution exists for all $\lambda>1$ in
dimensions $n=3$ and $n=4$.

Let us briefly describe the methods we employ in this paper to deal
with the biharmonic inequality \eqref{eq1.1.3}.  The method used in
\cite{AS} to prove Theorems \ref{thm1.1.1} and \ref{thm1.1.2} depends
only on properties related to the maximum principle.  Since the
maximum principle does not hold for the biharmonic operator, we adopt
a different approach to prove Theorems \ref{thm1.1.3} and
\ref{thm1.1.4} which relies on a new representation formula and an a
priori pointwise bound for nonnegative solutions of $-\Delta^2u \ge 0$
in a punctured neighborhood of the origin in $\mathbb{R}^n$,  which we
state in Appendix \ref{secA}.  We assume for contradiction that there
exists a positive solution $v(y)$ of \eqref{eq1.1.3} in an exterior
domain and apply this representation formula \eqref{eq3.8} and
pointwise bound \eqref{eq3.6} to the $2$-Kelvin transform $u(x)$ of
the function $v(y)$. A crucial step in our approach is to show using
\eqref{eq3.8} that the estimate \eqref{eq3.7} can be improved to
$$
\int_{|x|<1} -\Delta^2 u(x)\,dx<\infty.
$$
This will then imply that
$$
\int_{|x|<r}u(x)\,dx=o(r^3) \quad\mbox{ as }r\to 0^+,
$$
which will allow us to obtain with the help of Lemma \ref{lem2.5} a
refined representation formula for $u$, the crucial term of which is,
instead of \eqref{eq3.9}, 
\[
\hat N(x)=\int_{B_1(0)}\Phi(x-y)\Delta^2u(y)\,dy.
\]
Here $\Phi$ is the fundamental solution of $\Delta^2$ in ${\bb R}^n$ 
given by
\begin{numcases}{\Phi(x):=A}
|x|^{4-n} & if $n\ge 5$ \label{eq1.10}\\
\log\frac{e}{|x|} & if $n=4$ \label{eq1.11}\\
-|x| & if $n=3$ \label{eq1.12}\\
-|x|^2\log\frac{e}{|x|} & if $n=2$\label{eq1.13}
\end{numcases}
where $A=A(n)$ is a \emph{positive} constant. 
Finally we are able to raise a contradiction by providing with the help
of Lemma \ref{lem2.1} various estimates as $r\to 0^+$ of expressions
involving $\int_{|x|=r}\hat N(x)\, dx$.

The form and sign of the fundamental solution $\Phi$ have a large
influence on the proofs of Theorems \ref{thm1.1.3} and \ref{thm1.1.4}. The
proofs in cases \eqref{eq1.10} and \eqref{eq1.11} are similar but very
different from the proof in case \eqref{eq1.13}. The proof in case
\eqref{eq1.12} is a hybrid of the proofs in the other three cases.
We have tried to avoid repetition of arguments which occur in two
or more cases by giving them, without repetition, in the proofs of some
lemmas in Section \ref{sec2}.  Also, since the first few paragraphs of
the proofs in cases \eqref{eq1.10}--\eqref{eq1.12} are the same, we
have in Section \ref{sec3} presented them only once.

For simplicity and to more easily compare our results to those in
\cite{AS}, we stated in Theorems \ref{thm1.1.3} and \ref{thm1.1.4}
special cases of our more general results which are the following two
theorems and which address the nonexistence of positive solutions of
the inequality
\begin{equation}\label{eq1.3}
-\Delta^2 v\ge |y|^{-\sigma}f(v)
\end{equation}
in exterior domains in $\mathbb{R}^n$, $n\ge 2$.

\begin{thm}\label{thm1.2}
 Suppose $\sigma<2$ is a constant,
$\Omega$ is a compact subset of $\mathbb{R}^n$, $n\ge 3$, and
 $f:(0,\infty)\to(0,\infty)$ is a continuous function satisfying
 \begin{equation}\label{eq1.4}
\liminf_{s\to 0^+}\frac{f(s)}{s^{1+\frac{4-\sigma}{n-2}}}>0\quad\mbox{ and }\quad 
\lim_{s\to \infty}\frac{f(s)}{s^{-1+\frac{\sigma}{2}}}=\infty.
 \end{equation}
Then there does not exist a $C^4$ positive solution $v(y)$ of \eqref{eq1.3}
in $\mathbb{R}^n \setminus \Omega$.
\end{thm}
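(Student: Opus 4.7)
The plan is to argue by contradiction: assume $v$ is a $C^4$ positive solution of \eqref{eq1.3} in $\mathbb{R}^n\setminus\Omega$ and choose $R>0$ with $\Omega\subset B_R(0)$. Pass to the $2$-Kelvin transform $u(x)=|x|^{-(n-4)}v(x/|x|^2)$, which is a positive $C^4$ function on the punctured ball $B_{1/R}(0)\setminus\{0\}$. The conformal identity $\Delta^2 u(x)=|x|^{-(n+4)}(\Delta^2 v)(x/|x|^2)$, together with $|x/|x|^2|^{-\sigma}=|x|^{\sigma}$, transforms \eqref{eq1.3} into
\[
-\Delta^2 u(x)\ge |x|^{\sigma-n-4}f\bigl(|x|^{n-4}u(x)\bigr),\qquad 0<|x|<1/R,
\]
so the problem is localized to a positive superbiharmonic function with an isolated singularity at the origin.

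Next I would apply the representation formula \eqref{eq3.8} and the pointwise bound \eqref{eq3.6} from Appendix \ref{secA} to $u$, which provide the weak integrability of $-\Delta^2 u$ recorded in \eqref{eq3.7}. The first decisive step, exactly as flagged in the introduction, is to upgrade this to
\[
\int_{|x|<1}(-\Delta^2 u(x))\,dx<\infty.
\]
This is where the second half of \eqref{eq1.4} enters: on the set where $|x|^{n-4}u(x)$ is large, the hypothesis $f(s)/s^{-1+\sigma/2}\to\infty$ as $s\to\infty$ forces the weight-adjusted integrand $|x|^{\sigma-n-4}f(|x|^{n-4}u(x))$ to be controlled by a quantity which is already integrable thanks to the pointwise bound \eqref{eq3.6} and the estimate \eqref{eq3.7}, while on the complementary set $|x|^{n-4}u(x)$ is bounded and integrability is immediate. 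With this stronger integrability in hand, a standard spherical-averaging argument yields $\int_{|x|<r}u(x)\,dx=o(r^3)$ as $r\to 0^+$, which is the precise hypothesis needed to invoke Lemma \ref{lem2.5} and obtain a refined representation formula for $u$ whose principal term is
\[
\hat N(x)=\int_{B_1(0)}\Phi(x-y)\Delta^2 u(y)\,dy.
\]

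The contradiction is then extracted using the first half of \eqref{eq1.4}. Computing spherical means at radius $r\to 0^+$, Lemma \ref{lem2.1} yields an explicit lower bound for the spherical average of $\hat N$ in terms of $\int_{|y|<r}(-\Delta^2 u)\,dy$. Combining with the pointwise inequality $-\Delta^2 u(x)\ge c\,|x|^{\sigma-n-4}(|x|^{n-4}u(x))^{1+(4-\sigma)/(n-2)}$, which holds wherever $|x|^{n-4}u(x)$ is small, and applying Jensen's inequality to the radial average, produces a self-improving bound on the spherical mean of $u$ that is incompatible with the decay estimate $\int_{|x|<r}u(x)\,dx=o(r^3)$. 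The exponent $1+(4-\sigma)/(n-2)$ in \eqref{eq1.4} is exactly what is needed to make the Jensen step close.

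The main obstacle will be the integrability upgrade in the second paragraph: the two halves of \eqref{eq1.4} are tuned precisely to the critical exponent at which the Kelvin-transformed inequality becomes borderline, and the computation must carefully balance the weight $|x|^{\sigma-n-4}$ against the fundamental solution $\Phi$ and against both branches (large and small values of $|x|^{n-4}u(x)$) simultaneously. Moreover, the cases $n=3$ and $n=4$ will demand separate treatment from the case $n\ge 5$ because of the sign change and the logarithmic factor in $\Phi$ visible in \eqref{eq1.10}--\eqref{eq1.12}; I expect the common pieces to be isolated into the technical lemmas of Section \ref{sec2} and only the case-dependent bookkeeping of $\Phi$ to be carried out separately.
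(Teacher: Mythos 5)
Your skeleton (Kelvin transform, the Appendix \ref{secA} bounds, the upgrade to $\int_{|x|<1}(-\Delta^2u)\,dx<\infty$, the $o(r^3)$ decay, Lemma \ref{lem2.5}) matches the paper's, but the two steps that carry all the weight are not actually justified, and as described they would fail. First, the integrability upgrade: your argument is that the growth of $f$ at infinity lets you ``control'' the integrand $|x|^{\sigma-n-4}f(|x|^{n-4}u)$ on the set where $|x|^{n-4}u$ is large, while on the complementary set ``integrability is immediate.'' This does not prove what is needed. The hypothesis at infinity is a \emph{lower} bound on $f$, and the differential inequality only bounds $-\Delta^2u$ from \emph{below} by that integrand; showing the right-hand side is integrable (on either set) says nothing about $\int(-\Delta^2u)\,dx$, and on the set where $u(x)\le|x|^{4-n}$ there is no a priori upper control at all. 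The only available upper control on the mass of $-\Delta^2u$ comes from the positivity of $u$ through the representation formula \eqref{eq3.8}: the paper compares $\int_{r<|x|<1}|x|^{-4}u\,dx$ from above (split into $I_1,I_2$ using \eqref{eq3.6}) and from below by the weighted mass $\int_{r<|y|<1}\bigl(\log\frac{e|y|}{r}\bigr)(-\Delta^2u)\,dy$ via \eqref{eq3.10}--\eqref{eq3.13} (Lemmas \ref{lem2.1}, \ref{lem2.2}, \ref{lem2.4} plus a sign argument for the constant $c_4$), and only then does the condition at infinity enter, through the rate comparison $J_1(r)/I_1(r)\to\infty$ of Lemma \ref{lem3.1}, to force $I_1(r)=o(\log\frac1r)$ and hence \eqref{eq3.20}. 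For $n=3$ one additionally needs Lemma \ref{lem2.7} (positivity of spherical averages), and even there the conclusion is not ``immediate''; a direct weight-counting argument of the kind you sketch breaks down for $n\ge4$ because the trivial bound $u\le|x|^{4-n}$ on the small-$u$ set is too weak.

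Second, the endgame. After Lemma \ref{lem2.5} you get $u=\hat N+a\Phi+H$ with $a\ge0$, and your proposed contradiction (``Jensen self-improvement incompatible with $\int_{|x|<r}u\,dx=o(r^3)$'') cannot rule out the possible behaviors: if $a>0$ then $u\sim aA|x|^{4-n}$, for which $\int_{|x|<r}u\,dx\sim r^4=o(r^3)$, so there is no conflict with the decay estimate, and the small-$u$ branch of \eqref{eq3.3} (where your Jensen step lives) is not even available there; if $a=0$ then $u$ is bounded, which is again consistent with $o(r^3)$. The paper needs a genuine case analysis: for $a>0$ the contradiction comes from a H\"older-interpolation between $I_2(r)$ and $J_2(r)$ (this is where the condition at $s\to0^+$, via the normalization \eqref{eq3.1}, is used), while for $a=0$ one must analyze $\bar u(r)$ against $r^2J(r)$, show the constants $a_0$ and then $b_0$ (for $n=3$) vanish, and split further into the sub-cases resolved by Lemma \ref{lem2.6} (the ODE/Jensen argument at the critical exponent) and by the convexity sign argument with the polynomial $p(t)$. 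None of this structure is present in your sketch, so as written the proof does not close; what you have is the correct road map of Section \ref{sec3}, with the substance of Sections \ref{sec4}--\ref{sec6} still to be supplied.
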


\begin{thm}\label{thm1.3}
 Suppose $\sigma\in[0,2)$ is a constant,
$\Omega$ is a compact subset of $\mathbb{R}^2$, and
 $f:(0,\infty)\to(0,\infty)$ is a continuous function satisfying
 \begin{equation}\label{eq1.6}
  \liminf_{s\to \infty} \frac{f(s)}{s^{-1+\frac{\sigma}{2}}}
\frac{\prod_{i=2}^k\log^is}{(\log s)^{1-\frac{\sigma}{2}}}>0
 \end{equation}
 for some integer $k\ge 2$ where $\log^2=\log\circ\log$,
 $\log^3=\log\circ\log\circ\log$, etc.  Then there does not exist a
 $C^4$ positive solution $v(y)$ of \eqref{eq1.3} in $\mathbb{R}^2
 \setminus \Omega$.
\end{thm}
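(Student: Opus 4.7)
The plan is to argue by contradiction. Suppose $v$ is a positive $C^4$ solution of \eqref{eq1.3} in $\mathbb{R}^2\setminus\Omega$; after enlarging $\Omega$ if necessary we may take $\Omega=\overline{B_{R_0}(0)}$ for some $R_0>0$. Apply the $2$-Kelvin transform $u(x)=|x|^{2}v(x/|x|^2)$, under which the biharmonic operator conjugates as $\Delta_x^2 u(x)=|x|^{-6}(\Delta_y^2 v)(x/|x|^2)$, so on $B_{1/R_0}(0)\setminus\{0\}$ the function $u$ is positive, $C^4$, and satisfies
\[
-\Delta^2 u(x)\ge |x|^{\sigma-6}\,f\bigl(|x|^{-2}u(x)\bigr)\ge 0.
\]
In particular $-\Delta^2 u\ge 0$ near $0$, so the framework of Appendix \ref{secA} applies to $u$, yielding the pointwise bound \eqref{eq3.6} and the representation formula \eqref{eq3.8}.

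The second step is to bootstrap integrability using \eqref{eq1.6}. Substituting the pointwise bound \eqref{eq3.6} for $u$ and the lower bound $-\Delta^2 u\ge |x|^{\sigma-6}f(|x|^{-2}u(x))$ with $f$ obeying \eqref{eq1.6} into the representation \eqref{eq3.8}, we aim to upgrade \eqref{eq3.7} to
\[
\int_{B_1(0)}\bigl(-\Delta^2 u(x)\bigr)\,dx<\infty,
\]
and hence to $\int_{B_r(0)}u(x)\,dx=o(r^3)$ as $r\to 0^+$. Lemma \ref{lem2.5} then produces the refined representation
\[
u(x)=\hat N(x)+H(x),\qquad \hat N(x)=\int_{B_1(0)}\Phi(x-y)\Delta^2 u(y)\,dy,
\]
with $H$ smooth at the origin. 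In dimension two $\Phi(x)=-A|x|^{2}\log(e/|x|)$ with $A>0$, so $\Phi\le 0$ on $B_1(0)$, and since $\Delta^2 u\le 0$ we have $\hat N\ge 0$ there.

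The final step is to derive a contradiction from spherical averages. Using the explicit form of $\Phi$ together with Lemma \ref{lem2.1}, one expresses $\int_{|x|=r}\hat N(x)\,dS$ as $r\to 0^+$ in terms of integrals of $-\Delta^2 u$ against appropriate logarithmic weights. Inserting $-\Delta^2 u(y)\ge |y|^{\sigma-6}f(|y|^{-2}u(y))$ and applying Jensen's inequality yields an integro-differential inequality for the sphere average $\bar u(r)$ of $u$. Feeding the lower bound \eqref{eq1.6} into this inequality and iterating the resulting estimate $k$ times, each iteration absorbing one of the factors $\log^{i}s$ appearing in the denominator of \eqref{eq1.6}, produces growth of $\bar u(r)$ as $r\to 0^+$ incompatible with $\int_{B_r(0)}u=o(r^3)$.

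The principal obstacle is this $k$-fold logarithmic bootstrap: one must quantify precisely how each pass through the refined representation gains one additional $\log^{i}(1/r)$ factor in the lower bound for $\bar u(r)$, and verify that the denominator $\prod_{i=2}^{k}\log^{i}s$ in \eqref{eq1.6} is the exact amount of room these $k$ iterations require. A secondary difficulty is the integrability bootstrap of the second step: because the dimension-two fundamental solution \eqref{eq1.13} is sign-indefinite and carries a logarithmic weight, a direct positivity argument in the style of cases \eqref{eq1.10}--\eqref{eq1.12} is unavailable, and one must instead exploit the cancellations encoded in Lemma \ref{lem2.5}.
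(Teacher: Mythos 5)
Your setup (Kelvin transform, appeal to Theorem \ref{thmA}, upgrading \eqref{eq3.7} to $\int_{|x|<1}(-\Delta^2u)\,dx<\infty$ and then to $\int_{|x|<r}u\,dx=o(r^3)$, refined representation via Lemma \ref{lem2.5}) follows the paper's route, but the endgame is where the proposal breaks down. First, the contradiction you aim for cannot occur: in two dimensions the strongest singular term Lemma \ref{lem2.5} can contribute is $b\,|x|^2\log\frac{e}{|x|}$ (a term your formula $u=\hat N+H$ omits altogether, and note also that $\hat N$ itself does not satisfy \eqref{eq2.3} since $\hat N(0)\neq 0$ in general, which is why the paper subtracts the first-order Taylor polynomial of $\hat N$ before invoking Lemma \ref{lem2.5}). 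Hence any lower bound on $\bar u(r)$ extracted from the refined representation is at most of order $r^2\log\frac{e}{r}$, and such growth is perfectly compatible with $\int_{|x|<r}u\,dx=o(r^3)$, since $\int_{|x|<r}|x|^2\log\frac{e}{|x|}\,dx\sim Cr^4\log\frac{e}{r}$; no amount of iteration changes this. Second, your Jensen step is exactly the move the paper emphasizes is unavailable: $f$ is merely continuous, and a function satisfying \eqref{eq1.6} need not dominate a convex function satisfying \eqref{eq1.6}; Jensen is legitimate in the paper only at the very end, after it has been shown that $v$ is bounded above, where a convex $C^2$ minorant of $f$ on the bounded interval $(0,s_0]$ can be constructed. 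Third, the proposed ``$k$-fold bootstrap, one $\log^i$ per pass'' does not correspond to anything that works: the product $\prod_{i=2}^k\log^is$ is used exactly once, not iteratively.

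For comparison, the paper's actual argument after the Kelvin transform is: the pointwise bound $u\le M\log\frac{e}{|x|}$ from \eqref{eq3.6}, the bound \eqref{eq3.7}, and the monotone minorant of $f$ built from \eqref{eq1.6} give $\int\varphi u\,dx<\infty$ for the weight $\varphi(x)=|x|^{-2}\bigl(\log\frac{e}{|x|}\prod_{i=2}^k\log^i\frac{1}{|x|}\bigr)^{-1}$, which is non-integrable precisely because of the iterated logarithms; Lemma \ref{lem2.7} then yields $\int(-\Delta^2u)\,dx<\infty$, and a second application of the same weighted estimate gives $o(r^3)$. After the representation $u=U+b\,|x|^2\log\frac{e}{|x|}+H$, the decisive step is a dichotomy on whether $\int_{|y|<1}\log\frac{e}{|y|}\,(-\Delta^2u(y))\,dy$ converges (Fatou plus Lemma \ref{lem2.9}), which shows either $u\gg|x|^2$ or $u=O(|x|^2)$ as $x\to 0$; in both cases the contradiction is obtained not from the behavior of $\bar u$ at the origin but by reversing the Kelvin transform and applying the radial nonexistence result, Lemma \ref{lem2.8}, to $\bar v$ in the exterior domain, for which only $\liminf_{s\to\infty}s^{1-\sigma/2}f(s)>0$ is needed. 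Your proposal contains no analogue of this dichotomy or of the exterior-domain radial lemma, and as written the final step would fail.
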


\begin{rem}
  Theorems \ref{thm1.2} and \ref{thm1.3} with $\sigma=0$ immediately
  imply Theorems \ref{thm1.1.3} and \ref{thm1.1.4}, respectively.
\end{rem}

\begin{rem}
Similar to Remarks \ref{rem1}, \ref{rem2}, and \ref{rem4}, the
exponents $1+\frac{4-\sigma}{n-2}$ and $-1+\frac{\sigma}{2}$ in
\eqref{eq1.4} are optimal and so is the exponent $-1+\frac{\sigma}{2}$
in \eqref{eq1.6}.
\end{rem}

Mitidieri and Pohozaev \cite[Remark 9.1]{MP} have shown that the
problem
\[
\pm \Delta^mu\ge |x|^{-2m}|u|^q, \quad x\in\mathbb R^n\setminus \{0\},
\]
has no nontrivial weak solution if $m,n\ge 1$ and $q>1$.
Also, nonnegative solutions of
problems of the form
\begin{equation}\label{eq1.33}
 -\Delta^m u = f(x,u)\quad\text{or}\quad  -\Delta^mu \ge f(x,u)
\end{equation}
when $f$ is a nonnegative function have been studied in
\cite{CMS,CX,G,GL,H,MR,T,WX,X} and elsewhere. These problems arise
naturally in conformal geometry and in the study of the Sobolev
embedding of $H^{2m}$ into $L^{\frac{2n}{n-2m}}$.

Nonexistence results for entire solutions $u$ of problems
\eqref{eq1.33} can be used to obtain, via
scaling methods, estimates of solutions of boundary
value problems associated with \eqref{eq1.33}.
An excellent reference for polyharmonic boundary
value problems is \cite{GGS}.

Also, weak solutions of $\Delta^mu = \mu$, where
$\mu$ is a measure on a subset of ${\bb R}^n$, have been studied in
\cite{CDM,FKM,FM}, and removable isolated singularities of 
$\Delta^mu=0$ have been studied in \cite{H}.

\section{Preliminary results}\label{sec2}
In this section we provide some results needed for the proofs of Theorems
\ref{thm1.2} and \ref{thm1.3}.

\begin{lem}\label{lem2.1}
Suppose $m\ge 1$ and $n\ge 2$ are integers and $\Gamma(z)=\Gamma(|z|)$
is a radial solution of $\Delta^m \Gamma=0$ in
$\mathbb{R}^n\setminus\{0\}$. For each $r>0$, let
\begin{equation}\label{A1}
u(x;r)=\frac{1}{|\partial B_r |} \int_{|y|=r}
\Gamma(|x-y|)\,dS_y  
\quad\text{for } x\in \mathbb{R}^n.
\end{equation}
Then 
\begin{equation}\label{A2} 
u(x;r)=\begin{cases}
   \sum_{i=0}^{m-1} \frac{\Delta^i\Gamma(r)}{\alpha_i}|x|^{2i}, &\text{if } |x|<r \\
   \sum_{i=0}^{m-1} \frac{\Delta^i\Gamma(|x|)}{\alpha_i}r^{2i}, &\text{if } |x|>r
  \end{cases}
 \end{equation}
where $\alpha_0=1$ and
\[
\alpha_i=\Delta^i|x|^{2i}=i!2^i[n(n+2)(n+4)\cdots (n+2i-2)] 
\quad\text{for } i=1,2,\dots, m-1.  
\]
\end{lem}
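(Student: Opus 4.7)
The plan is to treat the two cases $|x|<r$ and $|x|>r$ via one main computation together with a symmetry reduction. First I would observe that since the sphere $\{|y|=r\}$ is rotationally invariant, $u(x;r)$ depends on $x$ only through $|x|$; call it $U(|x|;r)$. Next, since $\Delta_x^m\Gamma(|x-y|)=0$ whenever $x\ne y$ (by translation invariance applied to the polyharmonic radial function $\Gamma$), differentiation under the integral shows that $u(\,\cdot\,;r)$ is $m$-polyharmonic on the open ball $B_r$ and on its exterior.

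For $|x|<r$ the function $x\mapsto u(x;r)$ is a smooth radial polyharmonic function on $B_r$, hence lies in the span of $1,|x|^2,\dots,|x|^{2m-2}$. Write
\[
u(x;r)=\sum_{i=0}^{m-1}c_i(r)|x|^{2i}\quad\text{for }|x|<r.
\]
To identify the coefficients I would apply $\Delta^i$ to both sides and evaluate at $x=0$. Since $\Delta^i|x|^{2j}$ vanishes for $j<i$ and, for $j>i$, is a nonzero constant times $|x|^{2(j-i)}$ which vanishes at the origin, only the $j=i$ term survives, yielding $\Delta^iu(0;r)=c_i(r)\alpha_i$. On the other hand, passing $\Delta^i$ under the integral,
\[
\Delta^iu(0;r)=\frac{1}{|\partial B_r|}\int_{|y|=r}\Delta^i\Gamma(|y|)\,dS_y=\Delta^i\Gamma(r).
\]
This forces $c_i(r)=\Delta^i\Gamma(r)/\alpha_i$ and establishes the first case.

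For the case $|x|>r$ I would use a Fubini symmetry instead of solving an ODE in the exterior, which would otherwise require boundary/decay conditions unavailable in general. Define the double spherical average
\[
W(R,r)=\frac{1}{|\partial B_R|\,|\partial B_r|}\iintl_{|x|=R,\,|y|=r}\Gamma(|x-y|)\,dS_x\,dS_y.
\]
Because $U(\,\cdot\,;r)$ is already radial, the outer average over $|x|=R$ is trivial, giving $W(R,r)=U(R;r)$; by Fubini and the symmetry of $\Gamma(|x-y|)$ in $x$ and $y$, the same quantity also equals $U(r;R)$. Hence $U(R;r)=U(r;R)$ for all $R,r>0$. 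When $R=|x|>r$, the right side falls into the first case (with the roles of the point radius and sphere radius interchanged), so
\[
U(|x|;r)=U(r;|x|)=\sum_{i=0}^{m-1}\frac{\Delta^i\Gamma(|x|)}{\alpha_i}r^{2i},
\]
which is the second formula in \eqref{A2}.

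The only step that requires genuine care is the coefficient computation in the inner case, namely verifying that $\Delta^i$ kills all terms $|x|^{2j}$ with $j<i$ and kills at $x=0$ all terms with $j>i$, together with confirming the exact value $\alpha_i=i!\,2^i n(n+2)\cdots(n+2i-2)$ of $\Delta^i|x|^{2i}$; this is a routine induction using $\Delta|x|^{2k}=2k(n+2k-2)|x|^{2k-2}$. The symmetry reduction then handles the exterior case without any further work.
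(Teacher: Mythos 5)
Your proof is correct and is essentially the same as the paper's: you use the identical Fubini symmetry $U(R;r)=U(r;R)$ of the double spherical average and the identical identification of coefficients by applying $\Delta^j$ and evaluating at the origin. The only difference is cosmetic ordering — you settle the interior case first and then invoke the symmetry, whereas the paper records the symmetry before the coefficient computation.
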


\begin{proof}
Since $u(x;r)$ is radial in $x$, we can define
$v:[0,\infty)\times(0,\infty)\to\mathbb{R}$ by $v(|x|,r)=u(x;r)$ and
to prove Lemma \ref{lem2.1} it suffices to prove 
\begin{equation}\label{A3} 
v(\rho,r)=\begin{cases}
   \sum_{i=0}^{m-1} \frac{\Delta^i\Gamma(r)}{\alpha_i}\rho^{2i}, &\text{if } \rho<r \\
   \sum_{i=0}^{m-1} \frac{\Delta^i\Gamma(\rho)}{\alpha_i}r^{2i}, &\text{if } \rho>r.
  \end{cases}
\end{equation}
Since 
\begin{align*}
v(\rho,r)&=\frac{1}{|\partial B_\rho|}\int_{|x|=\rho} v(|x|,r)\,dS_x\\
&=\frac{1}{|\partial B_\rho|}\frac{1}{|\partial B_r|}
\int_{|x|=\rho} \int_{|y|=r}\Gamma(|x-y|)\,dS_y\,dS_x
\end{align*}
we see that
\begin{equation}\label{A4}
v(\rho,r)=v(r,\rho) \quad\text{for } (\rho,r)\in
(0,\infty)\times(0,\infty).
\end{equation}
Since $u(x;r)$ is a $C^\infty$ radial solution of $\Delta^mu=0$ in
$B_r(x)$ there are constants $c_i$ such that 
\[
u(x;r)=\sum_{i=0}^{m-1} c_i|x|^{2i} \quad\text{for } |x|<r. 
\]
Hence $(\Delta^ju)(0;r)=c_j\Delta^j|x|^{2j}=c_j\alpha_j$ for
$j=0,1,\dots,m-1$. On the other hand, it follows from \eqref{A1} that
$(\Delta^j u)(0;r)=\Delta^j\Gamma(r)$ and hence
\[
c_j=\frac{\Delta^j\Gamma(r)}{\alpha_j}\quad\text{for } j=0,1,2,\dots,m-1.
\]
Thus \eqref{A2}, and hence \eqref{A3}, holds for $|x|=\rho<r$ and by
\eqref{A4} we have \eqref{A3} also holds for $\rho>r$.
\end{proof}

\begin{lem}\label{lem2.2}
 Suppose $r\in (0,\frac{1}{4}]$ and $\alpha\geq 1$.  Then 
$$\left(\log\frac{e|y|}{r}\right)^\alpha -|y|^2  \geq \frac{1}{4} \left(\log
     \frac{e|y|}{r}\right)^\alpha \quad\text{for} \quad r\leq |y|\leq 1.$$
\end{lem}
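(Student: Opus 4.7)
The plan is to reduce the claim to the case $\alpha=1$ using the fact that $\log(e|y|/r)\geq 1$ on the range $r\leq|y|\leq 1$, and then verify the resulting inequality by an elementary calculus argument on the endpoints.

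First, observe that the stated inequality is equivalent to
\[
|y|^2\leq \tfrac{3}{4}\Bigl(\log\tfrac{e|y|}{r}\Bigr)^{\alpha}.
\]
Since $|y|\geq r$, we have $\log(e|y|/r)\geq\log e=1$, so raising to any power $\alpha\geq 1$ only increases this quantity. Thus
\[
\Bigl(\log\tfrac{e|y|}{r}\Bigr)^{\alpha}\geq \log\tfrac{e|y|}{r},
\]
and it suffices to prove the $\alpha=1$ version, namely $|y|^2\leq \frac{3}{4}\log(e|y|/r)$ for $r\leq|y|\leq 1$, $r\in(0,1/4]$.

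Next, I would fix $r$ and let $\phi(t)=\tfrac{3}{4}\log(et/r)-t^2$ for $t\in[r,1]$. Then $\phi'(t)=\tfrac{3}{4t}-2t$, which has a unique zero at $t_\ast=\sqrt{3/8}\in(r,1)$, and $\phi'>0$ on $[r,t_\ast)$, $\phi'<0$ on $(t_\ast,1]$. Consequently $\phi$ attains its minimum on $[r,1]$ at one of the endpoints, so it is enough to check $\phi(r)\geq 0$ and $\phi(1)\geq 0$. At $t=r$, $\phi(r)=\tfrac{3}{4}-r^2\geq \tfrac{3}{4}-\tfrac{1}{16}>0$; at $t=1$, $\phi(1)=\tfrac{3}{4}\bigl(1+\log(1/r)\bigr)-1\geq \tfrac{3}{4}(1+\log 4)-1>0$, both using $r\leq 1/4$.

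There is no real obstacle here; the argument is essentially a one-variable calculus exercise. The only subtlety is remembering at the start that $|y|\geq r$ forces $\log(e|y|/r)\geq 1$, which is what allows the reduction from general $\alpha\geq 1$ to the linear case and avoids having to carry $\alpha$ through the endpoint analysis.
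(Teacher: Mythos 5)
Your proof is correct, and it takes a genuinely different route from the paper's. The paper splits $[r,1]$ into $|y|\le 3/4$ and $|y|\ge 3/4$ and in each regime picks a crude lower bound for $\log\frac{e|y|}{r}$ (namely $1$ and $\log 3e\ge 2$ respectively), exploiting the fact that $t^\alpha$ is increasing in $\alpha$ when $t\ge1$; it never reduces to $\alpha=1$. You instead reduce uniformly to $\alpha=1$ via $(\log\frac{e|y|}{r})^\alpha\ge\log\frac{e|y|}{r}\ge1$ and then run a one-variable concavity/endpoint argument on $\phi(t)=\tfrac34\log(et/r)-t^2$. Your version is cleaner in that it eliminates $\alpha$ at the outset and replaces the ad hoc two-case bound with a monotonicity statement; the paper's version avoids any calculus and settles things with two lines of arithmetic. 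Both are short and both are valid; there is nothing to fix.
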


\begin{proof}
\underline{Case I.}  Suppose $r\le |y|\le 1$ and $|y|\le 3/4$.  Then
\[
\frac{3}{4}\left(\log\frac{e|y|}{r}\right)^\alpha
\ge\frac{3}{4}(\log e)^\alpha
\ge |y|^2.
\]
\underline{Case II.}  Suppose $r\leq |y|\leq 1$ and $|y|\ge 3/4$.  Then
\[
\frac{3}{4}\left(\log\frac{e|y|}{r}\right)^\alpha
\ge\frac{3}{4}(\log 3e)^\alpha \ge \frac{3}{4}2^\alpha
\ge |y|^2.
\]
\end{proof}

\begin{lem}\label{lem2.3}
 Suppose $f:(0,\infty)\to(0,\infty)$ is a continuous function
 satisfying 
$$\lim_{s\to \infty}s^\alpha f(s)=\infty \quad\text{for some 
constant $\alpha>0$}.$$ 
Then there exists a
 continuous function $\hat{f}:(0,\infty) \to(0,\infty)$ such that
 $\hat{f}\leq f$ on $(0,\infty)$, $\hat{f}=f$ on $(0,1]$, $\hat{f}$ is
   decreasing on $[1,\infty)$, and
\begin{equation}\label{eq2.1}
\lim_{s\to \infty}s^\alpha\hat{f}(s)=\infty.
\end{equation}
\end{lem}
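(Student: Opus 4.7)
The plan is to take $\hat f$ to be (essentially) the largest non-increasing minorant of $f$ on $[1,\infty)$ and derive the growth condition from a compactness argument. Concretely, I would define
\[
\hat f(s) := \begin{cases} f(s), & s\in(0,1],\\ \inf_{1\le t\le s} f(t), & s\ge 1.\end{cases}
\]
Then $\hat f\le f$ on $(0,\infty)$, $\hat f=f$ on $(0,1]$, and the (weak) decrease of $\hat f$ on $[1,\infty)$ all hold by inspection, since the infimum is over a larger set as $s$ grows. Continuity at $s=1$ is automatic (both pieces give $f(1)$), continuity on $[1,\infty)$ is a routine running-min argument using continuity of $f$, and positivity follows because the positive continuous $f$ attains a positive minimum on each compact interval $[1,s]$.

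The key step is to verify $\lim_{s\to\infty} s^\alpha \hat f(s)=\infty$. Given $M>0$, the hypothesis provides $S_M\ge 1$ with $f(t)>Mt^{-\alpha}$ for all $t\ge S_M$, and on the compact set $[1,S_M]$ the positive continuous function $f$ attains a positive minimum $m_M$. For $s\ge S_M$, splitting the infimum at $S_M$ yields
\[
\hat f(s) \;=\; \min\Bigl(m_M,\ \inf_{S_M\le t\le s} f(t)\Bigr) \;\ge\; \min\bigl(m_M,\, M s^{-\alpha}\bigr),
\]
using $f(t)\ge Mt^{-\alpha}\ge Ms^{-\alpha}$ on $[S_M,s]$. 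Once $s\ge (M/m_M)^{1/\alpha}$, the second term in the minimum dominates, so $s^\alpha \hat f(s)\ge M$; as $M$ was arbitrary, \eqref{eq2.1} follows.

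If the conclusion is required in the strict sense on $[1,\infty)$, I would replace the above $\hat f$ on $[1,\infty)$ by the product $\bigl(\tfrac12+\tfrac1{2s}\bigr)\hat f(s)$: the factor equals $1$ at $s=1$ (preserving continuity and the matching condition at $s=1$), is strictly decreasing, and has positive limit $\tfrac12$, so the product is strictly decreasing, remains $\le f$, and $s^\alpha$ times the product still diverges. There is no real obstacle beyond the growth verification; the whole argument hinges on using compactness of $[1,S_M]$ to produce the strictly positive lower bound $m_M$.
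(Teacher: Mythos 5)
Your proof is correct and takes essentially the same approach as the paper: the same running-minimum definition of $\hat f$, and the same split of the minimum at a threshold beyond which $s^\alpha f(s)\ge M$, combined with a compactness bound on the initial interval. (The aside about strict decrease is unnecessary — the paper uses ``decreasing'' in the non-increasing sense — but your proposed fix is sound.)
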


\begin{proof}
 Define $\hat{f}:(0,\infty)\to(0,\infty)$ by 
 \[
  \hat{f}(s)=
 \begin{cases}
  f(s), &\text{if } 0<s\leq 1\\
  \min_{1\leq \zeta \leq s} f(\zeta) , &\text{if }1\leq s<\infty.
 \end{cases}
 \]
Clearly $\hat{f}$ is continuous, $\hat{f}\leq f$, and $\hat{f}$ is
decreasing on $[1,\infty)$.  Let $M>1$.  Choose $s_0 >1$ such that
  $\zeta^\alpha f(\zeta)\geq M$ for $\zeta\ge s_0$. Choose $s_1>s_0$ such
  that $s_1^\alpha\hat{f}(s_0)\ge M$. Then for $s\geq s_1$ we have
\begin{align*}
 s^\alpha\hat{f}(s) &=s^\alpha \min \{ \hat{f}(s_0), \ \min_{s_0 \leq \zeta \leq s} f(\zeta) \} \\
 &\geq \min \{ s_1^\alpha \hat{f}(s_0), \ \min_{s_0 \leq \zeta \leq s} \zeta^\alpha f(\zeta) \} \geq M
\end{align*}
which proves \eqref{eq2.1}.
\end{proof}

\begin{lem}\label{lem2.4}
 Suppose $h$ is a solution of
 \begin{equation}\label{eq2.2}
  \Delta^2 h=0 \quad \text{in} \quad \overline{B_1(0)} \setminus \{0\} 
\subset \mathbb{R}^n ,\ n\geq 3.
 \end{equation}
 Then there exist constants $c_i$, $i=1,\ldots,5$, such that for
 $0<r<1$ we have 
 \begin{equation*}
  \int_{r<|x|<1} |x|^{-4} h(x)\,dx=
\begin{cases}c_1 r^{n-2} +c_2 r^{n-4} +c_3 \log r+c_4 r^{-2}+c_5 
&\text{if $n=3$ or $n\ge 5$}\\
c_1 r^{2} +c_2 \log r +c_3 (\log r)^2+c_4 r^{-2}+c_5 
&\text{if $n=4$}.
 \end{cases}
\end{equation*}
\end{lem}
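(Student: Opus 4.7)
The plan is to reduce the integral to one dimension via spherical averaging and then integrate term by term. Define the spherical mean $\bar h(\rho):=\frac{1}{|\partial B_\rho|}\int_{|x|=\rho}h(x)\,dS_x$ for $\rho\in(0,1)$. Since $\Delta^2$ commutes with rotations, extending $\bar h$ radially on $\overline{B_1(0)}\setminus\{0\}$ yields a function that still satisfies $\Delta^2\bar h=0$ on the punctured ball, and it is automatically $C^\infty$ there by elliptic regularity. Rewriting the integral in polar coordinates,
\[
\int_{r<|x|<1}|x|^{-4}h(x)\,dx=n\omega_n\int_r^1\rho^{n-5}\bar h(\rho)\,d\rho,
\]
where $\omega_n$ is the volume of the unit ball.

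Next I would identify the basis of radial solutions of $\Delta^2 u=0$. From $\Delta\rho^\alpha=\alpha(\alpha+n-2)\rho^{\alpha-2}$ one computes $\Delta^2\rho^\alpha=\alpha(\alpha-2)(\alpha+n-2)(\alpha+n-4)\rho^{\alpha-4}$, so the characteristic exponents are $\alpha\in\{0,2,2-n,4-n\}$. These are pairwise distinct for $n=3$ and for $n\ge 5$, so the space of radial biharmonic functions is spanned by $\{1,\rho^2,\rho^{2-n},\rho^{4-n}\}$. For $n=4$ the root $\alpha=0$ has multiplicity two, and the standard reduction-of-order argument replaces one copy of the constant by $\log\rho$, giving the basis $\{1,\rho^2,\rho^{-2},\log\rho\}$. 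Hence $\bar h(\rho)$ is a linear combination with four constants $a_0,a_1,a_2,a_3$ of one of these two bases.

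The final step is termwise integration of $\rho^{n-5}\bar h(\rho)$ against $d\rho$ over $[r,1]$. For $n=3$ or $n\ge 5$, each of the four basis terms contributes a primitive proportional to one of $\rho^{n-4},\rho^{n-2},\rho^{-2},\log\rho$ (the case $n=3$ requiring care because $n-5=-2$ makes the constant part of $\bar h$ integrate to $\rho^{-1}=\rho^{n-4}$), and evaluating at the endpoints yields $c_5+c_1 r^{n-2}+c_2 r^{n-4}+c_3\log r+c_4 r^{-2}$. For $n=4$ the integrand becomes $a_0\rho^{-1}+a_1\rho+a_2\rho^{-3}+a_3\rho^{-1}\log\rho$, whose primitives are $a_0\log\rho,\tfrac{a_1}{2}\rho^2,-\tfrac{a_2}{2}\rho^{-2},\tfrac{a_3}{2}(\log\rho)^2$, yielding $c_5+c_1 r^2+c_2\log r+c_3(\log r)^2+c_4 r^{-2}$. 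The main (and really only) obstacle is keeping track of the exceptional cases: $n=4$, where the repeated indicial root forces the logarithmic basis element and in turn the $(\log r)^2$ term in the conclusion, and the mild coincidence at $n=3$ described above; otherwise the computation is routine bookkeeping.
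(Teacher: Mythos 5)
Your argument is correct and is essentially the paper's own proof: both pass to the spherical mean $\bar h(\rho)$, use that it is a radial biharmonic function on the punctured ball and hence a combination of $1,\rho^2,\rho^{2-n},\rho^{4-n}$ (with $\log\rho$ appearing when $n=4$), and then integrate $\rho^{n-5}\bar h(\rho)$ termwise over $[r,1]$. Your extra remarks on the repeated indicial root at $n=4$ and the coincidence at $n=3$ are exactly the bookkeeping implicit in the paper's computation.
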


\begin{proof}
 It follows from \eqref{eq2.2} that there exist constants $\hat{c}_i ,
 i=1,2,3,4$, such that for $0<\rho <1$ we have
 \[
\bar{h}(\rho):=\frac{1}{|\partial B_1|\rho^{n-1}} \int_{|x|=\rho}
 h(x)\,dS_x 
= 
\begin{cases}\hat{c}_1 \rho^2 + \hat{c}_2 + \hat{c}_3 \rho^{4-n} +
  \hat{c}_4 \rho^{2-n} &\text{if $n=3$ or $n\ge 5$}\\
\hat{c}_1 \rho^2 + \hat{c}_2 + \hat{c}_3 \log \rho + \hat{c}_4 \rho^{-2}
&\text{if $n=4$}.
\end{cases}
\]
 Thus
 \begin{align*}
 \int_{r<|x|<1} |x|^{-4} h(x)\,dx&= \int^{1}_{r} \rho^{-4} 
\left( \int_{|x|=\rho} h(x)\,dS_x \right) \,d\rho
=|\partial B_1|\int^{1}_{r} \rho^{n-5} \bar{h} (\rho) \,d\rho \\
  &=
\begin{cases}|\partial B_1|\int^{1}_{r} 
(\hat{c}_1 \rho^{n-3} + \hat{c}_2 \rho^{n-5} + 
\hat{c}_3 \rho^{-1} + \hat{c}_4 \rho^{-3}) \,d\rho&\text{if $n=3$ or $n\ge 5$}\\
|\partial B_1|\int^{1}_{r} 
(\hat{c}_1 \rho + \hat{c}_2 \rho^{-1} + 
\hat{c}_3 \rho^{-1}\log \rho + \hat{c}_4 \rho^{-3}) \,d\rho&\text{if $n=4$}
\end{cases}
\end{align*}
 from which we obtain Lemma \ref{lem2.4}.
\end{proof}

\begin{lem}\label{lem2.5}
Suppose $v\in L^1_{loc}(B)$, where $B=B_1(0)\subset \mathbb R^n$, $n\ge 2$.
If
\[
\Delta^2 v=0 \quad\text{in } {\cal D\,}' (B \setminus
\{0\})
\]
and 
\begin{equation}\label{eq2.3}
 \int_{|x|<r} |v(x)|\,dx=o(r^3)\quad\text{as}\quad r\to 0^{+} 
\end{equation}
then for some constant $a$ and some $C^\infty$ solution H of $\Delta^2
H=0$ in $B$ we have
\[
 v=a\Phi+H \quad \text{in}\quad B\setminus \{0\}
\]
where $\Phi$ is given by \eqref{eq1.10}--\eqref{eq1.13}.
\end{lem}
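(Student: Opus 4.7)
The plan is to interpret $\Delta^2 v$ as a distribution on all of $B$ that is supported at the origin, invert $\Delta^2$ against the fundamental solution to obtain a candidate representation, and then use the growth hypothesis to eliminate every singular contribution except the one proportional to $\Phi$.

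First I would observe that, by ellipticity of $\Delta^2$, the hypothesis $\Delta^2 v=0$ in $\mathcal{D}'(B\setminus\{0\})$ forces $v\in C^\infty(B\setminus\{0\})$. Since $v\in L^1_{loc}(B)$, the distribution $T:=\Delta^2 v$ is defined on all of $B$ and is supported at $\{0\}$. The classical structure theorem for distributions supported at a single point then gives
\[
\Delta^2 v=\sum_{|\alpha|\le N}c_\alpha D^\alpha\delta_0
\]
for some integer $N\ge 0$ and constants $c_\alpha$. Letting $E$ be the fundamental solution of $\Delta^2$ normalized so that $\Delta^2 E=\delta_0$ (a scalar multiple of $\Phi$), the difference $H:=v-\sum_\alpha c_\alpha D^\alpha E$ satisfies $\Delta^2 H=0$ in $\mathcal{D}'(B)$, and hypoellipticity of $\Delta^2$ yields $H\in C^\infty(B)$. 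What remains is to show $c_\alpha=0$ for every $|\alpha|\ge 1$.

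To this end I would probe $T$ against polynomial-cutoff test functions. Fix $\phi\in C_c^\infty(\mathbb{R}^n)$ with $\phi\equiv 1$ on $\{|x|\le 1/2\}$ and $\operatorname{supp}\phi\subset\{|x|\le 1\}$, and for each multi-index $\beta$ and $r\in(0,1/2)$ set
\[
\psi_{r,\beta}(x):=\phi(x/r)\,x^\beta \in C_c^\infty(B).
\]
Since $\psi_{r,\beta}$ coincides with $x^\beta$ near $0$, the distributional pairing gives
\[
\langle\Delta^2 v,\psi_{r,\beta}\rangle=\sum_\alpha (-1)^{|\alpha|}c_\alpha D^\alpha(x^\beta)\big|_{x=0}=(-1)^{|\beta|}\beta!\,c_\beta,
\]
independently of $r$. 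Expanding $\Delta^2(\phi(x/r)x^\beta)$ by the Leibniz rule, and using $|D^k\phi(x/r)|\le Cr^{-k}$ together with $|D^j x^\beta|\le C|x|^{|\beta|-j}\le Cr^{|\beta|-j}$ on $\operatorname{supp}\psi_{r,\beta}$, every term distributes exactly four derivatives, so that $|\Delta^2\psi_{r,\beta}(x)|\le Cr^{|\beta|-4}$ throughout $B_r(0)$. Consequently,
\[
|\beta!\,c_\beta|=\left|\int v\,\Delta^2\psi_{r,\beta}\,dx\right|\le Cr^{|\beta|-4}\int_{|x|<r}|v|\,dx=o(r^{|\beta|-1})\quad\text{as }r\to 0^+,
\]
by the hypothesis $\int_{|x|<r}|v|\,dx=o(r^3)$. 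For every $|\beta|\ge 1$ the right-hand side tends to $0$, whereas $c_\beta$ is independent of $r$; hence $c_\beta=0$. Only $c_0$ survives, giving $v=c_0 E+H=a\Phi+H$ with $a=c_0/\gamma$ absorbing the normalization.

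The one nontrivial technical step is the uniform bound $|\Delta^2\psi_{r,\beta}|\le Cr^{|\beta|-4}$ on $B_r(0)$, which is what makes the calibration of the hypothesis exactly sharp: $o(r^3)$ is just strong enough to annihilate every $D^\beta\delta_0$ with $|\beta|\ge 1$, yet weak enough to permit the pure $\delta_0$ term that produces the fundamental-solution contribution $a\Phi$ in the final representation.
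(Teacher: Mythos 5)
Your proof is correct, and it takes the same overall strategy as the paper: reduce $\Delta^2 v$ to a distribution supported at the origin, invoke the structure theorem to write $\Delta^2 v = \sum_{|\alpha|\le N} c_\alpha D^\alpha\delta_0$, and then use the $o(r^3)$ hypothesis together with the scaling $\Delta^2$ of order $4$ to annihilate every coefficient with $|\alpha|\ge 1$. Where you differ is purely in the choice of probe. The paper follows Brezis--Lions: it fixes one $\varphi\in C_c^\infty(B)$ with $(-1)^{|\beta|}(D^\beta\varphi)(0)=a_\beta$, pairs against the dilates $\varphi_\varepsilon(x)=\varphi(x/\varepsilon)$, and obtains $\sum_\beta a_\beta^2\varepsilon^{-|\beta|}=\int v\,\Delta^2\varphi_\varepsilon=o(\varepsilon^{-1})$, which kills all $a_\beta$ with $|\beta|\ge 1$ simultaneously. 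You instead isolate each coefficient with the cutoff monomial $\psi_{r,\beta}(x)=\phi(x/r)\,x^\beta$, using that $\psi_{r,\beta}\equiv x^\beta$ near $0$ to get $\langle\Delta^2 v,\psi_{r,\beta}\rangle=(-1)^{|\beta|}\beta!\,c_\beta$, then the Leibniz estimate $|\Delta^2\psi_{r,\beta}|\le Cr^{|\beta|-4}$ gives $c_\beta=o(r^{|\beta|-1})$, hence $c_\beta=0$ for $|\beta|\ge 1$. Your version is slightly more transparent in that it displays the exact power of $r$ controlling each coefficient, making explicit why $o(r^3)$ is the sharp calibration; the paper's version is slightly slicker in dispatching all coefficients with a single test function. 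Both are valid, and your final assembly (subtracting $c_0 E$, invoking hypoellipticity of $\Delta^2$ to get $H\in C^\infty(B)$, and rescaling $E$ to $\Phi$) matches the paper's.
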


\begin{proof}
Since the support of $\Delta^2 v$ is a single point we have
$\Delta^2 v$ is a finite linear combination of the
delta function and its derivatives:
$$\Delta^2 v=\sum_{|\beta|\leq k} a_\beta D^\beta \delta \quad
\text{in}\quad {\cal D\,}' (B).$$
We now use a method of Brezis and Lions \cite{BL} to show $a_\beta
 =0$ for $|\beta|\geq 1$.  Choose $\varphi \in C^{\infty}_{0}
 (B)$ such that $(-1)^{|\beta|} (D^\beta \varphi)(0)=a_\beta$ for
 $|\beta|\leq k$.  Let $\varphi_\vp
 (x)=\varphi(\frac{x}{\vp})$.  Then, for $0<\vp <1$,
 $\varphi_\vp \in C^{\infty}_{0} (B)$, and
\begin{align*}
 \int v \Delta^2 \varphi_\vp &=(\Delta^2 v)(\varphi_\vp)
=\sum_{|\beta|\leq k} a_\beta (D^\beta \delta)\varphi_\vp \\
 &=\sum_{|\beta|\leq k} a_\beta (-1)^{|\beta|} \delta(D^\beta
 \varphi_\vp)
=\sum_{|\beta|\leq k} a_\beta (-1)^{|\beta|} (D^\beta \varphi_\vp)(0)\\
 &=\sum_{|\beta|\leq k} a_\beta (-1)^{|\beta|}\frac{1}{\vp^{|\beta|}} 
(D^\beta \varphi)(0)=\sum_{|\beta|\leq k} a^{2}_{\beta}\frac{1}{\vp^{|\beta|}} .
\end{align*}
On the other hand,
\begin{align*}
 \int v \Delta^2 \varphi_\vp 
 &=\int v(x)\frac{1}{\vp^4}\Delta^2 \varphi\left(\frac{x}{\vp}\right)\,dx\\
 &\leq\frac{C}{\vp^4} \int_{|x|<\vp}
 |v(x)|\,dx=o(\vp^{-1}) 
\quad \text{as}\quad \vp \to 0^{+} 
\end{align*}
by \eqref{eq2.3}.  Hence $a_\beta =0$ for $|\beta|\geq 1$ and
consequently, letting $a=a_0$, we have $\Delta^2 (v-a\Phi)=0$ in
${\cal D\,}'(B)$.  Thus the lemma follows from the fact that weakly
biharmonic functions are $C^\infty$. 
\end{proof}

\begin{lem}\label{lem2.6}
Suppose $u(x)$ is a $C^4$ positive solution of
\begin{equation}\label{2.4}
-\Delta^2 u\ge \alpha |x|^\frac{2\sigma-4n}{n-2} u(x)^\lambda \quad
\text{in}\quad \overline{B_1(0)}\setminus\{0\}
\subset\mathbb R^n,\ n\ge 3,
\end{equation}
where $\alpha>0$ and $\sigma<4$ are constants and
$\lambda=1+\frac{4-\sigma}{n-2}$. Then 
\begin{equation}\label{eq2.5}
\liminf_{r\to 0^+}\frac{\bar u(r)}{r^2J(r)}=0
\end{equation}
where $\bar u(r)$ is the average of $u$ on the sphere $|x|=r$ and 
\begin{equation*}
 J(r):= \int_{r<|y|<1} |y|^{2-n} (-\Delta^2 u(y))\,dy 
\quad \text{for}\quad 0<r<1.
\end{equation*}
\end{lem}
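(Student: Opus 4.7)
The plan is to argue by contradiction. Suppose that $\liminf_{r\to 0^+}\bar u(r)/(r^2J(r))>0$, so there exist constants $c_0>0$ and $r_0\in(0,1)$ with $\bar u(r)\ge c_0 r^2 J(r)$ for every $r\in(0,r_0)$. The goal is to turn this hypothesis, together with the differential inequality \eqref{2.4}, into a first-order ODE-type inequality for $J$ on $(0,r_0)$ whose only compatible integrated outcome is the boundedness of $J(r)^{1-\lambda}$, in conflict with an estimate that diverges logarithmically at $0$.

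The first step is to take the spherical average of \eqref{2.4}. Since $\sigma<4$ gives $\lambda=1+\frac{4-\sigma}{n-2}>1$, the map $s\mapsto s^\lambda$ is convex and Jensen's inequality yields $\overline{u^\lambda}(r)\ge\bar u(r)^\lambda$. Inserting the assumed lower bound on $\bar u$ then produces
$$\overline{(-\Delta^2 u)}(r)\ge \alpha c_0^\lambda\, r^{(2\sigma-4n)/(n-2)+2\lambda}\, J(r)^\lambda.$$
The key arithmetic identity, which I would verify by direct substitution of $\lambda=1+\frac{4-\sigma}{n-2}$, is that
$$\frac{2\sigma-4n}{n-2}+2\lambda=-2;$$
this is the precise reason the exponents in \eqref{2.4} are critical for the question at hand, and it collapses the power of $r$ on the right to $r^{-2}$. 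Rewriting $J$ as a one-variable integral in the radial variable gives the identity $-J'(r)=|\partial B_1|\,r\,\overline{(-\Delta^2 u)}(r)$, and combining this with the bound just obtained produces
$$-J'(r)\ge\frac{C_1}{r}\,J(r)^\lambda,\qquad 0<r<r_0,$$
for some $C_1>0$ depending on $\alpha,c_0,n$. Note that $J(r)>0$ throughout $(0,1)$, because the inequality \eqref{2.4} with $\alpha>0$ and $u>0$ forces $-\Delta^2 u>0$ pointwise.

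For the final step, divide the displayed inequality by $J(r)^\lambda$ and integrate from $r$ to $r_0$. Since $J$ is decreasing (being an integral from $r$ to $1$ of a positive function) and $\lambda>1$, the left side equals $(J(r_0)^{1-\lambda}-J(r)^{1-\lambda})/(\lambda-1)$, which is nonnegative and bounded above by the fixed constant $J(r_0)^{1-\lambda}/(\lambda-1)$. But the right side equals $C_1\log(r_0/r)$, which tends to $+\infty$ as $r\to 0^+$; contradiction. The only step with any subtlety is the algebraic cancellation $\frac{2\sigma-4n}{n-2}+2\lambda=-2$ together with the verification that Jensen's inequality applies (which rests on $\sigma<4$, hence $\lambda>1$); everything else is one-variable calculus and monotonicity of $J$.
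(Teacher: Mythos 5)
Your proposal is correct and follows essentially the same route as the paper: assume $\bar u(r)\ge c_0 r^2 J(r)$ for small $r$, average \eqref{2.4} (Jensen, since $\lambda>1$), use $-J'(r)=Cr\,\overline{(-\Delta^2 u)}(r)$ and the exponent cancellation to get $-J'\ge C r^{-1}J^\lambda$, and integrate to contradict the divergence of $\log(r_0/r)$. The only difference is cosmetic: you make the use of Jensen's inequality and the positivity of $J$ explicit, which the paper leaves implicit.
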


\begin{proof}
Suppose for contradiction that there exists $\vp,r_0\in(0,1)$ such
that $\bar u(r)\ge \vp r^2 J(r)$ for $0<r\le r_0$. Then, letting $C$
denote a positive constant whose value may change from line to line,
we have for $0<r\le r_0$ that
\begin{align*}
 -J'(r)&= \int_{|y|=r} |y|^{2-n} (-\Delta^2 u(y))\,dS_y\\
 &=r^{2-n} |\partial B_1| r^{n-1} (-\overline{\Delta^2 u}(r))\\
 &=C r(-\overline{\Delta^2 u}(r))\\
 &\geq Cr\left(r^{\frac{2\sigma-4n}{n-2}} (\bar{u}(r))^{\lambda}\right)\\
 &\geq C r^{1+\frac{2\sigma-4n}{n-2}} (\vp r^2 J(r))^\lambda \\
 &=Cr^{1+\frac{2\sigma-4n}{n-2}+2\lambda} J(r)^\lambda \\
 &=Cr^{-1} J(r)^\lambda .
\end{align*}
Consequently  $-J'(r)J(r)^{-\lambda} \geq
Cr^{-1}$ for $0<r\leq r_0$ which implies
\begin{equation*}
 \frac{1}{(\lambda-1)J(r_0)^{\lambda-1}} \geq \frac{1}{\lambda-1} \left[ \frac{1}{J(r_0)^{\lambda-1}}-\frac{1}{J(r)^{\lambda-1}} \right] \geq C\log\frac{r_0}{r} \to \infty
\end{equation*}
as $r\to 0^{+}$, a contradiction, which proves the lemma.
\end{proof}

\begin{lem}\label{lem2.7}
Suppose $u$ is a $C^4$ positive solution of 
\[
-\Delta^2 u>0 \quad \text{in}\quad
B_2(0)\setminus\{0\}\subset\mathbb{R}^n,\quad n\ge 2,
\]
such that
\begin{equation}\label{eq2.110}
\int_{|x|<1}\varphi(x)u(x)\,dx<\infty
\end{equation}
where $\varphi:\overline{B_1(0)}\setminus\{0\}\to (0,\infty)$ is a
continuous radial function satisfying 
\begin{equation}\label{eq2.115}
\int_{|x|<1}\varphi(x)\,dx=\infty.
\end{equation}
Then
\begin{equation}\label{eq2.120}
\int_{|x|<1}-\Delta^2 u(x)\,dx<\infty.
\end{equation}
\end{lem}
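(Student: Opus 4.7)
The plan is to prove the contrapositive: assume $I:=\int_{|x|<1}(-\Delta^2 u)\,dx=+\infty$ and derive a contradiction with \eqref{eq2.110}. Throughout, write $\bar u(r)$ for the spherical average of $u$ on $|x|=r$, and similarly $\psi(r)=\overline{\Delta u}(r)$, $\bar f(r)=\overline{-\Delta^2 u}(r)>0$.

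A key preliminary step is the observation that \eqref{eq2.110} together with \eqref{eq2.115} forces $\liminf_{r\to 0^+}\bar u(r)=0$. Indeed, if $\bar u\geq\delta>0$ on some $(0,r_0)$, then, using that $\varphi$ is radial,
\[
 \int_{|x|<r_0}\varphi u\,dx=|\partial B_1|\int_0^{r_0}\varphi(r)r^{n-1}\bar u(r)\,dr\geq\delta\int_{|x|<r_0}\varphi\,dx=\infty,
\]
since $\varphi$ is continuous on $(0,1]$ and \eqref{eq2.115} forces $\varphi(r)r^{n-1}$ to be nonintegrable at $0$; this contradicts \eqref{eq2.110}.

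Next I would set $K(r):=r^{n-1}\psi'(r)$. The standard spherical-average identity $(r^{n-1}\psi')'=-r^{n-1}\bar f$ shows $K$ is strictly decreasing and $I=|\partial B_1|(K(0^+)-K(1))$, so $I=\infty$ is equivalent to $K(0^+)=+\infty$. Picking $r_A$ with $K(r)\geq A$ on $(0,r_A]$ gives $\psi'(r)\geq A r^{1-n}$; one more integration, using $\int_0 s^{1-n}\,ds=\infty$ for every $n\geq 2$, yields $\psi(r)\to-\infty$ as $r\to 0^+$. In particular $\psi<0$ on some $(0,r_1]$.

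Now let $P(r):=r^{n-1}\bar u'(r)$, so $P'(r)=r^{n-1}\psi(r)<0$ on $(0,r_1]$. Then $P$ is strictly decreasing on $(0,r_1]$, and $P(0^+):=\lim_{r\to 0^+}P(r)$ exists in $(-\infty,+\infty]$ with $P(r)<P(0^+)$ for every $r\in(0,r_1]$. I then split into two cases. If $P(0^+)>0$ (including $+\infty$), pick $c>0$ and $r_2\in(0,r_1]$ with $P(r)\geq c$ on $(0,r_2]$; then $\bar u'(r)\geq c r^{1-n}$, and integration from $r$ to $r_2$ (again exploiting $\int_0 s^{1-n}\,ds=\infty$) gives $\bar u(r_2)-\bar u(r)\to+\infty$ as $r\to 0^+$, forcing $\bar u(r)\to-\infty$ and contradicting $u>0$. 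If instead $P(0^+)\leq 0$, the strict inequality $P(r)<P(0^+)\leq 0$ gives $\bar u'(r)<0$ on $(0,r_1)$, so $\bar u$ is strictly decreasing in $r$ and $\bar u(r)>\bar u(r_1)>0$ on $(0,r_1)$; hence $\liminf_{r\to 0^+}\bar u(r)\geq\bar u(r_1)>0$, contradicting the preliminary observation. The main potential obstacle is keeping track of the signs through the case split and handling the $n=2$ case, where the divergent integrals become logarithmic rather than algebraic; the argument is otherwise elementary ODE analysis applied to the radial averages.
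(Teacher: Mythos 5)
Your proof is correct and follows essentially the same route as the paper: assuming $\int_{|x|<1}(-\Delta^2 u)\,dx=\infty$, you integrate the radial ODE for the spherical averages to conclude $\Delta\bar u(r)<0$ (indeed $\to-\infty$) near the origin, and then use positivity to force $\bar u$ to stay above a positive constant, contradicting \eqref{eq2.110} and \eqref{eq2.115}. The only difference is presentational: you make explicit, via the monotone quantity $r^{n-1}\bar u'(r)$ and the dichotomy on its limit, the step the paper states tersely as ``the positivity of $\bar u$ implies $\bar u>\vp>0$ for small $r$.''
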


\begin{proof}
Let $F(\rho)=-\overline{\Delta^2u}(\rho)=-\Delta^2\bar{u}(\rho)$.
Then for some constants $c_1$and $c_2$ we have for $0<r\le 1$ that
\[
\Delta \bar u(r)=
\begin{cases} c_1+c_2\log\frac1r-(NF)(r)&\text{if $n=2$}\\
c_1+c_2r^{2-n}-(NF)(r)<0&\text{if $n\ge 3$}
\end{cases}
\]
where $(NF)(r)=\int_r^1s^{-(n-1)}\int_s^1\rho^{n-1} F(\rho)\,d\rho\,ds$.

Suppose for contradiction that \eqref{eq2.120} is false. Then
$\int_0^1\rho^{n-1} F(\rho)\,d\rho=\infty$ and hence as $r\to 0^+$ we have
\[
(NF)(r)>>\begin{cases}\log\frac1r& \text{if $n=2$}\\
r^{2-n} & \text{if $n\ge 3$.}
\end{cases}
\]
Thus for small $r>0$ we have $\Delta \bar u(r)<0$.
Hence the positivity of $\bar u$ implies $\bar u>\vp>0$ for small
$r>0$, which together with \eqref{eq2.110} and \eqref{eq2.115} 
gives a contradiction and
completes the proof of Lemma \ref{lem2.7}.
\end{proof}

\begin{lem}\label{lem2.8}
 There does not exist a positive \emph{radial} solution of
 \begin{equation}\label{eq2.13}
  -\Delta^2 v\ge |y|^{-\sigma}f(v) \quad\text{in}\quad \mathbb{R}^2\setminus B_{r_{0/2}} (0)
 \end{equation}
 where $r_0\geq 2$ and $\sigma\in[0,2)$ are constants
 and $f:(0,\infty) \to (0,\infty)$ is a
 continuous function such that
 \begin{equation}\label{eq2.14}
  \liminf_{s \to \infty} \frac{f(s)}{s^{-1+\frac{\sigma}{2}}} > 0.
 \end{equation}
\end{lem}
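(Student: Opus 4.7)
My plan is to set $t=\log r$, $V(t)=v(e^t)$ and $\Psi(t)=(\Delta v)(e^t)$, reducing \eqref{eq2.13} to an ODE inequality in $t$. Using the radial Laplacian in $\mathbb{R}^2$ I compute $V_{tt}=e^{2t}\Psi(t)$ and $\Psi_{tt}(t)=e^{2t}(\Delta^2 v)(e^t)$, so \eqref{eq2.13} becomes
\[
\Psi_{tt}(t)\le -e^{(2-\sigma)t}f(V(t)),
\]
and $\Psi$ is strictly concave. Integrating once, my key first observation is
\[
I:=\int_{t_0}^{\infty}e^{(2-\sigma)s}f(V(s))\,ds<\infty,
\]
since otherwise $\Psi_t\to -\infty$, hence $\Psi\to -\infty$ at least linearly, $V_{tt}=e^{2t}\Psi\to -\infty$ super-exponentially, and iterating yields $V\to -\infty$, contradicting $V>0$. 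A parallel argument rules out $\Psi_t^{\infty}:=\lim_{t\to\infty}\Psi_t(t)<0$, so $\Psi_t^{\infty}\ge 0$; strict monotonicity of $\Psi_t$ then gives $\Psi_t>0$, so $\Psi$ is strictly increasing with at most linear growth $\Psi(t)\le Ct$ for large $t$.

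Next I split into cases according to the sign of $\Psi$ at infinity. If $\Psi$ is eventually positive, then $V$ is eventually convex and integrating $V_{tt}=e^{2t}\Psi\le Cte^{2t}$ twice yields $V(t)\le Cte^{2t}$, i.e.\ $v(r)\le Cr^2\log r$ for large $r$, while $V\to\infty$. Hypothesis \eqref{eq2.14} then provides $f(V(t))\ge\eta V(t)^{-1+\sigma/2}$ for large $t$, hence
\[
e^{(2-\sigma)t}f(V(t))\ge C' t^{-1+\sigma/2},
\]
which is non-integrable on $[t_0,\infty)$ because $-1+\sigma/2\ge -1$ when $\sigma\ge 0$; this contradicts $I<\infty$.

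The remaining case $\Psi<0$ throughout (i.e.\ $\Psi_\infty\le 0$) is the main obstacle, since then $V$ is concave in $t$ and I lose the easy polynomial upper bound on $V$ via $\Psi$. I would use that $V_t$ is strictly decreasing with finite limit $V_t^\infty\ge 0$ (else $V\to -\infty$), so $V_t>0$, $V$ is strictly increasing, and $V(t)\ge V(t_0)>0$. If $V$ is bounded, continuity and positivity of $f$ on the compact interval $[V(t_0),\sup V]\subset(0,\infty)$ gives $f(V)\ge\delta>0$, so $I\ge\delta\int^{\infty}e^{(2-\sigma)s}\,ds=\infty$. If $V\to\infty$, the bound $V(t)\le Ct$ (from $V_t\le V_t(t_0)$) together with \eqref{eq2.14} gives $e^{(2-\sigma)t}f(V(t))\ge C''e^{(2-\sigma)t}t^{-1+\sigma/2}$, again non-integrable. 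Either way $I<\infty$ is contradicted, completing the proof.
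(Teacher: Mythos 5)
Your proof is correct, and it follows a genuinely different route from the paper's. The paper works directly in the radial variable $r$ with the iterated integral operators $N$, $\hat N$, $\hat{\hat N}$ applied to $F=-\Delta^2 v$: it first shows $\int_{r_0}^\infty \rho F(\rho)\,d\rho<\infty$, then establishes $\liminf_{r\to\infty}v(r)=0$, and finally massages three successive representation formulae for $v$, using positivity to force the coefficients of $r^2\log r$, $r^2$, and then $-\Delta v$ into signs that are incompatible with $\liminf v=0$. You instead pass to the Emden--Fowler variable $t=\log r$ and reduce everything to a pair of second-order inequalities for $V=v(e^t)$ and $\Psi=\Delta v(e^t)$; the strict concavity of $\Psi$ does the bookkeeping that the paper accomplishes with its chain of operators. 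Your finiteness claim $\int e^{(2-\sigma)s}f(V(s))\,ds<\infty$ is the exact analogue of the paper's $\int \rho F\,d\rho<\infty$, and after that the arguments genuinely diverge: the paper keeps integrating and uses $\liminf v=0$, while you split on the sign of $\Psi$ at infinity and in each branch pin $V$ below a polynomial-in-$t$ bound (either $Cte^{2t}$ or $Ct$) that, combined with hypothesis \eqref{eq2.14}, forces the integral to diverge. The change of variables buys you a cleaner dichotomy and avoids any need to establish $\liminf v=0$ as a separate lemma; the paper's formulation has the advantage of working uniformly with the notation already set up for the representation formula in Appendix \ref{secA}. Two small remarks for completeness: in your Case~I you should note explicitly that once $\Psi$ is eventually $\ge\delta>0$, integrating $V_{tt}\ge\delta e^{2t}$ yields $V_t\to+\infty$ and hence $V\to\infty$, so \eqref{eq2.14} applies; and it is worth saying that $\Psi_t$ has a (finite or infinite) limit because it is monotone, which is what licenses the discussion of $\Psi_t^\infty$.
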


\begin{proof}
  Suppose for contradiction that $v(r)$ is a positive radial solution
  of \eqref{eq2.13}.  Let $F(r)=-(\Delta^2 v)(r)$ and $(NF)(r)=
  \int^{r}_{r_0} \frac{1}{s} \int^{s}_{r_0} \rho F(\rho)\, d\rho\,ds.$ Then
  for some constants $c_1,...,c_4$ we have
 \begin{equation}\label{eq2.15}
  v(r)=c_1 + c_2 \log r + c_3 r^2 + c_4 r^2 \log r-(N^2 F)(r).
 \end{equation}
 If $\int^{s}_{r_0} \rho F(\rho)\,d\rho \to \infty$ as $s\to \infty$
 then $(NF)(r) >> \log r$ as $r\to \infty$ and hence $(N^2 F)(r)>>r^2
 \log r$ as $r\to \infty$ which together with \eqref{eq2.15}
 contradicts the positivity of $v$.  Thus
 \begin{equation}\label{eq2.6}
  \int^{\infty}_{r_0} \rho F(\rho)\,d\rho < \infty.
 \end{equation}
 We claim that
 \begin{equation}\label{eq2.7}
  \liminf_{r\to \infty} v(r)=0.
 \end{equation}
 To see this, suppose for contradiction that \eqref{eq2.7} is false.
 Then for some $\vp >0$ we have $v(r)>\vp$ for $r_0 \leq
 r<\infty$.  Thus by \eqref{eq2.13}, \eqref{eq2.14} and \eqref{eq2.15} we
 have 
\begin{align*}
F(r)&=-\Delta^2 v(r)\geq r^{-\sigma}f(v(r)) 
\geq \frac{1}{Cr^\sigma v(r)^{1-\frac{\sigma}{2}}}\\
 &\geq \frac{1}{Cr^\sigma (r^2\log r)^{1-\frac{\sigma}{2}}}
\ge \frac{1}{Cr^2\log r} \quad\text{for $r$ large}
\end{align*}
which contradicts \eqref{eq2.6} and proves \eqref{eq2.7}.

 By \eqref{eq2.6}, 
\[
(\hat{N}F)(r):=\int^{r}_{r_0} \frac{1}{s}
 \int^{\infty}_{s} \rho F(\rho)\, d\rho\,ds =o(\log r) 
\quad\text{as}\quad r\to \infty
\]
and thus
 \begin{equation}\label{eq2.8}
  (N\hat{N}F)(r)=o(r^2 \log r) \quad\text{as}\quad r\to \infty.
 \end{equation}
 Since $v(r)$ solves \eqref{eq2.13}, there exist constants
 $\hat{c_1},...,\hat{c_4}$ such that
 \begin{equation}\label{eq2.9}
   v(r)=\hat{c}_1+ \hat{c}_2 \log r+\hat{c}_3 r^2 
+\hat{c}_4 r^2 \log r+(N\hat{N}F)(r).
 \end{equation}
 Since $v>0$, it follows from \eqref{eq2.8} and \eqref{eq2.9} that
 \begin{equation}\label{eq2.10}
  \hat{c}_4 \geq 0.
 \end{equation}
 If $(\hat{N}F)(r) \to \infty$ then $(N\hat{N}F)(r)>>r^2$ as $r\to
 \infty$ which together with \eqref{eq2.9} and \eqref{eq2.10} implies
 $v(r)\to \infty$ as $r\to\infty$ which contradicts \eqref{eq2.7}.  Hence
 $(\hat{N}F)(r)$ is bounded.  Thus
 \[
(\hat{\hat{N}}F)(r):=\int^{\infty}_{r} \frac{1}{s} \int^{\infty}_{s}
 \rho F(\rho)\,d\rho\,ds=o(1) \quad\text{as } r\to \infty
\] 
which implies
 \begin{equation}\label{eq2.11}
  (N\hat{\hat{N}}F)(r)=o(r^2) \quad\text{as}\quad r\to \infty.
 \end{equation}

 Since $v$ solves \eqref{eq2.13} there exist constants $\hat{\hat{c}}_1,
 ..., \hat{\hat{c}}_4$ such that
 \begin{equation}\label{eq2.12}
  v(r)=\hat{\hat{c}}_1 +\hat{\hat{c}}_2 \log r+\hat{\hat{c}}_3 r^2 
+\hat{\hat{c}}_4 r^2 \log r-(N\hat{\hat{N}}F)(r).
 \end{equation}

 By \eqref{eq2.11} and \eqref{eq2.12} and the positivity of $v$ we have
 $\hat{\hat{c}}_4 \geq 0$ and then by \eqref{eq2.7}, $\hat{\hat{c}}_4
 =0$.  Hence by \eqref{eq2.11} and \eqref{eq2.12} and the positivity of
 $v$ we have $\hat{\hat{c}}_3 \geq 0$ and then by \eqref{eq2.7},
 $\hat{\hat{c}}_3 =0$.  Thus by \eqref{eq2.12} 
\[
v(r)=\hat{\hat{c}}_1
 +\hat{\hat{c}}_2 \log r-(N\hat{\hat{N}}F)(r) 
\]
and so $-\Delta
 v=\hat{\hat{N}}F>0$ which together with the positivity of $v$
 contradicts \eqref{eq2.7} and completes the proof of Lemma \ref{lem2.8}.
\end{proof}

\begin{lem}\label{lem2.9}
  Suppose $x,y \in \mathbb{R}^2$ and $y\neq 0$.  Then
  \[
I(x,y):=\int^{1}_{0} (1-t) \log
  \frac{|y|}{|y-tx|}\, dt\leq 2 \int^{1}_{0} \log
  \frac{1}{s}\,ds< \infty.
\]
\end{lem}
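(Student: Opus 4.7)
The plan is to reduce this two-dimensional integral to a one-parameter problem via the reverse triangle inequality. Since $|y - tx| \ge \bigl||y| - t|x|\bigr|$ for every $t \in [0,1]$, we obtain immediately
\[
I(x,y) \;\le\; \int_0^1 (1-t) \log\frac{|y|}{\bigl||y|-t|x|\bigr|}\,dt \;=\; J(\beta), \qquad \beta := |x|/|y| \ge 0,
\]
where $J(\beta) := \int_0^1 (1-t) \log\frac{1}{|1 - t\beta|}\,dt$. It thus suffices to show that $J(\beta) \le 2 \int_0^1 \log(1/s)\,ds = 2$ uniformly in $\beta \ge 0$.

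To evaluate $J(\beta)$ I would split according to whether $\beta \le 1$ or $\beta > 1$ (in the latter case further breaking the integral at $t = 1/\beta$, where $1 - t\beta$ changes sign). In each case the substitution $u = |1 - t\beta|$ makes $t$ an affine function of $u$ and transforms the integrand into an affine-in-$u$ multiple of $\log(1/u)$; one integration by parts then yields the closed form
\[
J(\beta) = \begin{cases} \dfrac{3}{4} - \dfrac{1}{2\beta} - \dfrac{(1-\beta)^2 \log(1-\beta)}{2\beta^2}, & 0 < \beta \le 1, \\ \dfrac{(\beta-1)^2}{2\beta^2}\log\dfrac{1}{\beta-1} + \dfrac{3}{4} - \dfrac{1}{2\beta}, & \beta > 1. \end{cases}
\]
Both expressions extend continuously to $\beta = 1$ with the common value $J(1) = 1/4$.

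A short derivative computation shows that $J$ is monotone on $(0,1]$ (rising from $J(0^+) = 0$ to $J(1) = 1/4$), has a unique interior critical point $\beta = 2$ on $[1,\infty)$ with $J(2) = 1/2$, and then decays to $-\infty$ as $\beta \to \infty$. Hence $J(\beta) \le 1/2 < 2 = 2\int_0^1 \log(1/s)\,ds$, which proves the lemma.

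The main obstacle is the bookkeeping in the closed-form computation for $J(\beta)$: near $\beta = 1$ both $\log(1-\beta)$ and $\log(\beta-1)$ diverge, yet are tamed by the factor $(1-\beta)^2$, and one must verify that the two branches of the piecewise formula agree continuously. Once $J(\beta)$ is written down, the uniform bound $J \le 1/2$ follows from elementary one-variable calculus.
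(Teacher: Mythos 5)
Your proof is correct, and its first half is the same reduction the paper uses: the paper substitutes $\tau=\frac{|x|}{|y|}t$ and bounds $\bigl|\frac{y}{|y|}-\tau\frac{x}{|x|}\bigr|\ge |1-\tau|$, which after undoing the substitution is exactly your reverse-triangle-inequality estimate $I(x,y)\le J(|x|/|y|)$ with $J(\beta)=\int_0^1(1-t)\log\frac{1}{|1-t\beta|}\,dt$; indeed the paper's $\varphi(\rho)$ coincides with your $J(\rho)$. Where you genuinely diverge is in how this one-variable function is bounded. The paper never evaluates it: it drops the weight $1-\tau/\rho\le 1$ on the set $\tau\le 2$ where $\log\frac{1}{|1-\tau|}\ge 0$ and discards the contribution from $\tau>2$ where that logarithm is negative, giving $\varphi(\rho)\le\int_0^2\log\frac{1}{|1-\tau|}\,d\tau=2\int_0^1\log\frac{1}{s}\,ds$ in a few lines. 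You instead compute $J$ in closed form and optimize; I checked your formulas and the calculus (for $\beta>1$ the numerator of $J'$ is $\beta(2-\beta)-2(\beta-1)\log(\beta-1)$, which is positive on $(1,2)$, vanishes at $\beta=2$, and is negative beyond, so $\sup_{\beta>0}J(\beta)=J(2)=\tfrac12$), and the two branches do match continuously at $\beta=1$ with value $\tfrac14$. Your route costs more bookkeeping but yields the sharp constant $\tfrac12$, whereas the paper's soft domination gives only the constant $2=2\int_0^1\log\frac{1}{s}\,ds$, which is all the lemma requires.
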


\begin{proof}
 Since $I(0,y)=0$ we can assume $x\neq 0$.  Under the change of 
variables $\tau=\frac{|x|}{|y|} t$ we have 
 \begin{align*}
   I(x,y)&= \frac{|y|}{|x|} \int^{\frac{|x|}{|y|}}_{0} 
\left(1-\frac{|y|}{|x|}\,\tau \right)
\log \frac{1}{|\frac{y}{|y|}-\tau \frac{x}{|x|}|}\,d\tau \\
   &\leq \frac{|y|}{|x|} \int^{\frac{|x|}{|y|}}_{0} 
\left(1-\frac{|y|}{|x|}\tau \right)\log \frac{1}{|1-\tau|} \,d\tau \\
   &=\varphi(\frac{|x|}{|y|})
 \end{align*}
where $\varphi :(0,\infty) \to \mathbb{R}$ is given by
 \begin{align*}
 \varphi(\rho): &= \frac{1}{\rho} \int^{\rho}_{0} 
\left(1-\frac{\tau}{\rho} \right) \log \frac{1}{|1-\tau|} \,d\tau \\
 &\leq \frac{1}{\rho} \int^{\min\{\rho,2\}}_{0} \log \frac{1}{|1-\tau|} \,d\tau \\ 
 &\leq 
 \begin{cases}
  \frac{1}{2} \int^{2}_{0} \log \frac{1}{|1-\tau|} \,d\tau, &\text{if }\rho \geq2\\
  \frac{1}{\rho} \int^{\rho}_{0} \log \frac{1}{|1-\tau|} \,d\tau, 
&\text{if }0<\rho \le 2
 \end{cases}\\
 &\leq \int^{2}_{0} \log \frac{1}{|1-\tau|}\,d\tau
=2 \int^{1}_{0} \log \frac{1}{s} \,ds.
 \end{align*}
\end{proof}

\begin{lem}\label{lem2.10}
There does not exist a $C^4$ positive solution of 
\begin{equation}\label{eq2.20}
-\Delta^2v\ge v^{-1} \quad 
\text{in $\mathbb R^n\setminus \overline{B_{R/2}(0)}$, $n\ge 3$,}
\end{equation}
where $R$ is a positive constant.
\end{lem}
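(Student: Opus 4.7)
The plan is to reduce to a radial ODE via spherical averaging and then close the argument with an $L^1$-moment estimate at infinity. Let $v$ be a hypothetical $C^4$ positive solution and let $u(r):=\bar v(r)$ denote its spherical mean. Since $s\mapsto 1/s$ is convex, Jensen's inequality gives $\overline{v^{-1}}(r)\ge 1/\bar v(r)$, and commuting the average with $\Delta^2$ yields $-\Delta^2 u(r)\ge 1/u(r)$ for $r>R/2$. Thus it suffices to rule out a positive radial $u$ with $-\Delta^2 u\ge u^{-1}$ in $\{r>R/2\}\subset\mathbb R^n$, $n\ge 3$.

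Set $F(r):=-\Delta^2 u(r)\ge 1/u(r)>0$. The radial identity $(r^{n-1}(\Delta u)'(r))'=-r^{n-1}F(r)$ integrates twice (with an application of Fubini) to
\[
\Delta u(r)=c_1+c_2\,r^{2-n}-\frac{1}{n-2}\int_R^r sF(s)\bigl(1-(s/r)^{n-2}\bigr)\,ds,
\]
where $c_1,c_2$ depend only on the Cauchy data at $r=R$. If $\int_R^\infty sF(s)\,ds=\infty$, monotone convergence forces the integral term to $-\infty$, so $\Delta u(r)\to-\infty$; then $(r^{n-1}u')'=r^{n-1}\Delta u\le-cr^{n-1}$ for large $r$ integrates twice to $u(r)\le-c'r^2+O(r)$, contradicting $u>0$. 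Hence
\[
\int_R^\infty\frac{s}{u(s)}\,ds\;\le\;\int_R^\infty sF(s)\,ds\;<\;\infty. \qquad(\star)
\]

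To close, I would show $u(r)=O(r^2)$, which is incompatible with $(\star)$. Since $\int_R^\infty sF\,ds<\infty$, the displayed expression above shows $\Delta u$ is bounded; say $|\Delta u|\le M$. Integrating $(r^{n-1}u')'=r^{n-1}\Delta u$ twice yields $u(r)\le Cr^2$ for all sufficiently large $r$, hence $s/u(s)\ge c/s$ at infinity and $\int_R^\infty s/u(s)\,ds=\infty$, contradicting $(\star)$. The main delicacy is identifying the clean combination $1-(s/r)^{n-2}$ that results from collapsing the two iterated integrals, and tracking signs carefully so that both the moment estimate $(\star)$ and the subsequent boundedness of $\Delta u$ can be read off from a single formula; once this is in hand, the borderline exponent $\lambda=-1$ (which is not covered by Theorem~\ref{thm1.1.3} because the condition $f(s)/s^{-1}\to\infty$ just fails) is dispatched by purely ODE-theoretic means.
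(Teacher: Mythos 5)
Your proposal is correct and follows essentially the same route as the paper: reduce to a radial ODE by spherical averaging (using convexity of $s\mapsto s^{-1}$), integrate the radial operator by variation of parameters, extract the quadratic growth bound $u(r)\le Cr^2$, and reach a contradiction from the borderline divergence $\int^\infty s\cdot(Cs^2)^{-1}\,ds=\infty$. The only difference is organizational — the paper integrates four times at once to write $v=c_1+c_2r^2+c_3r^{2-n}+c_4\Phi(r)-(N^2F)(r)$ and contradicts positivity via $(N^2F)(r)\gg r^2$, whereas you work at the level of $\Delta u$ and contradict the moment bound $\int_R^\infty sF(s)\,ds<\infty$ — but the substance is identical.
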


\begin{proof} 
By averaging \eqref{eq2.20} we can assume $v$ is radial.  Let
$F(r)=-\Delta^2v(r)$. Then 
\begin{equation}\label{eq2.21}
v(r)=c_1+c_2r^2+c_3r^{2-n}+c_4\Phi(r)-(N^2F)(r)\quad\text{for $r\ge R$}
\end{equation}
where $\Phi(r)$ is given by \eqref{eq1.10}--\eqref{eq1.12} and
\[
(NF)(r):=\int_R^rs^{1-n}\int_R^s\rho^{n-1}F(\rho)\,d\rho\,ds\ge 0.
\]
Thus for some positive constant $C$ we have $v(r)<Cr^2$ for $r\ge
R$, which implies 
\[
F(r)=-\Delta^2v(r)\ge v(r)^{-1} \ge \frac{1}{Cr^2} \quad \text{for
  $r\ge R$.}
\]
Hence $(NF)(r)\to\infty$ as $r\to\infty$. Thus $(N^2F)(r)>>r^2$ as
$r\to\infty$ which together with \eqref{eq2.21} contradicts the
positivity of $v(r)$.
\end{proof}

\section{Beginning of the Proof of Theorem \ref{thm1.2}} \label{sec3}
In this section we begin the proof of Theorem \ref{thm1.2}.
In Sections \ref{sec4},  \ref{sec5}, and \ref{sec6}, we will complete
the proof of Theorem \ref{thm1.2} when $n\ge 5$, $n=4$, and $n=3$,
respectively. 
\medskip

\n {\it Beginning of the proof of Theorem \ref{thm1.2}}.  
Suppose for contradiction
that $v(y)$ is a $C^4$ positive solution of \eqref{eq1.3} in
$\mathbb{R}^n\setminus\Omega$.  By scaling $v$, we can assume
$\Omega=\overline{B_{1/2}(0)}$ and
\begin{equation}\label{eq3.1}
  f(s)\ge s^{1+\frac{4-\sigma}{n-2}} \quad \text{for} \quad 0<s\leq 1.
 \end{equation}
Moreover, by Lemma \ref{lem2.3}, we can assume
\begin{equation}\label{eq3.2}
f \text{ is decreasing on }[1,\infty).
\end{equation}
Let $u(x)=|y|^{n-4} v(y)$, $y=\frac{x}{|x|^2}$ be the $2$-Kelvin
transform of $v(y)$.  Then
\[
v(y)=|x|^{n-4} u(x)\quad\text{and} \quad \Delta^2 v(y)=|x|^{n+4} \Delta^2 u(x). 
\]
(See \cite{WX} and \cite{X}.)  It follows therefore from \eqref{eq1.3}
and \eqref{eq3.1} that $u(x)$ is a $C^4$ positive solution of
\begin{equation}\label{eq3.3}
 -\Delta^2 u(x)\geq 
\begin{cases}
  |x|^{\frac{2\sigma-4n}{n-2}} u(x)^{1+\frac{4-\sigma}{n-2}} , &\text{if }0<u(x)\leq |x|^{4-n}\\
  |x|^{-n-4+\sigma} f(|x|^{n-4}u(x)) , &\text{if }u(x)\geq |x|^{4-n}
 \end{cases}
\quad\text{in }B_2 (0) \setminus \{0\}.
\end{equation}

Let $\Psi$ and $N$ be as defined in Appendix \ref{secA}. Since $u$
is a $C^4$ positive solution of
\eqref{eqA.1}, it follows from Theorem \ref{thmA} that $u$
satisfies \eqref{eq3.6}, \eqref{eq3.7}, and \eqref{eq3.8}.

By \eqref{eq3.8} and Lemma \ref{lem2.4}, there exist
constants $c_i, i=1,\ldots,5$, such that for $0<r<1$ we have
\begin{align}\notag
 \int_{r<|x|<1} |x|^{-4} u(x)\,dx=&
\int_{r<|x|<1} |x|^{-4} N(x)\,dx\\
&+\begin{cases}c_1r^{n-2} +c_2 r^{n-4} +c_3 \log \frac{1}{r} +c_4 r^{-2}+c_5
&\text{if $n=3$ or $n\ge 5$}\\\label{eq3.10}
c_1r^2 +c_2\log\frac{e}{r} +c_3 (\log \frac{e}{r})^2 +c_4 r^{-2}+c_5
&\text{if $n=4$.}
\end{cases}
\end{align}

\section{Completion of the Proof of Theorem \ref{thm1.2} when $n\ge  5$} 
\label{sec4}

When $n\ge 5$, we complete in this section the proof of Theorem
\ref{thm1.2} which we began in Section \ref{sec3}.

\begin{proof}[Completion of the proof of Theorem \ref{thm1.2} when
  $n\ge 5$] 
For $x\in \mathbb R^n$, $n\ge 5$, we see by Lemma \ref{lem2.1} that
\begin{equation}\label{eq3.0}
\frac{1}{|\partial B_r |} \int_{|y|=r} \frac{1}{|x-y|^{n-4}}\,dS_y
=\begin{cases}
   r^{4-n}-\frac{n-4}{n} r^{2-n} |x|^2 , &\text{if } |x|<r \\
   |x|^{4-n}-\frac{n-4}{n}r^2 |x|^{2-n}, &\text{if } |x|>r.
  \end{cases}
\end{equation}
It therefore follows from equations \eqref{eq1.10} and \eqref{eq3.5} 
that for $r>0$ we have
\[
 \frac{1}{A|\partial B_r |} \int_{|x|=r} \Psi (x,y)\,dS_x=
 \begin{cases}
  -\frac{n-4}{n}r^{2-n} |y|^2 , &\text{if }|y|<r \\
  -r^{4-n} p\left(\frac{r}{|y|}\right), &\text{if }|y|\geq r
 \end{cases}
\]
where $p(t):=1-t^{n-4}+\frac{n-4}{n}t^{n-2}$ is bounded between
positive constants for $0\leq t \leq 1$.  Hence
\[
 -\int_{|x|=r} \Psi(x,y)\,dS_x \sim 
 \begin{cases}
  2r|y|^2, &\text{if }|y|<r\\
  r^3, &\text{if }|y|\geq r
 \end{cases}
\quad\text{ for }\quad (r,y) \in (0,\infty) \times \mathbb{R}^n .
\]
(If $f$ and $g$ are nonnegative functions defined on a set $S$ then
when we write ``$f(X) \sim g(X)$ for $X \in S$'' we mean there
exist positive constants $C_1$ and $C_2$ such that $C_1 g(X) \leq f(X)
\leq C_2 g(X)$ for all $X \in S$.)  Thus by \eqref{eq3.9}, for $0<r\leq
\frac{1}{4}$, we have
\begin{align}
 \notag \int_{r<|x|<1} |x|^{-4} N(x)\,dx =& \int_{|y|<1} \int^{1}_{r} 
\rho^{-4} \int_{|x|=\rho} -\Psi(x,y)\,dS_x \,d\rho (-\Delta^2 u(y))\,dy\\
\notag \sim& \int_{r<|y|<1} \left( \int^{1}_{|y|} 2\rho^{-3} |y|^2\,d\rho +
\int^{|y|}_{r} \rho^{-1} \,d\rho \right) (-\Delta^2 u(y))\,dy\\
\notag &+ \int_{|y|<r}\left(\int_r^12\rho^{-3}|y|^2\,d\rho\right)(-\Delta^2 u(y))\,dy\\
\sim& \int_{r<|y|<1} \left(\log \frac{e|y|}{r}\right)(-\Delta^2 u(y))\,dy+g(r)\label{eq3.11}
\end{align}
by Lemma \ref{lem2.2} with $\alpha=1$ where
\begin{equation}\label{eq3.12}
 0<g(r):=\left(\frac{1}{r^2}-1\right)\int_{|y|<r} |y|^2 (-\Delta^2
 u(y))\,dy=o\left(\frac{1}{r^2}\right) \quad
\text{as}\quad r\to0^{+}
\end{equation}
by \eqref{eq3.7}.

Let $\varphi(t,r)=t^{-2}\log \frac{et}{r}$.  Since $\varphi_t (t,r)<0$
for $t\geq r>0$, we see for $0<r<1$ that 
\begin{align*}
 \int_{r<|y|<1} \left(\log \frac{e|y|}{r}\right)&(-\Delta^2
  u(y))\,dy=\int_{r<|y|<1} \varphi(|y|,r)|y|^2 (-\Delta^2 u(y))\,dy\\
 &\leq \varphi(r,r) \int_{r<|y|<\sqrt{r}} |y|^2(-\Delta^2 u(y))\,dy+
\varphi(\sqrt{r},r) \int_{\sqrt{r}<|y|<1} |y|^2 (-\Delta^2 u(y))\,dy\\
 &\leq \frac{1}{r^2} \int_{r<|y|<\sqrt{r}} |y|^2(-\Delta^2 u(y))\,dy
+\frac{1}{r} \left(\log \frac{e}{\sqrt{r}}\right) \int_{|y|<1} |y|^2 (-\Delta^2 u(y))\,dy\\
 &=o(r^{-2})\quad \text{as}\quad r\to 0^{+}
\end{align*}
by \eqref{eq3.7}. It follows therefore from \eqref{eq3.11} and
\eqref{eq3.12} that
$$\int_{r<|x|<1} |x|^{-4}N(x)\,dx=o(r^{-2})\quad \text{as}\quad r\to0^{+}.$$ 
Hence, by \eqref{eq3.10} and the positivity of $u$ we see
that the constant $c_4$ in \eqref{eq3.10} is nonnegative and thus by
\eqref{eq3.10}, \eqref{eq3.11}, and the positivity of $g$ we have
\begin{equation}\label{eq3.13}
 \int_{r<|x|<1} |x|^{-4} u(x)\,dx \geq \frac{1}{C} \int_{r<|y|<1} \left(\log
 \frac{e|y|}{r}\right)(-\Delta^2 u(y))\,dy-C\log\frac{e}{r}\quad \text{for}
\quad 0<r<\frac{1}{4}
\end{equation}
where $C$ is a positive constant independent of $r$.  

By \eqref{eq3.6} there exists a constant $M>1$ such that $0<u(x)\leq
M|x|^{2-n}$ for $0<|x|\leq 1$. Define $I_1, I_2 :(0,1)\to[0,\infty)$
by
$$I_1(r):=M\int_{x\in S_1(r)} |x|^{-n-2}\,dx\quad \text{and}\quad I_2(r):=\int_{x\in S_2 (r)} |x|^{-4}u(x)\,dx$$ where 
\[S_1(r):=\{ x\in \mathbb{R}^n :r<|x|<1 \text{ and }|x|^{4-n}
<u(x)\leq M|x|^{2-n} \}\] 
and 
\[S_2(r):=\{ x\in \mathbb{R}^n :r<|x|<1\text{ and }0<u(x)\leq |x|^{4-n} \}.\]  Then
$S_1(r) \cup S_2(r)=B_1(0)-\overline{B_r(0)}$,
\begin{equation}\label{eq3.14}
 I_2(r)=O\left(\log\frac{1}{r}\right)\quad \text{as}\quad r\to0^{+} ,
\end{equation}
and for $0<r<\frac{1}{4}$ we have
\begin{align}\label{eq3.15}
 I_1(r)+I_2(r) &\geq \int_{r<|x|<1} |x|^{-4} u(x)\,dx\\
&\geq \frac{1}{C} \int_{r<|y|<1} \left(\log \frac{e|y|}{r}\right)(-\Delta^2 u(y))\,dy-C\log\frac{e}{r}
\label{eq3.16}\end{align}
by \eqref{eq3.13}.

By \eqref{eq3.3} we have
\begin{equation}\label{eq3.17}
 \int_{r<|y|<1} \left(\log\frac{e|y|}{r}\right)(-\Delta^2 u(y))\,dy\geq J_1(r)+J_2(r)\quad
\text{for}\quad 0<r<1
\end{equation}
where 
\[J_2(r):=\int_{S_2(r)}|y|^{\frac{2\sigma-4n}{n-2}}u(y)^{1+\frac{4-\sigma}{n-2}}\,dy\] 
and 
\[
J_1(r):= \int_{S_1(r)} \left(\log \frac{e|y|}{r}\right)|y|^{-n-4+\sigma} f(|y|^{n-4} u(y))\,dy.
\]

Before continuing with the proof of Theorem \ref{thm1.2}, we prove the
following lemma.

\begin{lem}\label{lem3.1}
As $r\to 0^+$ we have
\begin{equation}\label{eq3.17.5}
J_1(r)=O\left(\log\frac{1}{r}\right),
\end{equation}
\begin{equation}\label{eq3.18}
J_2(r)=O\left(\log\frac{1}{r}\right),
\end{equation}
and
\begin{equation}\label{eq3.19}
I_1(r)=o\left(\log\frac{1}{r}\right).
\end{equation}
\end{lem}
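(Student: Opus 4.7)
The plan is to derive all three estimates from the averaging inequality (3.16) together with a careful pointwise lower bound on $-\Delta^2 u$ on $S_1(r)$ coming from the growth of $f$ at infinity.

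Coupling (3.16) and (3.17) with the estimate $I_2(r)=O(\log(1/r))$ from (3.14) gives
\[
J_1(r)+J_2(r) \;\le\; \int_{r<|y|<1}\bigl(\log(e|y|/r)\bigr)(-\Delta^2u)\,dy \;\le\; C\bigl(I_1(r)+I_2(r)\bigr)+C\log(1/r),
\]
so the bounds (3.17.5) and (3.18) for $J_1,J_2$ will follow as soon as $I_1(r)=O(\log(1/r))$ is established; the sharper statement (3.19) is the real crux.

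To bound $I_1$, write the second condition of (1.4) quantitatively: for every $K>0$ there exists $s_K\ge 1$ with $f(s)\ge Ks^{-1+\sigma/2}$ for $s\ge s_K$ (this uses continuity and positivity of $f$). Define $S_1^{(K)}:=\{y\in S_1(r):|y|^{n-4}u(y)\ge s_K\}$. On $S_1^{(K)}$, using $u(y)\le M|y|^{2-n}$ (so $|y|^{n-4}u(y)\le M|y|^{-2}$) and the negativity of $-1+\sigma/2$, substitution into (3.3) and cancellation in the exponents yield $-\Delta^2u(y)\ge KM^{-1+\sigma/2}|y|^{-n-2}$ on $S_1^{(K)}$. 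Hence, using $\log(e|y|/r)\ge 1$ for $|y|\ge r$ and (3.16),
\[
M\int_{S_1^{(K)}}|y|^{-n-2}\,dy \;\le\; \frac{M^{2-\sigma/2}}{K}\int_{r<|y|<1}\bigl(\log(e|y|/r)\bigr)(-\Delta^2 u)\,dy \;\le\; \frac{CM^{2-\sigma/2}}{K}\bigl(I_1(r)+I_2(r)+\log(1/r)\bigr).
\]
On the complement $S_1\setminus S_1^{(K)}$, where $1\le|y|^{n-4}u<s_K$, monotonicity of $f$ on $[1,\infty)$ gives $f(|y|^{n-4}u)\ge f(s_K)=:c_K>0$, so $-\Delta^2u\ge c_K|y|^{-n-4+\sigma}$; since $\sigma<2$ implies $|y|^{-n-4+\sigma}\ge|y|^{-n-2}$ on $|y|\le 1$, the same reasoning yields
\[
M\int_{S_1\setminus S_1^{(K)}}|y|^{-n-2}\,dy \;\le\; \frac{MC}{c_K}\bigl(I_1(r)+I_2(r)+\log(1/r)\bigr).
\]

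Summing the two contributions,
\[
I_1(r) \;\le\; \Bigl(\tfrac{CM^{2-\sigma/2}}{K}+\tfrac{MC}{c_K}\Bigr)\bigl(I_1(r)+I_2(r)+\log(1/r)\bigr).
\]
Fixing $K$ large enough so this coefficient is at most $1/2$ and absorbing $I_1$ on the left gives $I_1(r)=O(\log(1/r))$, which supplies (3.17.5) and (3.18). For the sharper $o$ statement (3.19), one lets $K\to\infty$: the ``high'' coefficient $CM^{2-\sigma/2}/K$ becomes a vanishing fraction of $\log(1/r)$, forcing $\limsup_{r\to 0^+}I_1(r)/\log(1/r)$ to be arbitrarily small, whence $I_1(r)=o(\log(1/r))$.

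The main obstacle is keeping the constant $c_K=f(s_K)$ under control as $K\to\infty$: since $f$ is only assumed positive, continuous, and eventually decreasing, $c_K$ may tend to $0$, so one must balance the ``high'' coefficient $CM^{2-\sigma/2}/K$ against the ``low'' coefficient $MC/c_K$ carefully. This balance, enabled by the strong growth $f(s)/s^{-1+\sigma/2}\to\infty$ at infinity and the positivity of $f$ on bounded intervals, is the heart of the proof.
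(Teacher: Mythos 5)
Your setup is fine: the coupling $J_1(r)+J_2(r)\le C\bigl(I_1(r)+I_2(r)\bigr)+C\log\frac{e}{r}$ is exactly the paper's inequality, and the pointwise bounds you derive on the two pieces of $S_1(r)$ are correct. The genuine gap is the absorption step. Your final inequality reads
\[
I_1(r)\;\le\;\Bigl(\tfrac{CM^{2-\sigma/2}}{K}+\tfrac{MC}{c_K}\Bigr)\bigl(I_1(r)+I_2(r)+\log\tfrac1r\bigr),
\]
and you propose to choose $K$ so large that the coefficient is at most $\tfrac12$. But the second term $MC/c_K$ with $c_K=f(s_K)$ does not shrink as $K$ grows: $f$ is decreasing on $[1,\infty)$, so $c_K\le f(1)$ and typically $c_K\to0$ (e.g. $f(s)=s^{-1+\sigma/2}\log s$ satisfies \eqref{eq1.4} yet $f(s_K)\to0$), while $M$ and $C$ are fixed constants coming from $u$ that may be arbitrarily large. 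Hence the coefficient cannot be pushed below $1$, the self-improvement is vacuous, and neither the $O$ bound nor the ``let $K\to\infty$'' argument for the $o$ bound \eqref{eq3.19} goes through. The difficulty you flag in your last paragraph is real, and it is not resolved by the growth hypothesis alone: on your ``low'' set $1\le|y|^{n-4}u(y)<s_K$ the point $y$ may still be arbitrarily close to the origin, so the only gain there is the fixed constant $c_K$, which is not enough to dominate the integrand $M|y|^{-n-2}$ of $I_1$. (Bounding the low part by $\int_{|y|<1}(-\Delta^2u)\,dy$ would work, but that integral is only shown finite in \eqref{eq3.20}, which in the paper is deduced \emph{from} this lemma, so invoking it here would be circular.)

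The paper avoids this by splitting the annulus in $|y|$ rather than in the value of $|y|^{n-4}u(y)$, and crucially by \emph{not} discarding the weight $\log\frac{e|y|}{r}$. Using \eqref{eq3.2} one bounds $f(|y|^{n-4}u(y))\ge f(M|y|^{-2})$, so the gain from \eqref{eq1.4} becomes a function of $|y|$ alone: for $r<|y|<\sqrt r$ the quantity $(M|y|^{-2})^{1-\sigma/2}f(M|y|^{-2})$ is large because $M|y|^{-2}\ge M/r\to\infty$, while for $\sqrt r<|y|<1$ it is only bounded below, but there the weight $\log\frac{e|y|}{r}\ge\log\frac{e}{\sqrt r}\to\infty$ takes over. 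This yields $J_1(r)\ge\omega(r)\,I_1(r)$ with $\omega(r)\to\infty$, and feeding that into your coupling inequality gives \eqref{eq3.17.5}, then \eqref{eq3.19} (since $I_1\le J_1/\omega$), then \eqref{eq3.18}, with no delicate balancing of constants. If you want to keep your two-region structure, you must retain the logarithmic weight on the region where the argument of $f$ is not large; as written, dropping $\log\frac{e|y|}{r}\ge1$ is exactly what loses the proof.
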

\begin{proof}
By \eqref{eq3.17}, \eqref{eq3.16}, and \eqref{eq3.14} we 
have  
\begin{align}\notag
 J_1(r) &\leq J_1(r)+J_2(r) \leq C \left[ I_1(r)+I_2(r)+\log\frac{e}{r} \right]\\
 \label{eq3.19.5}
&=CI_1(r)+O(\log\frac{1}{r})\quad\text{ as }\quad r\to 0^+.
\end{align}
If $S_1(0)=\emptyset$ then $I_1(r)\equiv J_1(r)\equiv 0$ for $0<r<1$
and thus \eqref{eq3.18} follows from \eqref{eq3.19.5}. Hence we can
assume $S_1(0)\not=\emptyset$. So for $r$ small and positive,
$S_1(r)\not=\emptyset$, $I_1(r)>0$, and 
\begin{align*}
 J_1(r)&\geq \left[ \inf_{y\in S_1(r)} \left(\log
   \frac{e|y|}{r}\right)|y|^{-2+\sigma}f(|y|^{n-4} u(y)) \right] 
\frac{I_1(r)}{M}\\
 &\geq \left[ \inf_{r<|y|<1} \left(\log\frac{e|y|}{r}\right)
(|y|^{-2})^{1-\sigma/2}f(M|y|^{-2})
  \right] 
\frac{I_1(r)}{M}
\end{align*}
by \eqref{eq3.2} and because $1< |y|^{n-4} u(y)\leq M|y|^{-2}$ for
$y\in S_1(r)$.  Thus 
\begin{equation*}
 \frac{M^{2-\sigma/2} J_1 (r)}{I_1(r)} \geq \min \left\{\ \inf_{r<|y|<\sqrt{r}}
 (M|y|^{-2})^{1-\sigma/2}f(M|y|^{-2}),\ \left(\log\frac{e}{\sqrt{r}}\right) 
\inf_{\sqrt{r}<|y|<1} (M|y|^{-2})^{1-\sigma/2}f(M|y|^{-2}) \right\}\
 \to \infty 
\end{equation*}
as $r\to 0^+$ by \eqref{eq1.4}.  
Hence \eqref{eq3.19.5} implies \eqref{eq3.17.5}; and \eqref{eq3.17.5}
implies \eqref{eq3.19}. Finally, \eqref{eq3.18} follows from
\eqref{eq3.19.5} and \eqref{eq3.19}.
\end{proof}

Continuing with the proof of Theorem \ref{thm1.2}, it follows from
\eqref{eq3.14}, \eqref{eq3.19}, and \eqref{eq3.16} that there is a
constant $C>0$ such that for $0<r<1$ we have
\begin{align*}
 C&\geq \frac{1}{\log\frac{e}{r}} \int_{r<|y|<1} \left(\log\frac{e|y|}{r}\right)(-\Delta^2 u(y))\,dy\\
 &\geq \frac{1}{\log\frac{e}{r}} \int_{\sqrt{r}<|y|<1} \left(\log\frac{e}{\sqrt{r}}\right)(-\Delta^2 u(y))\,dy\\
 &\geq \frac{1}{2} \int_{\sqrt{r}<|y|<1} -\Delta^2 u(y)\,dy.
\end{align*}
Thus
\begin{equation}\label{eq3.20}
 \int_{|y|<1} -\Delta^2 u(y)\,dy<\infty.
\end{equation}

By \eqref{eq3.19} there exists a constant $C>0$ such that
$I_1(2^{-(j+1)}) \leq C(j+1)$ for $j=0,1,2,\ldots$.  Thus for each
$\vp>0$ we have
\begin{align*}
 M \int_{S_1(0)}|x|^{-n-2+\vp}\,dx&\leq
 \sum^{\infty}_{j=0}2^{-\vp j}M 
\int_{\substack{2^{-(j+1)}<|x|<2^{-j}\\ x\in S_1(0)}} |x|^{-n-2} \,dx\\
 &\leq \sum^{\infty}_{j=0} 2^{-\vp j} I_1 (2^{-(j+1)})\\
 &\leq C\sum^{\infty}_{j=0}(j+1)(2^{-\vp})^j <\infty.
\end{align*}
Hence, for $0<\vp \leq 1$,
\[
 \int_{|x|<1} |x|^{-4+\vp} u(x)\,dx 
\leq M \int_{S_1(0)} |x|^{-n-2+\vp} \,dx+\int_{|x|<1} |x|^{-n+\vp} \,dx
 <\infty
\]
and so taking $\vp=1$ we have for $0<r<1$ that
\begin{equation}\label{eq3.21}
 \int_{|x|<r} u(x)\,dx \leq \int_{|x|<r} \frac{r^3}{|x|^3} u(x)\,dx=o(r^3)
 \quad\text{as}\quad r\to 0^+.
\end{equation}

Let $F=\Delta^2 u$ and 
$$\hat{N}(x)=\int_{B_1(0)} 
\Phi(x-y)F(y)\,dy\quad \text{for}\quad x\in \mathbb{R}^n .$$  
By \eqref{eq3.0} and \eqref{eq3.20} we have for $0<r\le 1$ that
\begin{align}
 \notag \int_{|x|<r} |\hat{N}(x)|\,dx &= \int_{|y|<1} \left( \int_{|x|<r} \frac{A}{|x-y|^{n-4}} \,dx \right) (-F(y))\,dy\\
 \notag &\leq \int_{|y|<1} \left( \int_{|x|<r} \frac{A}{|x|^{n-4}} \,dx \right) (-F(y))\,dy\\
 &=Cr^4 .\label{eq3.22}
\end{align}
In particular $\hat{N} \in L^{1}_{\text{loc}} (B_1(0))$.  
Also for $\varphi \in C^{\infty}_{0} (B_1(0))$ we have
\begin{align*}
 \int_{B_1(0)} \hat{N} \Delta^2 \varphi \,dx &= \int_{B_1(0)} \left( \int_{B_1(0)} \Phi(x-y)\Delta^2 \varphi (x)\,dx \right) F(y)\,dy\\
 &=\int_{B_1(0)} \varphi (y)F(y)\,dy.
\end{align*}
Thus $\Delta^2 \hat{N}=F$ in ${\cal D\,}'(B_1(0))$.

Let $v=u-\hat{N}$.  By \eqref{eq3.21} and \eqref{eq3.22} we see that
$v$ satisfies \eqref{eq2.3}.
Since $\Delta^2 u=F$ in ${\cal D\,}' (B_1(0) \setminus \{0\})$ we have
$\Delta^2 v=\Delta^2 u-\Delta^2 \hat{N}=0$ in ${\cal D\,}' (B_1(0) \setminus
\{0\})$.  
Thus Lemma \ref{lem2.5} implies for some constant $a$ and some
$C^\infty$ solution $H$ of
$\Delta^2 H=0$ in $B_1(0)$ we have
\begin{equation}\label{eq3.24}
 u=\hat{N}+a\Phi+H \quad \text{in}\quad B_1 (0)\setminus \{0\}.
\end{equation}
Hence, since $\hat{N}<0$ and $u>0$ we have $a\geq 0$.
\medskip

\noindent \underline{Case I.} Suppose the constant $a$ in
\eqref{eq3.24} is positive.  By \eqref{eq3.0} we have
\begin{equation*}
 \frac{\rho^{-4}}{A|\partial B_1 |} \int_{|x|=\rho} \Phi(x-y)\,dS_x \leq 
 \begin{cases}
  \rho^{-1}, &\text{if }|y|<\rho\\
  |y|^{4-n} \rho^{n-5} , &\text{if }|y|>\rho.
 \end{cases}
\end{equation*}
Thus for $0<r<1$ we have
\begin{align*}
 \frac{1}{A|\partial B_1 |} \int_{r<|x|<1} |x|^{-4} (-\hat{N}(x))\,dx &= \int_{|y|<1} \int^{1}_{r}\frac{\rho^{-4}}{A|\partial B_1 |} \int_{|x|=\rho} \Phi(x-y)\,dS_x \,d\rho(-\Delta^2 u(y))\,dy\\
 &\leq \int_{r<|y|<1} \left( \int^{|y|}_{r} |y|^{4-n} \rho^{n-5} \,d\rho+\int^{1}_{|y|} \rho^{-1}\,d\rho \right) (-\Delta^2 u(y))\,dy\\
 &+ \int_{|y|<r} \left( \int^{1}_{r} \rho^{-1} \,d\rho \right)(-\Delta^2 u(y))\,dy\\
 &\leq \int_{r<|y|<1} \left(\log\frac{e}{|y|}\right)(-\Delta^2 u(y))\,dy
+\left(\log\frac{1}{r}\right) \int_{|y|<r} -\Delta^2 u(y)\,dy\\
 &=o\left(\log\frac{e}{r}\right)\quad \text{as}\quad r\to 0^{+}
\end{align*}
by \eqref{eq3.20} and the fact that
\begin{align*}
 \int_{r<|y|<1} \left(\log\frac{e}{|y|}\right)(-\Delta^2 u(y))\,dy 
&\leq\left(\log\frac{e}{r}\right) \int_{r<|y|<(\log\frac{1}{r})^{-1}} -\Delta^2 u(y)\,dy\\
 &+\left(\log\left(e \log\frac{1}{r}\right)\right) \int_{|y|<1} -\Delta^2 u(y)\,dy.
\end{align*}
Hence by \eqref{eq3.24}
\begin{align*}
 \int_{r<|x|<1} |x|^{-4} u(x)\,dx &=a \int_{r<|x|<1}
 |x|^{-4}\frac{A}{|x|^{n-4}} \,dx
-o\left(\log\frac{1}{r}\right)\\
 &\geq C\log\frac{1}{r} \quad \text{for small }r>0
\end{align*}
where $C$ is a positive constant.  Thus by \eqref{eq3.15}
and \eqref{eq3.19} we have
\begin{equation}\label{eq3.25}
 I_2(r)\geq C\log\frac{1}{r}\quad \text{for small }r>0.
\end{equation}

On the other hand, by H\"older's inequality and \eqref{eq3.18},
\begin{align*}
 I_2(r)&=\int_{S_2(r)} |x|^{\frac{2\sigma-8}{n+2-\sigma}}
\left(|x|^{\frac{2\sigma-4n}{n+2-\sigma}} u(x)\right)\,dx\\
 &\leq \left( \int_{0<|x|<1} |x|^{-2} \,dx
 \right)^{\frac{4-\sigma}{n+2-\sigma}} 
J_2 (r)^{\frac{n-2}{n+2-\sigma}}\\
 &=O\left(\left(\log\frac{1}{r}\right)^{\frac{n-2}{n+2-\sigma}}\right)\quad\text{as}\quad r\to 0^{+}
\end{align*}
which contradicts \eqref{eq3.25} and completes the proof of Theorem
\ref{thm1.2} in Case I.
\medskip

\noindent \underline{Case II.} Suppose the constant $a$ in
\eqref{eq3.24} is zero.  Then
\begin{equation}\label{eq3.26}
 0<u=\hat{N}+H \quad \text{for}\quad 0<|x|<1.
\end{equation}
Thus $-\hat{N}$ and $u$ are positive and bounded for $0<|x|\le 1/2$
and so \eqref{eq3.3} implies
\begin{equation}\label{eq3.26.5}
-\Delta^2 u\ge C|x|^\frac{2\sigma-4n}{n-2} u^{1+\frac{4-\sigma}{n-2}} \quad\text{in}
\quad \overline{B_1(0)}\setminus\{0\}
\end{equation}
for some positive constant $C$. 
Also, since  \eqref{eq3.0} implies
\begin{align}\label{eq3.27}
 -\frac{1}{A}\bar{\hat{N}}(r): &=\frac{1}{A|\partial B_r|} \int_{|x|=r} -\hat{N} (x)\,dS_x =\int_{|y|<1} \frac{1}{|\partial B_r|} \int_{|x|=r} \frac{\,dS_x}{|x-y|^{n-4}} (-F(y))\,dy\\
 \notag &\geq \frac{4}{n} \int_{r<|y|<1} |y|^{4-n}
 (-F(y))\,dy\quad\text{for} \quad 0<r<1/2
\end{align}
we see that
\begin{equation}\label{eq3.28}
 -\hat{N}_0 :=\int_{|y|<1} \frac{A}{|y|^{n-4}} (-F(y))\,dy<\infty.
\end{equation}
Averaging \eqref{eq3.26} we obtain
\begin{equation}\label{eq3.29}
 0<\bar{u}(r)=\overline{(\hat{N}-\hat{N}_0 )}(r)+a_0 -a_1 Ar^2 
\quad \text{for}\quad 0<r<1
\end{equation}
for some constants $a_0$ and $a_1$.

By \eqref{eq3.27}, \eqref{eq3.28} and \eqref{eq3.0} we have
\begin{align}
 \notag &\frac{1}{A}\overline{(\hat{N}-\hat{N}_0 )}(r)\\
 \notag &= \int_{|y|<1} \left( \frac{1}{|\partial B_r |} \int_{|x|=r} \frac{\,dS_x}{|x-y|^{n-4}} -|y|^{4-n} \right) F(y)\,dy\\
 \label{eq3.30} &=\int_{|y|<r} \left(r^{4-n} -|y|^{4-n} -\frac{n-4}{n} r^{2-n}|y|^2\right)F(y)\,dy
-\int_{r<|y|<1} \frac{n-4}{n}r^2 |y|^{2-n} F(y)\,dy\\
 \notag &=-\int_{|y|<r} |y|^{4-n} \left[ 1-\left(\frac{|y|}{r}\right)^{n-4} 
+\frac{n-4}{n}\left(\frac{|y|}{r}\right)^{n-2} \right] F(y)\,dy\\
 \notag &-\int_{r<|y|<\sqrt{r}} \frac{n-4}{n}\left(\frac{r}{|y|}\right)^2 |y|^{4-n} F(y)\,dy-
\int_{\sqrt{r}<|y|<1} \frac{n-4}{n} \left(\frac{r}{|y|}\right)^2 |y|^{4-n} F(y)\,dy\\
 \notag &\to0 \quad \text{as}\quad r\to 0^{+}
\end{align}
by \eqref{eq3.28}.  Thus by \eqref{eq3.29} we have $a_0 \geq 0$.  
If $a_0>0$ then by \eqref{eq3.3} and the boundedness of $u$ in $B_1 (0)$ we have
\begin{align*}
 -\bar{F}(r)&=-\overline{\Delta^2 u}(r)\geq r^{\frac{2\sigma-4n}{n-2}} 
\bar{u}(r)^{1+\frac{4-\sigma}{n-2}}\\
 &\geq r^{-4-\frac{8-2\sigma}{n-2}} \left(\frac{a_0}{2}\right)^{1+\frac{4-\sigma}{n-2}}
\end{align*}
for $r$ small and positive and thus for small $r_0>0$ we have
\begin{align*}
 \int_{|y|<r_0} |y|^{4-n} (-F(y))\,dy &= \int^{r_0}_{0} r^{4-n} \int_{|y|=r} -F(y)\,dS_y \,dr\\
 &=|\partial B_1|\int^{r_0}_{0} r^3 (-\bar{F}(r))\,dr=\infty
\end{align*}
which contradicts \eqref{eq3.28}.  So $a_0=0,\bar{u}(r)\to 0$ as $r\to0$ 
and by \eqref{eq3.29} and \eqref{eq3.30} we have
\begin{align}
 \notag \frac{\bar{u}(r)}{A} &= \frac{\overline{(\hat{N}-\hat{N_0})}(r)}{A} -a_1 r^2\\
 \label{eq3.31}&= \int_{|y|<r} |y|^{4-n} \left[1-\left(\frac{|y|}{r}\right)^{n-4} 
+\frac{n-4}{n} \left(\frac{|y|}{r}\right)^{n-2} \right] (-\Delta^2 u(y))\,dy+(J(r)-a_1 )r^2
\end{align}
where
\begin{equation*}
 J(r):= \int_{r<|y|<1} \frac{n-4}{n} |y|^{2-n} (-\Delta^2 u(y))\,dy 
\quad \text{for}\quad 0\leq r<1.
\end{equation*}
($J(0)$ may be $\infty$.)
\medskip

\noindent \underline{Case II(a).} Suppose $a_1 <J(0)$.  Then there
exists $\vp>0$ and $r_0 \in (0,1)$ such that $a_1 \leq
(1-\vp)J(r)$ for $0<r\leq r_0$.  Thus by \eqref{eq3.31},
$\frac{\bar{u}(r)}{A} \geq \vp r^2 J(r)$ for $0<r<r_0$, which together
with \eqref{eq3.26.5} and Lemma \ref{lem2.6} gives a contradiction and
thereby proves Theorem \ref{thm1.2} when $n\ge 5$ in Case II(a).
\medskip

\noindent\underline{Case II(b).} Suppose $a_1 \geq J(0)$.  Then for
$0<r<1$ we have
\begin{equation*}
 (J(r)-a_1)r^2 \leq (J(r)-J(0))r^2 =-\int_{|y|<r} 
\frac{n-4}{n} r^2 |y|^{2-n} (-\Delta^2 u(y))\,dy
\end{equation*}
and hence by \eqref{eq3.31} we have
\begin{align}
 \notag 0<\frac{\bar{u}(r)}{A} &\leq \int_{|y|<r} |y|^{4-n} \left[ 1-\left(\frac{r}{|y|}\right)^{4-n} +\frac{n-4}{n} \left(\left(\frac{r}{|y|}\right)^{2-n} -\left(\frac{r}{|y|}\right)^2\right)\right] (-\Delta^2 u(y))\,dy\\
 \label{eq3.32}&=-\int_{|y|<r} |y|^2 r^{2-n} p\left(\frac{r}{|y|}\right)(-\Delta^2 u(y))\,dy
\end{align}
where $p(t):=\frac{n-4}{n} t^n -t^{n-2} +t^2-\frac{n-4}{n}$.  
Since $p(1)=p'(1)=p''(1)=0$ and
\[
 p'''(t) =(n-4)(n-2)(n-1)t^{n-5} \left(t^2 -\frac{n-3}{n-1}\right)>0
\quad\text{for}\quad t\geq 1
\]
we see that $p(t)>0$ for $t>1$.  This contradicts \eqref{eq3.32} 
and completes the proof of Theorem \ref{thm1.2} when $n\ge 5$ in
all cases.
\end{proof}

\section{Completion of the Proof of Theorem \ref{thm1.2} when $n=4$} 
\label{sec5}

When $n=4$, we complete in this section the proof of Theorem
\ref{thm1.2} which we began in Section \ref{sec3}.

\begin{proof}[Completion of the proof of Theorem \ref{thm1.2} when
  $n=4$] 
For $x\in \mathbb R^4$ we see by Lemma \ref{lem2.1} that
\begin{equation}\label{eq5.0}
\frac{1}{|\partial B_r |} \int_{|y|=r} \log\frac{e}{|x-y|}\,dS_y=
\begin{cases}
   \log\frac{e}{r} - \frac{1}{4}r^{-2}|x|^2, &\text{if } |x|<r \\
   \log\frac{e}{|x|} - \frac{1}{4}r^2|x|^{-2}, &\text{if } |x|>r.
  \end{cases}
 \end{equation}
It therefore follows from equations \eqref{eq1.11} and \eqref{eq3.5} 
that for $r>0$ we have
\[
 \frac{1}{A|\partial B_r |} \int_{|x|=r} \Psi (x,y)\,dS_x=
 \begin{cases}
  -\frac{1}{4}r^{-2} |y|^2 , &\text{if }|y|<r \\
  \left(\log\frac{r}{e|y|}\right) p\left(\frac{r}{|y|}\right), 
  &\text{if }|y|\geq r
 \end{cases}
\]
where $p(t):=((\log t)-t^2/4)/\log (t/e)$ is bounded between
positive constants for $0< t \leq 1$.  Hence
\[
 -\int_{|x|=r} \Psi(x,y)\,dS_x \sim 
 \begin{cases}
  2r|y|^2, &\text{if }|y|<r\\
  2r^3\log\frac{e|y|}{r}, &\text{if }|y|\geq r
 \end{cases}
\quad\text{ for }\quad (r,y) \in (0,\infty) \times \mathbb{R}^n .
\]
Thus by \eqref{eq3.9}, for $0<r\leq
\frac{1}{4}$, we have
\begin{align}
 \notag \int_{r<|x|<1} |x|^{-4} N(x)\,dx =& \int_{|y|<1} \int^{1}_{r} 
\rho^{-4} \int_{|x|=\rho} -\Psi(x,y)\,dS_x \,d\rho (-\Delta^2 u(y))\,dy\\
\notag \sim& \int_{r<|y|<1} \left( \int^{1}_{|y|} 2\rho^{-3} |y|^2\,d\rho +
\int^{|y|}_{r} 2\rho^{-1}\log\frac{e|y|}{\rho} \,d\rho \right) (-\Delta^2 u(y))\,dy\\
\notag &+ \int_{|y|<r}\left(\int_r^12\rho^{-3}|y|^2\,d\rho\right)(-\Delta^2 u(y))\,dy\\
\sim& \int_{r<|y|<1} \left(\log \frac{e|y|}{r}\right)^2(-\Delta^2 u(y))\,dy+g(r)\label{eq5.11}
\end{align}
by Lemma \ref{lem2.2} with $\alpha=2$ where
\begin{equation}\label{eq5.12}
 0<g(r):=\left(\frac{1}{r^2}-1\right)\int_{|y|<r} |y|^2 (-\Delta^2
 u(y))\,dy=o\left(\frac{1}{r^2}\right) \quad
\text{as}\quad r\to0^{+}
\end{equation}
by \eqref{eq3.7}.

Let $\varphi(t,r)=t^{-2}\left(\log \frac{et}{r}\right)^2$.  
Since $\varphi_t (t,r)\le 0$
for $t\geq r>0$, we see for $0<r<1$ that 
\begin{align*}
 \int_{r<|y|<1} \left(\log \frac{e|y|}{r}\right)^2&(-\Delta^2
  u(y))\,dy=\int_{r<|y|<1} \varphi(|y|,r)|y|^2 (-\Delta^2 u(y))\,dy\\
 &\leq \varphi(r,r) \int_{r<|y|<\sqrt{r}} |y|^2(-\Delta^2 u(y))\,dy+
\varphi(\sqrt{r},r) \int_{\sqrt{r}<|y|<1} |y|^2 (-\Delta^2 u(y))\,dy\\
 &\leq \frac{1}{r^2} \int_{r<|y|<\sqrt{r}} |y|^2(-\Delta^2 u(y))\,dy
+\frac{1}{r} \left(\log \frac{e}{\sqrt{r}}\right)^2 \int_{|y|<1} |y|^2 (-\Delta^2 u(y))\,dy\\
 &=o(r^{-2})\quad \text{as}\quad r\to 0^{+}
\end{align*}
by \eqref{eq3.7}. It follows therefore from \eqref{eq5.11} and
\eqref{eq5.12} that
$$\int_{r<|x|<1} |x|^{-4}N(x)\,dx=o(r^{-2})\quad \text{as}\quad r\to0^{+}.$$ 
Hence, by \eqref{eq3.10} and the positivity of $u$ we see
that the constant $c_4$ in \eqref{eq3.10} is nonnegative and thus by
\eqref{eq3.10}, \eqref{eq5.11}, and the positivity of $g$ we have
\begin{equation}\label{eq5.13}
 \int_{r<|x|<1} |x|^{-4} u(x)\,dx \geq \frac{1}{C} \int_{r<|y|<1}
 \left(\log \frac{e|y|}{r}\right)^2(-\Delta^2
 u(y))\,dy-C\left(\log\frac{e}{r}\right)^2\quad \text{for} \quad
 0<r<\frac{1}{4}
\end{equation}
where $C$ is a positive constant independent of $r$.  

By \eqref{eq3.6} there exists a constant $M>1$ such that $0<u(x)\leq
M|x|^{-2}$ for $0<|x|\leq 1$. Define $I_1, I_2 :(0,1)\to[0,\infty)$
by
$$I_1(r):=M\int_{x\in S_1(r)} |x|^{-6}\,dx\quad \text{and}\quad I_2(r):=\int_{x\in S_2 (r)} |x|^{-4}u(x)\,dx$$ where 
\[S_1(r):=\{ x\in \mathbb{R}^4 :r<|x|<1 \text{ and } 1
<u(x)\leq M|x|^{-2} \}\] 
and 
\[S_2(r):=\{ x\in \mathbb{R}^4 :r<|x|<1\text{ and }0<u(x)\leq 1 \}.\]  Then
$S_1(r) \cup S_2(r)=B_1(0)-\overline{B_r(0)}$,
\begin{equation}\label{eq5.14}
 I_2(r)=O\left(\log\frac{1}{r}\right)\quad \text{as}\quad r\to0^{+} ,
\end{equation}
and for $0<r<\frac{1}{4}$ we have
\begin{align}\label{eq5.15}
 I_1(r)+I_2(r) &\geq \int_{r<|x|<1} |x|^{-4} u(x)\,dx\\
&\geq \frac{1}{C} \int_{r<|y|<1} \left(\log \frac{e|y|}{r}\right)^2(-\Delta^2
 u(y))\,dy -C\left(\log\frac{e}{r}\right)^2
\label{eq5.16}\end{align}
by \eqref{eq5.13}.

By \eqref{eq3.3} we have
\begin{equation}\label{eq5.17}
 \int_{r<|y|<1} \left(\log\frac{e|y|}{r}\right)^2(-\Delta^2 u(y))\,dy\geq J_1(r)+J_2(r)\quad
\text{for}\quad 0<r<1
\end{equation}
where 
\[J_2(r):=\int_{S_2(r)}|y|^{\sigma-8}u(y)^{3-\sigma/2}\,dy\] 
and

\[
J_1(r):= \int_{S_1(r)} \left(\log \frac{e|y|}{r}\right)^2|y|^{\sigma-8} f(u(y))\,dy.
\]

Before continuing with the proof of Theorem \ref{thm1.2}, we prove the
following lemma.

\begin{lem}\label{lem5.1}
As $r\to 0^+$ we have
\begin{equation}\label{eq5.17.5}
J_1(r)=O\left(\left(\log\frac{1}{r}\right)^2\right),
\end{equation}
\begin{equation}\label{eq5.18}
J_2(r)=O\left(\left(\log\frac{1}{r}\right)^2\right),
\end{equation}
and
\begin{equation}\label{eq5.19}
I_1(r)=o\left(\left(\log\frac{1}{r}\right)^2\right).
\end{equation}
\end{lem}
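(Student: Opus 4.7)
The plan is to mirror the proof of Lemma \ref{lem3.1} almost verbatim, with the single substitution of $\log(e/r)$ by $(\log(e/r))^2$ throughout. This is the only structural change, reflecting the extra logarithm that the $n=4$ fundamental solution \eqref{eq1.11} contributes to \eqref{eq5.13}, \eqref{eq5.16}, and \eqref{eq5.17}. The three conclusions are interlocked and will be derived in the order \eqref{eq5.17.5}, then \eqref{eq5.19}, then \eqref{eq5.18}.

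First I would derive the \emph{bootstrap inequality} by combining \eqref{eq5.17}, \eqref{eq5.16}, and \eqref{eq5.14}:
\[
J_1(r)\;\leq\;J_1(r)+J_2(r)\;\leq\;C\bigl[I_1(r)+I_2(r)+(\log(e/r))^2\bigr]\;=\;CI_1(r)+O\bigl((\log(1/r))^2\bigr)
\]
as $r\to 0^+$. If $S_1(0)=\emptyset$ then $I_1\equiv J_1\equiv 0$ and all three claimed estimates are trivial, so I may assume $S_1(0)\neq\emptyset$, whence $I_1(r)>0$ for small $r>0$.

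Next I would extract the matching lower bound for $J_1(r)/I_1(r)$. For $y\in S_1(r)$ one has $1<u(y)\leq M|y|^{-2}$, so \eqref{eq3.2} gives $f(u(y))\geq f(M|y|^{-2})$. Factoring $|y|^{\sigma-2}=M^{\sigma/2-1}(M|y|^{-2})^{1-\sigma/2}$ and splitting the infimum at $|y|=\sqrt{r}$ yields
\[
\frac{M^{2-\sigma/2}J_1(r)}{I_1(r)}\;\geq\;\min\!\Bigl\{\inf_{r<|y|<\sqrt{r}}(M|y|^{-2})^{1-\sigma/2}f(M|y|^{-2}),\;(\log(e/\sqrt{r}))^2\!\!\inf_{\sqrt{r}<|y|<1}(M|y|^{-2})^{1-\sigma/2}f(M|y|^{-2})\Bigr\}.
\]
By \eqref{eq1.4}, $s^{1-\sigma/2}f(s)\to\infty$ as $s\to\infty$, so the first infimum (over $s=M|y|^{-2}>M/r$) tends to $\infty$, while the second is bounded below by a positive constant uniformly in $r$ by the continuity and growth of $f$ on $[M,\infty)$; multiplication by $(\log(e/\sqrt{r}))^2$ then forces that branch to diverge as well. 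Hence $I_1(r)=o(J_1(r))$ as $r\to 0^+$.

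Substituting $I_1(r)=o(J_1(r))$ into the bootstrap inequality gives $J_1(r)\leq o(J_1(r))+O((\log(1/r))^2)$, which is \eqref{eq5.17.5}; then $I_1(r)=o(J_1(r))=o((\log(1/r))^2)$ is \eqref{eq5.19}; and finally \eqref{eq5.18} follows from the bootstrap inequality together with \eqref{eq5.19}. The main (mild) obstacle is the lower bound on the split infimum in the previous paragraph, specifically ensuring that the $\sqrt{r}<|y|<1$ branch is bounded below by a positive constant uniform in $r$, which is routine from the continuity of $f$ on $(0,\infty)$ combined with the divergence of $s^{1-\sigma/2}f(s)$.
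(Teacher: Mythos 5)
Your proposal is correct and follows the paper's proof of Lemma \ref{lem5.1} essentially line for line: the same bootstrap inequality \eqref{eq5.19.5}, the same trivial reduction when $S_1(0)=\emptyset$, the same rewriting of $|y|^{\sigma-2}$ as $M^{\sigma/2-1}(M|y|^{-2})^{1-\sigma/2}$, the same split of the infimum at $|y|=\sqrt{r}$, and the same chain \eqref{eq5.17.5} $\Rightarrow$ \eqref{eq5.19} $\Rightarrow$ \eqref{eq5.18}. The only cosmetic difference is that you spell out why the $\sqrt{r}<|y|<1$ branch is bounded below before the $(\log(e/\sqrt r))^2$ factor drives it to infinity, a point the paper leaves implicit.
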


\begin{proof}
By \eqref{eq5.17}, \eqref{eq5.16}, and \eqref{eq5.14} we 
have  
\begin{align}\notag
 J_1(r) &\leq J_1(r)+J_2(r) \leq C \left[ I_1(r)+I_2(r)
+\left(\log\frac{e}{r}\right)^2 \right]\\
 \label{eq5.19.5}
&=CI_1(r)+O\left(\left(\log\frac{1}{r}\right)^2\right)\quad\text{ as }\quad r\to 0^+.
\end{align}
If $S_1(0)=\emptyset$ then $I_1(r)\equiv J_1(r)\equiv 0$ for $0<r<1$
and thus \eqref{eq5.18} follows from \eqref{eq5.19.5}. Hence we can
assume $S_1(0)\not=\emptyset$. So for $r$ small and positive,
$S_1(r)\not=\emptyset$, $I_1(r)>0$, and 
\begin{align*}
 J_1(r)&\geq \left[ \inf_{y\in S_1(r)} \left(\log
   \frac{e|y|}{r}\right)^2|y|^{\sigma-2}f(u(y)) \right] 
\frac{I_1(r)}{M}\\
 &\geq \left[ \inf_{r<|y|<1}\left (\log\frac{e|y|}{r}\right)^2
(|y|^{-2})^{1-\sigma/2}f(M|y|^{-2})
  \right] 
\frac{I_1(r)}{M}
\end{align*}
by \eqref{eq3.2} and because $1< u(y)\leq M|y|^{-2}$ for
$y\in S_1(r)$.  Thus 
\begin{equation*}
 \frac{M^{2-\sigma/2} J_1 (r)}{I_1(r)} \geq \min \left\{\ \inf_{r<|y|<\sqrt{r}}
 (M|y|^{-2})^{1-\sigma/2}f(M|y|^{-2}),\ \left(\log\frac{e}{\sqrt{r}}\right)^2 
\inf_{\sqrt{r}<|y|<1} (M|y|^{-2})^{1-\sigma/2}f(M|y|^{-2}) \right\}\
 \to \infty 
\end{equation*}
as $r\to 0^+$ by \eqref{eq1.4}.  
Hence \eqref{eq5.19.5} implies \eqref{eq5.17.5}; and \eqref{eq5.17.5}
implies \eqref{eq5.19}. Finally, \eqref{eq5.18} follows from
\eqref{eq5.19.5} and \eqref{eq5.19}.
\end{proof}

Continuing with the proof of Theorem \ref{thm1.2}, it follows from
\eqref{eq5.14}, \eqref{eq5.19}, and \eqref{eq5.16} that there is a
constant $C>0$ such that for $0<r<1$ we have
\begin{align*}
 C&\geq \frac{1}{\left(\log\frac{e}{r}\right)^2} 
\int_{r<|y|<1} \left(\log\frac{e|y|}{r}\right)^2(-\Delta^2 u(y))\,dy\\
 &\geq \frac{1}{\left(\log\frac{e}{r}\right)^2} 
\int_{\sqrt{r}<|y|<1} \left(\log\frac{e}{\sqrt{r}}\right)^2(-\Delta^2 u(y))\,dy\\
 &\geq \frac{1}{4} \int_{\sqrt{r}<|y|<1} -\Delta^2 u(y)\,dy.
\end{align*}
Thus
\begin{equation}\label{eq5.20}
 \int_{|y|<1} -\Delta^2 u(y)\,dy<\infty.
\end{equation}

By \eqref{eq5.19} there exists a constant $C>0$ such that
$I_1(2^{-(j+1)}) \leq C(j+1)^2$ for $j=0,1,2,\ldots$.  Thus for each
$\vp>0$ we have
\begin{align*}
 M \int_{S_1(0)}|x|^{-6+\vp}\,dx&\leq
 \sum^{\infty}_{j=0}2^{-\vp j}M 
\int_{\substack{2^{-(j+1)}<|x|<2^{-j}\\ x\in S_1(0)}} |x|^{-6} \,dx\\
 &\leq \sum^{\infty}_{j=0} 2^{-\vp j} I_1 (2^{-(j+1)})\\
 &\leq C\sum^{\infty}_{j=0}(j+1)^2(2^{-\vp})^j <\infty.
\end{align*}
Hence, for $0<\vp \leq 1$,
\[
 \int_{|x|<1} |x|^{-4+\vp} u(x)\,dx 
\leq M \int_{S_1(0)} |x|^{-6+\vp} \,dx+\int_{|x|<1} |x|^{-4+\vp} \,dx
 <\infty
\]
and so taking $\vp=1$ we have for $0<r<1$ that
\begin{equation}\label{eq5.21}
 \int_{|x|<r} u(x)\,dx \leq \int_{|x|<r} \frac{r^3}{|x|^3} u(x)\,dx=o(r^3)
 \quad\text{as}\quad r\to 0^+.
\end{equation}

Let $F=\Delta^2 u$ and 
$$\hat{N}(x)=\int_{B_1(0)} 
\Phi(x-y)F(y)\,dy\quad \text{for}\quad x\in \mathbb{R}^4 .$$  
By \eqref{eq5.0} and \eqref{eq5.20} we have for $0<r\le 1$ that
\begin{align}
 \notag \int_{|x|<r} |\hat{N}(x)|\,dx 
 &= \int_{|y|<1} \left( \int_{|x|<r} A\log\frac{e}{|x-y|} \,dx \right) (-F(y))\,dy\\
 \notag &\leq \int_{|y|<1} \left( \int_{|x|<r} A\log\frac{e}{|x|} \,dx \right) (-F(y))\,dy\\
 &\le Cr^4\log\frac{e}{r} .\label{eq5.22}
\end{align}
In particular $\hat{N} \in L^{1}_{\text{loc}} (B_1(0))$.  
Also for $\varphi \in C^{\infty}_{0} (B_1(0))$ we have
\begin{align*}
 \int_{B_1(0)} \hat{N} \Delta^2 \varphi \,dx &= \int_{B_1(0)} \left( \int_{B_1(0)} \Phi(x-y)\Delta^2 \varphi (x)\,dx \right) F(y)\,dy\\
 &=\int_{B_1(0)} \varphi (y)F(y)\,dy.
\end{align*}
Thus $\Delta^2 \hat{N}=F$ in ${\cal D\,}'(B_1(0))$.

Let $v=u-\hat{N}$.  By \eqref{eq5.21} and \eqref{eq5.22} we see that
$v$ satifies \eqref{eq2.3}.
Since $\Delta^2 u=F$ in ${\cal D\,}' (B_1(0) \setminus \{0\})$ we have
$\Delta^2 v=\Delta^2 u-\Delta^2 \hat{N}=0$ in ${\cal D\,}' (B_1(0) \setminus
\{0\})$.  
Thus Lemma \ref{lem2.5} implies for some constant $a$ and some
$C^\infty$ solution $H$ of $\Delta^2
H=0$ in $B_1(0)$ we have
\begin{equation}\label{eq5.24}
 u=\hat{N}+a\Phi+H \quad \text{in}\quad B_1 (0)\setminus \{0\}.
\end{equation}
Hence, since $\hat{N}<0$ and $u>0$ we have $a\geq 0$.
\medskip

\noindent \underline{Case I.} Suppose the constant $a$ in
\eqref{eq5.24} is positive.  By \eqref{eq5.0} we have
\begin{equation*}
 \frac{\rho^{-4}}{A|\partial B_1 |} \int_{|x|=\rho} \Phi(x-y)\,dS_x \leq 
 \begin{cases}
  \rho^{-1}\log\frac{e}{\rho}, &\text{if }|y|<\rho\\
   \rho^{-1}\log\frac{e}{|y|}, &\text{if }|y|>\rho.
 \end{cases}
\end{equation*}
Thus for $0<r<1$ we have
\begin{align*}
 \frac{1}{A|\partial B_1 |} \int_{r<|x|<1} |x|^{-4} &(-\hat{N}(x))\,dx 
 = \int_{|y|<1} \int^{1}_{r}\frac{\rho^{-4}}{A|\partial B_1 |} 
 \int_{|x|=\rho} \Phi(x-y)\,dS_x \,d\rho(-\Delta^2 u(y))\,dy\\
&\leq \int_{r<|y|<1} \left( \int^{|y|}_{r} \rho^{-1}
 \log\frac{e}{|y|}\,d\rho
 +2\int^{1}_{|y|} \rho^{-1}\log\frac{e}{\rho}\,d\rho \right) (-\Delta^2 u(y))\,dy\\
&+ \int_{|y|<r} \left( \int^{1}_{r} 
 \rho^{-1}\log\frac{e}{\rho} \,d\rho \right)(-\Delta^2 u(y))\,dy\\
&\leq\left(\log\frac{e}{r}\right) \int_{r<|y|<1} \left(\log\frac{e}{|y|}\right)(-\Delta^2 u(y))\,dy
 +\left(\log\frac{e}{r}\right)^2 \int_{|y|<r} -\Delta^2 u(y)\,dy\\
&=o\left(\left(\log\frac{e}{r}\right)^2\right)\quad \text{as}\quad r\to 0^{+}
\end{align*}
by \eqref{eq5.20} and the fact that
\begin{align*}
 \int_{r<|y|<1} \left(\log\frac{e}{|y|}\right)(-\Delta^2 u(y))\,dy 
&\leq\left(\log\frac{e}{r}\right) \int_{r<|y|<(\log\frac{1}{r})^{-1}} -\Delta^2 u(y)\,dy\\
 &+\left(\log\left(e \log\frac{1}{r}\right)\right) \int_{|y|<1} -\Delta^2 u(y)\,dy.
\end{align*}
Hence by \eqref{eq5.24}
\begin{align*}
 \int_{r<|x|<1} |x|^{-4} u(x)\,dx &=a \int_{r<|x|<1} 
|x|^{-4}A\log\frac{e}{|x|}\,dx-o\left(\left(\log\frac{1}{r}\right)^2\right)\\
 &\geq C\left(\log\frac{1}{r}\right)^2 \quad \text{for small }r>0
\end{align*}
where $C$ is a positive constant.  Thus by \eqref{eq5.15}
and \eqref{eq5.19} we have
\begin{equation}\label{eq5.25}
 I_2(r)\geq C\left(\log\frac{1}{r}\right)^2\quad \text{for small }r>0.
\end{equation}

On the other hand, by H\"older's inequality and \eqref{eq5.18},
\begin{align*}
 I_2(r)&=\int_{S_2(r)} |x|^{\frac{2\sigma-8}{6-\sigma}}(|x|^{\frac{2\sigma-16}{6-\sigma}} u(x))\,dx\\
 &\leq \left( \int_{0<|x|<1} |x|^{-2} \,dx
 \right)^{\frac{4-\sigma}{6-\sigma}} 
J_2 (r)^{\frac{2}{6-\sigma}}\\
 &=O\left(\left(\log\frac{1}{r}\right)^{\frac{4}{6-\sigma}}\right)\quad\text{as}\quad r\to 0^{+}
\end{align*}
which contradicts \eqref{eq5.25} and completes the proof of Theorem
\ref{thm1.2} in Case I.
\medskip

\noindent \underline{Case II.} Suppose the constant $a$ in
\eqref{eq5.24} is zero.  Then
\begin{equation}\label{eq5.26}
 0<u=\hat{N}+H \quad \text{for}\quad 0<|x|<1.
\end{equation}
Thus $-\hat{N}$ and $u$ are positive and bounded for $0<|x|\le 1/2$
and so \eqref{eq3.3} implies
\begin{equation}\label{eq5.26.5}
-\Delta^2 u\ge C|x|^{\sigma-8} u^{3-\sigma/2} \quad\text{in}
\quad \overline{B_1(0)}\setminus\{0\}
\end{equation}
for some positive constant $C$. 
Also, since \eqref{eq5.0} implies
\begin{align}\label{eq5.27}
 -\frac{1}{A}\bar{\hat{N}}(r): &=\frac{1}{A|\partial B_r|} 
\int_{|x|=r} -\hat{N} (x)\,dS_x =\int_{|y|<1} \frac{1}{|\partial B_r|} 
\int_{|x|=r} 
\log\frac{e}{|x-y|}\,dS_x\,
(-F(y))\,dy\\
 \notag &\geq \frac{3}{4} \int_{r<|y|<1} \left(\log\frac{e}{|y|}\right)
 (-F(y))\,dy\quad\text{for} \quad 0<r<1/2
\end{align}
we see that
\begin{equation}\label{eq5.28}
 -\hat{N}_0 :=\int_{|y|<1} A\left(\log\frac{e}{|y|}\right) (-F(y))\,dy<\infty.
\end{equation}
Averaging \eqref{eq5.26} we obtain
\begin{equation}\label{eq5.29}
 0<\bar{u}(r)=\overline{(\hat{N}-\hat{N}_0 )}(r)+a_0 -a_1 Ar^2 
\quad \text{for}\quad 0<r<1
\end{equation}
for some constants $a_0$ and $a_1$.

By \eqref{eq5.27}, \eqref{eq5.28} and \eqref{eq5.0} we have
\begin{align}
 \notag &\frac{1}{A}\overline{(\hat{N}-\hat{N}_0 )}(r)\\
 \notag &= \int_{|y|<1} \left(\frac{1}{|\partial B_r |} 
\int_{|x|=r}  
\log\frac{e}{|x-y|}\,dS_x-\log\frac{e}{|y|} \right) F(y)\,dy\\
 \label{eq5.30} &=\int_{|y|<r} 
\left(\log\frac{e}{r}-\log\frac{e}{|y|}-\frac{1}{4}\left(\frac{|y|}{r}\right)^2\right)
F(y)\,dy
-\int_{r<|y|<1} 
\frac{1}{4}r^2 |y|^{-2} 
F(y)\,dy\\
 \notag &=-\int_{|y|<r} 
\left(\log\frac{e}{|y|}\right)
\left[1-\frac{\log\frac{e}{r}}{\log\frac{e}{|y|}}
+\frac{1}{4}\frac{\left(\frac{|y|}{r}\right)^2}{\log\frac{e}{|y|}}\right]
F(y)\,dy\\
 \notag &-\int_{r<|y|<\sqrt{r}} 
\frac{1}{4}\frac{\left(\frac{r}{|y|}\right)^2}{\log\frac{e}{|y|}}
\left(\log\frac{e}{|y|}\right)
F(y)\,dy
-\int_{\sqrt{r}<|y|<1} 
\frac{1}{4}\frac{\left(\frac{r}{|y|}\right)^2}{\log\frac{e}{|y|}}
\left(\log\frac{e}{|y|}\right)
F(y)\,dy\\
 \notag &\to0 \quad \text{as}\quad r\to 0^{+}
\end{align}
by \eqref{eq5.28}.  Thus by \eqref{eq5.29} we have $a_0 \geq 0$.  
If $a_0>0$ then by \eqref{eq5.26.5} we have
\begin{align*}
 -\bar{F}(r)&=-\overline{\Delta^2 u}(r)\geq Cr^{\sigma-8} \bar{u}(r)^{3-\sigma/2}\\
 &\geq Cr^{\sigma-8} \left(\frac{a_0}{2}\right)^{3-\sigma/2}
\end{align*}
for $r$ small and positive and thus for small $r_0>0$ we have
\begin{align*}
 \int_{|y|<r_0} \left(\log\frac{e}{|y|}\right)(-F(y))\,dy 
&= \int^{r_0}_{0} \left(\log\frac{e}{r}\right)\int_{|y|=r} -F(y)\,dS_y \,dr\\
 &=|\partial B_1|\int^{r_0}_{0} r^3\left(\log\frac{e}{r}\right) (-\bar{F}(r))\,dr=\infty
\end{align*}
which contradicts \eqref{eq5.28}.  So $a_0=0,\bar{u}(r)\to 0$ as $r\to0$ 
and by \eqref{eq5.29} and \eqref{eq5.30} we have
\begin{align}
 \notag \frac{\bar{u}(r)}{A} 
&= \frac{\overline{(\hat{N}-\hat{N_0})}(r)}{A} -a_1 r^2\\
 \label{eq5.31}&= \int_{|y|<r}
\left(\log\frac{e}{|y|}\right)
\left[1-\frac{\log\frac{e}{r}}{\log\frac{e}{|y|}}
+\frac{1}{4}\frac{\left(\frac{|y|}{r}\right)^2}{\log\frac{e}{|y|}}\right]
(-\Delta^2 u(y))\,dy+(J(r)-a_1 )r^2
\end{align}
where
\begin{equation*}
 J(r):= \int_{r<|y|<1} \frac{1}{4}|y|^{-2} (-\Delta^2 u(y))\,dy 
\quad \text{for}\quad 0\leq r<1.
\end{equation*}
($J(0)$ may be $\infty$.)
\medskip

\noindent \underline{Case II(a).} Suppose $a_1 <J(0)$.  Then there
exists $\vp>0$ and $r_0 \in (0,1)$ such that $a_1 \leq
(1-\vp)J(r)$ for $0<r\leq r_0$.  Thus by \eqref{eq5.31},
$\frac{\bar{u}(r)}{A} \geq \vp r^2 J(r)$ for $0<r<r_0$, which together
with \eqref{eq5.26.5} and Lemma \ref{lem2.6} gives a contradiction and
thereby proves Theorem \ref{thm1.2} when $n=4$ in Case II(a).
\medskip

\noindent\underline{Case II(b).} Suppose $a_1 \geq J(0)$.  Then for
$0<r<1$ we have
\begin{equation*}
 (J(r)-a_1)r^2 \leq (J(r)-J(0))r^2 =-\int_{|y|<r} 
\frac{1}{4} r^2 |y|^{-2} (-\Delta^2 u(y))\,dy
\end{equation*}
and hence by \eqref{eq5.31} we have
\begin{align}
 \notag 0<\frac{\bar{u}(r)}{A} &\leq \int_{|y|<r} 
\left(\log\frac{r}{|y|}+\frac{1}{4}\left(\frac{|y|}{r}\right)^2
-\frac{1}{4}\left(\frac{r}{|y|}\right)^2\right)
(-\Delta^2 u(y))\,dy\\
 \label{eq5.32}&=-\int_{|y|<r} |y|^2 r^{-2}
 p\left(\frac{r}{|y|}\right)(-\Delta^2 u(y))\,dy
\end{align}
where $p(t):=\frac{1}{4}t^4-t^2\log t-\frac{1}{4}$.  
Since $p(1)=p'(1)=p''(1)=0$ and
\[
 p'''(t) =6t^{-1}\left(t^2-\frac{1}{3}\right)>0
\quad\text{for}\quad t\geq 1
\]
we see that $p(t)>0$ for $t>1$.  This contradicts \eqref{eq5.32} 
and completes the proof of Theorem \ref{thm1.2} when $n=4$ in
all cases.
\end{proof}

\section{Completion of the Proof of Theorem \ref{thm1.2} when $n= 3$} 
\label{sec6}

When $n=3$, we complete in this section the proof of Theorem
\ref{thm1.2} which we began in Section \ref{sec3}.

\begin{proof}[Completion of the proof of Theorem \ref{thm1.2} when
  $n=3$] 
For $x\in \mathbb R^3$ we see by Lemma \ref{lem2.1} that
\begin{equation}\label{eq6.0}
\frac{1}{|\partial B_r |} \int_{|y|=r} |x-y|\,dS_y=
\begin{cases}
   r+\frac{1}{3} r^{-1} |x|^2 , &\text{if } |x|<r \\
   |x|+\frac{1}{3}r^2 |x|^{-1}, &\text{if } |x|>r.
  \end{cases}
 \end{equation}
It therefore follows from equations \eqref{eq1.12} and \eqref{eq3.5} 
that for $r>0$ we have
\begin{equation}\label{eq6.10.5}
 \frac{-1}{A|\partial B_r |} \int_{|x|=r} \Psi (x,y)\,dS_x=
 \begin{cases}
  \frac{1}{3}r^{-1} |y|^2 , &\text{if }|y|<r \\
  |y| p\left(\frac{r}{|y|}\right), &\text{if }|y|\geq r
 \end{cases}
\end{equation}
where $p(t):=1-t+\frac{1}{3}t^2$ is bounded between
positive constants for $0\leq t \leq 1$.  Hence
\[
 -\int_{|x|=r} \Psi(x,y)\,dS_x \sim 
 \begin{cases}
  2r|y|^2, &\text{if }|y|<r\\
  r^2|y|, &\text{if }|y|\geq r
 \end{cases}
\quad\text{ for }\quad (r,y) \in (0,\infty) \times \mathbb{R}^3 .
\]
Thus by \eqref{eq3.9}, for $0<r\leq
\frac{1}{4}$, we have
\begin{align}
 \notag \int_{r<|x|<1} |x|^{-4} N(x)\,dx =& \int_{|y|<1} \int^{1}_{r} 
\rho^{-4} \int_{|x|=\rho} -\Psi(x,y)\,dS_x \,d\rho (-\Delta^2 u(y))\,dy\\
\notag \sim& \int_{r<|y|<1} \left( \int^{1}_{|y|} 2\rho^{-3} |y|^2\,d\rho +
\int^{|y|}_{r} \rho^{-2}|y| \,d\rho \right) (-\Delta^2 u(y))\,dy\\
\notag &+ \int_{|y|<r}\left(\int_r^12\rho^{-3}|y|^2\,d\rho\right)(-\Delta^2 u(y))\,dy\\
\sim& \int_{r<|y|<1} \left(\frac{|y|}{r}\right)(-\Delta^2
u(y))\,dy+g(r)
\label{eq6.11}
\end{align}
where
\begin{equation}\label{eq6.12}
 0<g(r):=\left(\frac{1}{r^2}-1\right)\int_{|y|<r} |y|^2 (-\Delta^2
 u(y))\,dy=o\left(\frac{1}{r^2}\right) \quad
\text{as}\quad r\to0^{+}
\end{equation}
by \eqref{eq3.7}.

For $0<r<1$ we have
\begin{align*}
 \int_{r<|y|<1} \left(\frac{|y|}{r}\right)&(-\Delta^2
  u(y))\,dy=\int_{r<|y|<1} \frac{1}{r|y|}|y|^2 (-\Delta^2 u(y))\,dy\\
 &\leq \frac{1}{r^2}\int_{r<|y|<\sqrt{r}} |y|^2(-\Delta^2 u(y))\,dy+
\frac{1}{r^{3/2}}\int_{\sqrt{r}<|y|<1} |y|^2 (-\Delta^2 u(y))\,dy\\
 &\leq \frac{1}{r^2} \int_{r<|y|<\sqrt{r}} |y|^2(-\Delta^2 u(y))\,dy
+\frac{1}{r^{3/2}}\int_{|y|<1} |y|^2 (-\Delta^2 u(y))\,dy\\
 &=o(r^{-2})\quad \text{as}\quad r\to 0^{+}
\end{align*}
by \eqref{eq3.7}. It follows therefore from \eqref{eq6.11} and
\eqref{eq6.12} that
$$\int_{r<|x|<1} |x|^{-4}N(x)\,dx=o(r^{-2})\quad \text{as}\quad r\to0^{+}.$$ 
Hence, by \eqref{eq3.10} and the positivity of $u$ we see
that the constant $c_4$ in \eqref{eq3.10} is nonnegative and thus by
\eqref{eq3.10}, \eqref{eq6.11}, and the positivity of $g$ we have
\begin{equation}\label{eq6.13}
 \int_{r<|x|<1} |x|^{-4} u(x)\,dx \geq \frac{1}{C} \int_{r<|y|<1} \left(
 \frac{|y|}{r}\right)(-\Delta^2 u(y))\,dy-C\frac{1}{r}\quad \text{for}
\quad 0<r<\frac{1}{4}
\end{equation}
where $C$ is a positive constant independent of $r$.  

By \eqref{eq3.6} there exists a constant $M>1$ such that $0<u(x)\leq
M|x|^{-1}$ for $0<|x|\leq 1$. Define $I_1, I_2 :(0,1)\to[0,\infty)$
by
$$I_1(r):=M\int_{x\in S_1(r)} |x|^{-5}\,dx\quad \text{and}\quad I_2(r):=\int_{x\in S_2 (r)} |x|^{-4}u(x)\,dx$$ where 
\[S_1(r):=\{ x\in \mathbb{R}^3 :r<|x|<1 \text{ and }|x|
<u(x)\leq M|x|^{-1} \}\] 
and 
\[S_2(r):=\{ x\in \mathbb{R}^3 :r<|x|<1\text{ and }0<u(x)\leq |x| \}.\]  Then
$S_1(r) \cup S_2(r)=B_1(0)-\overline{B_r(0)}$,
\begin{equation}\label{eq6.14}
 I_2(r)=O\left(\log\frac{1}{r}\right)\quad \text{as}\quad r\to0^{+} ,
\end{equation}
and for $0<r<\frac{1}{4}$ we have
\begin{align}\label{eq6.15}
 I_1(r)+I_2(r) &\geq \int_{r<|x|<1} |x|^{-4} u(x)\,dx\\
&\geq \frac{1}{C} \int_{r<|y|<1} \left(\frac{|y|}{r}\right)(-\Delta^2
u(y))\,dy-C\frac{1}{r}
\label{eq6.16}\end{align}
by \eqref{eq6.13}.

By \eqref{eq3.3} we have
\begin{equation}\label{eq6.17}
 \int_{r<|y|<1} \left(\frac{|y|}{r}\right)(-\Delta^2 u(y))\,dy\geq J_1(r)+J_2(r)\quad
\text{for}\quad 0<r<1
\end{equation}
where 
\[J_2(r):=\int_{S_2(r)}|y|^{2\sigma-12}u(y)^{5-\sigma}\,dy\] 
and 
\[
J_1(r):= \int_{S_1(r)} \left(\frac{|y|}{r}\right)|y|^{\sigma-7} f(|y|^{-1} u(y))\,dy.
\]

Before continuing with the proof of Theorem \ref{thm1.2}, we prove the
following lemma.

\begin{lem}\label{lem6.1}
As $r\to 0^+$ we have
\begin{equation}\label{eq6.17.5}
J_1(r)=O\left(\frac{1}{r}\right),
\end{equation}
\begin{equation}\label{eq6.18}
J_2(r)=O\left(\frac{1}{r}\right),
\end{equation}
and
\begin{equation}\label{eq6.19}
I_1(r)=o\left(\frac{1}{r}\right).
\end{equation}
\end{lem}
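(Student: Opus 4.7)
The plan is to mimic the proofs of Lemmas \ref{lem3.1} and \ref{lem5.1}, replacing the $\log(1/r)$ scale with the $1/r$ scale appropriate to $n=3$. First I would combine \eqref{eq6.17}, \eqref{eq6.16}, and \eqref{eq6.14} to obtain
\[
J_1(r)\le J_1(r)+J_2(r)\le C\Bigl[I_1(r)+I_2(r)+\tfrac{1}{r}\Bigr]=CI_1(r)+O\bigl(\tfrac{1}{r}\bigr)\quad\text{as } r\to 0^+,
\]
where I use that $I_2(r)=O(\log(1/r))=O(1/r)$. If $S_1(0)=\emptyset$ then $I_1\equiv J_1\equiv 0$, giving \eqref{eq6.17.5} and \eqref{eq6.19}; and then \eqref{eq6.18} follows from the displayed inequality. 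So I may assume $S_1(0)\ne\emptyset$, so that for $r$ small and positive $I_1(r)>0$.

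Next I would prove the key lower bound $J_1(r)/I_1(r)\to\infty$ as $r\to 0^+$. For $y\in S_1(r)$ we have $1<|y|^{-1}u(y)\le M|y|^{-2}$, so by the monotonicity \eqref{eq3.2},
\[
f(|y|^{-1}u(y))\ge f(M|y|^{-2}).
\]
Writing $|y|^{\sigma-2}=(|y|^{-2})^{1-\sigma/2}$, dividing $J_1(r)$ by $I_1(r)/M$ and factoring out the $y$-independent parts gives
\[
\frac{M^{2-\sigma/2}J_1(r)}{I_1(r)}\ge \inf_{y\in S_1(r)}\,\frac{|y|}{r}\bigl(M|y|^{-2}\bigr)^{1-\sigma/2}f(M|y|^{-2}).
\]
I would split the infimum into the two regimes $r<|y|<\sqrt{r}$ and $\sqrt{r}<|y|<1$: in the first, $|y|/r\ge 1$ and $M|y|^{-2}\ge M/r\to\infty$, so hypothesis \eqref{eq1.4} forces $(M|y|^{-2})^{1-\sigma/2}f(M|y|^{-2})\to\infty$; in the second, $|y|/r\ge 1/\sqrt{r}\to\infty$ while $(M|y|^{-2})^{1-\sigma/2}f(M|y|^{-2})$ is bounded below by a positive constant by continuity. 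Either way the infimum tends to infinity.

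Combining $J_1(r)\le CI_1(r)+O(1/r)$ with $I_1(r)=o(J_1(r))$ then yields $J_1(r)=O(1/r)$, which is \eqref{eq6.17.5}. Substituting back gives $I_1(r)=o(1/r)$, which is \eqref{eq6.19}. Finally, plugging these into $J_1(r)+J_2(r)\le CI_1(r)+O(1/r)$ yields $J_2(r)=O(1/r)$, which is \eqref{eq6.18}. The only step requiring any real work is the two-regime infimum bound, but it is essentially forced by the shape of the growth condition at infinity in \eqref{eq1.4}; no new idea beyond those already used in Lemmas \ref{lem3.1} and \ref{lem5.1} seems to be required.
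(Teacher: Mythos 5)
Your proposal is correct and follows essentially the same route as the paper: the same chain $J_1\le CI_1+O(1/r)$ from \eqref{eq6.17}, \eqref{eq6.16}, \eqref{eq6.14}, the same use of \eqref{eq3.2} with $1<|y|^{-1}u(y)\le M|y|^{-2}$ to get $M^{2-\sigma/2}J_1(r)/I_1(r)\ge \inf_{y\in S_1(r)}\frac{|y|}{r}(M|y|^{-2})^{1-\sigma/2}f(M|y|^{-2})$, and the same split at $|y|=\sqrt r$ driven by \eqref{eq1.4}. The concluding bookkeeping (deduce \eqref{eq6.17.5}, then \eqref{eq6.19}, then \eqref{eq6.18}) also matches the paper's argument.
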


\begin{proof}
By \eqref{eq6.17}, \eqref{eq6.16}, and \eqref{eq6.14} we 
have  
\begin{align}\notag
 J_1(r) &\leq J_1(r)+J_2(r) \leq C \left[ I_1(r)+I_2(r)+\frac{1}{r} \right]\\
 \label{eq6.19.5}
&=CI_1(r)+O\left(\frac{1}{r}\right)\quad\text{ as }\quad r\to 0^+.
\end{align}
If $S_1(0)=\emptyset$ then $I_1(r)\equiv J_1(r)\equiv 0$ for $0<r<1$
and thus \eqref{eq6.18} follows from \eqref{eq6.19.5}. Hence we can
assume $S_1(0)\not=\emptyset$. So for $r$ small and positive,
$S_1(r)\not=\emptyset$, $I_1(r)>0$, and 
\begin{align*}
 J_1(r)&\geq \left[ \inf_{y\in S_1(r)} \left(
   \frac{|y|}{r}\right)|y|^{\sigma-2}f(|y|^{-1} u(y)) \right] 
\frac{I_1(r)}{M}\\
 &\geq \left[ \inf_{r<|y|<1}
   \left(\frac{|y|}{r}\right)(|y|^{-2})^{1-\sigma/2} 
f(M|y|^{-2})
  \right] 
\frac{I_1(r)}{M}
\end{align*}
by \eqref{eq3.2} and because $1< |y|^{-1} u(y)\leq M|y|^{-2}$ for
$y\in S_1(r)$.  Thus 
\begin{equation*}
 \frac{M^{2-\sigma/2} J_1 (r)}{I_1(r)} \geq \min \left\{\ \inf_{r<|y|<\sqrt{r}}
 (M|y|^{-2})^{1-\sigma/2}f(M|y|^{-2}),\ \left(\frac{1}{\sqrt{r}}\right) 
\inf_{\sqrt{r}<|y|<1} (M|y|^{-2})^{1-\sigma/2}f(M|y|^{-2}) \right\}\
 \to \infty 
\end{equation*}
as $r\to 0^+$ by \eqref{eq1.4}.  
Hence \eqref{eq6.19.5} implies \eqref{eq6.17.5}; and \eqref{eq6.17.5}
implies \eqref{eq6.19}. Finally, \eqref{eq6.18} follows from
\eqref{eq6.19.5} and \eqref{eq6.19}.
\end{proof}

Continuing with the proof of Theorem \ref{thm1.2}, it follows from
\eqref{eq6.14}, \eqref{eq6.19}, and \eqref{eq6.16} that 
\begin{equation}\label{eq6.20}
 \int_{|y|<1} |y|(-\Delta^2 u(y))\,dy<\infty.
\end{equation}

Since, by \eqref{eq6.20}, 
\eqref{eq3.3}, and \eqref{eq3.2}, 
\begin{align*}
\infty>\int_{|x|<1}|x|(-\Delta^2u(x))\,dx
&\ge \int_{S_1(0)} |x||x|^{\sigma-7}f(|x|^{-1}u(x))\,dx\\
&\ge \frac{1}{M^{1-\sigma/2}}\int_{S_1(0)}|x|^{-4}(M|x|^{-2})^{1-\sigma/2}f(M|x|^{-2})\,dx\\
&\ge \frac{1}{M^{1-\sigma/2}}
\left(\min_{t\ge M}t^{1-\sigma/2}f(t)\right)\int_{S_1(0)}|x|^{-4}\,dx,
\end{align*}
it follows from \eqref{eq1.4} that $\int_{S_1(0)}|x|^{-4}\,dx<\infty$. Hence
\begin{equation}\label{eq1}
\int_{|x|<1}|x|^{-3}u(x)\,dx\le M\int_{S_1(0)}|x|^{-4}\,dx +
\int_{B_1(0)\setminus S_1(0)} |x|^{-2}\,dx<\infty.
\end{equation}
Thus
by Lemma \ref{lem2.7} we have
\begin{equation}\label{eq6.21}
 \int_{|y|<1} -\Delta^2 u(y)\,dy<\infty.
\end{equation}
Equation \eqref{eq1} also implies
\begin{equation}\label{eq6.21.5}
 \int_{|x|<r} u(x)\,dx\le r^3\int_{|x|<r}|x|^{-3}u(x)\,dx=o(r^3)
 \quad\text{as}\quad r\to 0^+.
\end{equation}

Let $F=\Delta^2 u$ and 
$$\hat{N}(x)=\int_{B_1(0)} 
\Phi(x-y)F(y)\,dy\quad \text{for}\quad x\in \mathbb{R}^3 .$$  
It follows from \eqref{eq1.12} and \eqref{eq6.21} that $\hat N\in
C^1(\mathbb{R}^3)$. In particular
\begin{equation}\label{eq6.22}
U(x):=\hat N(x)-\hat N(0)-D\hat N(0)x=o(|x|) \quad\text{as $x\to 0$.}
\end{equation}
Also for $\varphi \in C^{\infty}_{0} (B_1(0))$ we have
\begin{align*}
 \int_{B_1(0)} \hat{N} \Delta^2 \varphi \,dx &= \int_{B_1(0)} \left( \int_{B_1(0)} \Phi(x-y)\Delta^2 \varphi (x)\,dx \right) F(y)\,dy\\
 &=\int_{B_1(0)} \varphi (y)F(y)\,dy.
\end{align*}
Thus $F=\Delta^2 \hat{N}=\Delta^2 U$ in ${\cal D\,}'(B_1(0))$.

Let $v=u-U$.  By \eqref{eq6.21.5} and \eqref{eq6.22} we see that $v$
satisfies \eqref{eq2.3}.
Since $\Delta^2 u=F$ in ${\cal D\,}' (B_1(0) \setminus \{0\})$ we have
$\Delta^2 v=\Delta^2 u-\Delta^2 U=0$ in ${\cal D\,}' (B_1(0) \setminus
\{0\})$.  
Thus Lemma \ref{lem2.5} implies for some constant $b_0$ and some 
$C^\infty$ solution $H$ of $\Delta^2 H=0$ in $B_1(0)$ we have 
\begin{align}\label{eq6.24}
 u&=U(x)+b_0|x|+H(x)\quad \text{in}\quad B_1 (0)\setminus \{0\}\\
  &=b_0|x|+H(x)+o(|x|) \quad\text{as $x\to 0$}\label{eq6.25}
\end{align}
by \eqref{eq6.22}. 
Hence the positivity of $u$ implies $H(0)\ge 0$. If $H(0)>0$ then by
\eqref{eq6.25} we have $u(x)>\vp>0$ for $|x|$ small and positive which
contradicts \eqref{eq6.21.5}. Thus $H(0)=0$. Hence \eqref{eq6.25}
implies 
\begin{equation}\label{eq6.29}
u(x)=O(|x|)\quad \text{as $x\to 0$}
\end{equation}
and the biharmonicity of $H$ implies
\begin{equation}\label{eq6.28}
\bar H(r)=-b_1Ar^2 \quad \text{for some constant $b_1$}
\end{equation}
where $\bar H$ is the average of $H$ on the sphere $|x|=r$.
By \eqref{eq6.29} and \eqref{eq3.3} we see for some positive constant
$C$ that
\begin{equation}\label{eq6.29.1}
-\Delta^2u(x)\ge C|x|^{2\sigma-12}u(x)^{5-\sigma} \quad\text{for}\quad 0<|x|\le 1.
\end{equation}
Averaging \eqref{eq6.25} we find by \eqref{eq6.28} that
\[
0<\bar u(r)=b_0r+o(r) \quad\text{as $r\to 0$.}
\]
Hence $b_0 \ge 0$.  If $b_0>0$ then 
for some constant $C>0$ we have $\bar u(r)\ge Cr$ for $0<r\le 1$ and
thus averaging \eqref{eq6.29.1} we get $-\overline{\Delta^2u}(r)\ge
Cr^{\sigma-7}\ge Cr^{-5}$ for $0<r\le 1$ which contradicts \eqref{eq6.21}. Hence $b_0=0$
and so averaging \eqref{eq6.24} and using \eqref{eq6.28} we get
\begin{equation}\label{eq6.29.5}
\frac{\bar u(r)}{A}=\frac{\bar U(r)}{A}-b_1r^2\quad \text{for $0<r\le 1$.}
\end{equation}
It follows from \eqref{eq6.22} and \eqref{eq6.0} that for $0<r<1$ we have
\begin{align}
\notag\frac{1}{A}\bar U(r)&=\frac{1}{A}\overline{(\hat{N}-\hat{N}(0))}(r)\\
 \notag &= \int_{|y|<1} \left( \frac{1}{|\partial B_r |} 
 \int_{|x|=r} |x-y|\,dS_x -|y| \right) (-\Delta^2u(y))\,dy\\ 
\notag 
 &=\int_{|y|<r} \left(r -|y| +
 \frac{1}{3} r^{-1}|y|^2\right)(-\Delta^2u(y))\,dy
+\int_{r<|y|<1} \frac{1}{3}r^2 |y|^{-1} (-\Delta^2u(y))\,dy.
\end{align} 
Hence by \eqref{eq6.29.5},
\begin{equation}\label{eq6.31}
 \frac{\bar{u}(r)}{A} 
= \int_{|y|<r} r\left[1-\frac{|y|}{r} 
+\frac{1}{3} \left(\frac{|y|}{r}\right)^2 \right] 
(-\Delta^2 u(y))\,dy+(J(r)-b_1 )r^2
\end{equation}
where
\begin{equation*}
 J(r):= \int_{r<|y|<1} \frac{1}{3} |y|^{-1} (-\Delta^2 u(y))\,dy 
\quad \text{for}\quad 0\le r< 1.
\end{equation*}
($J(0)$ may be $\infty$.)
\medskip

\noindent \underline{Case I.} Suppose $b_1 <J(0)$.  Then there
exists $\vp>0$ and $r_0 \in (0,1)$ such that $b_1 \leq
(1-\vp)J(r)$ for $0<r\leq r_0$.  Thus by \eqref{eq6.31},
$\frac{\bar{u}(r)}{A} \geq \vp r^2 J(r)$ for $0<r<r_0$, which together
with \eqref{eq6.29.1} and Lemma \ref{lem2.6} gives a contradiction and
thereby proves Theorem \ref{thm1.2} when $n=3$ in Case I.
\medskip

\noindent\underline{Case II.} Suppose $b_1 \geq J(0)$.  Then for
$0<r<1$ we have
\begin{equation*}
 (J(r)-b_1)r^2 \leq (J(r)-J(0))r^2 =-\int_{|y|<r} 
\frac{1}{3} r^2 |y|^{-1} (-\Delta^2 u(y))\,dy
\end{equation*}
and hence by \eqref{eq6.31} we have
\begin{align*}
 0<\frac{\bar{u}(r)}{A} 
&\leq \int_{|y|<r} r\left[ 1-\frac{|y|}{r} 
+\frac{1}{3} \left(\left(\frac{|y|}{r}\right)^2 
-\frac{r}{|y|}\right)\right] (-\Delta^2 u(y))\,dy\\
 \label{eq6.32}
&=-\frac{1}{3}\int_{|y|<r} |y|^2 r^{-1} 
\left(\frac{r}{|y|}-1\right)^3(-\Delta^2 u(y))\,dy.
\end{align*}
 This contradiction 
completes the proof of Theorem \ref{thm1.2} in
all cases.
\end{proof}

\section{Proof of Theorem \ref{thm1.3}} \label{sec7}
In this section we prove Theorem \ref{thm1.3}.

\begin{proof}[Proof of Theorem \ref{thm1.3}.]
Suppose for contradiction
that $v(y)$ is a $C^4$ positive solution of \eqref{eq1.3} in
$\mathbb{R}^2\setminus\Omega$.  By scaling $v$, we can assume
$\Omega=\overline{B_{1/2}(0)}$. 
Let $u(x)=|y|^{-2} v(y)$, $y=\frac{x}{|x|^2}$ be the $2$-Kelvin
transform of $v(y)$.  Then
\begin{equation}\label{eq7.1}
v(y)=|x|^{-2} u(x)\quad\text{and} \quad \Delta^2 v(y)=|x|^6 \Delta^2 u(x). 
\end{equation}
It follows therefore from \eqref{eq1.3}
that $u(x)$ is a $C^4$ positive solution of
\begin{equation}\label{eq7.2}
 -\Delta^2 u(x)\geq |x|^{\sigma-6} f(|x|^{-2}u(x)) 
\quad\text{in }B_2 (0) \setminus \{0\}.
\end{equation}
Choose $s_0>1$ and so large that the function
\[
g(s):=
\begin{cases}\frac{(\log s)^{1-\sigma/2}}{s^{1-\sigma/2}\prod_{i=2}^k\log^is}&\text{if $s\ge s_0$}\\
g(s_0)&\text{if $0<s<s_0$}
\end{cases}
\]
is well defined, continuous, positive, and nonincreasing for $s>0$.
By \eqref{eq1.6} we have for some positive constant $C$ that 
\begin{equation}\label{eq7.5}
 f(s)\geq Cg(s) \quad\text{for}\quad s\ge 1.
\end{equation}
Since $u$ is a $C^4$ positive solution of \eqref{eqA.1} 
it follows from Theorem \ref{thmA} that $u$ satisfies \eqref{eq3.7}
and, for some constant $M>e$,
\begin{equation}\label{eq7.4}
 u(x)\le M\log\frac{e}{|x|}\quad\text{for}\quad 0<|x|\le 1.
\end{equation}
Since
\[
\frac{g(M|x|^{-2}\log\frac{e}{|x|})}{|x|^{2-\sigma}/\prod_{i=2}^k\log^i\frac1{|x|}}
\to\left(\frac{2}{M}\right)^{1-\sigma/2} \quad \text{as} \quad x\to 0
\]
there exists $r_0\in(0,1/s_0)$ such that 
\begin{equation}\label{eq7.4.5}
g\left(M|x|^{-2}\log\frac{e}{|x|}\right)
\ge \frac{|x|^{2-\sigma}}{M^{1-\sigma/2}\prod_{i=2}^k\log^i\frac1{|x|}} 
\quad \text{for $0<|x|<r_0$.} 
\end{equation}
Let $D=\{ x\in B_{r_0} (0) \setminus \{0\}: |x|^2 \leq u(x) \leq
M\log \frac{e}{|x|} \}$.  Since
$1\leq |x|^{-2} u(x)\leq M|x|^{-2} \log \frac{e}{|x|}$ for $x\in
D$, it follows from \eqref{eq7.2}, \eqref{eq7.5}, and \eqref{eq7.4.5} 
that for $\alpha>0$ we have
 \begin{align*}
  I(\alpha)&:=\int_D|x|^\alpha(-\Delta^2u(x))\,dx\\
  &\geq \int_D |x|^\alpha |x|^{\sigma-6} f(|x|^{-2} u(x)) \,dx\\
  &\geq C \int_D |x|^{\alpha+\sigma-6} g \left(M|x|^{-2} \log \frac{e}{|x|} \right) dx\\
  &\geq C \int_D \frac{|x|^{\alpha-4}}{\prod_{i=2}^k\log^i\frac{1}{|x|}}\,dx.
 \end{align*}
Hence by \eqref{eq7.4}
\begin{align}\label{eq7.6}\notag 
 \int_{|x|<r_0}
\frac{|x|^{\alpha-4}}{\log\frac{e}{|x|}\prod_{i=2}^k\log^i\frac{1}{|x|}} u(x)\,dx 
&\leq M \int_D
\frac{|x|^{\alpha-4}}{\prod_{i=2}^k\log^i\frac{1}{|x|}}\,dx
+ \int_{B_{r_0}(0) \setminus D} 
\frac{|x|^{\alpha-2}}{\log \frac{e}{|x|}\prod_{i=2}^k\log^i\frac{1}{|x|}} \,dx\\
&\le CI(\alpha)+\int_{B_{r_0}(0)}\frac{|x|^{\alpha-2}}{\log \frac{e}{|x|}\prod_{i=2}^k\log^i\frac{1}{|x|}} \,dx.
\end{align}
By \eqref{eq3.7}, $I(2)<\infty$. Thus \eqref{eq7.6} with $\alpha=2$
and Lemma \ref{lem2.7} imply
\begin{equation}\label{eq7.7}
 \int_{|x|<1} -\Delta^2 u(x)\,dx<\infty.
\end{equation}
Hence $I(1/2)<\infty$ and thus by \eqref{eq7.6} with $\alpha=1/2$ we
have 
$$\frac{1}{r^{7/2} \log \frac{e}{r}\prod_{i=2}^k\log^i\frac{1}{r}} 
\int_{|x|<r} u(x)\,dx=o(1) \quad\text{as}\quad r\to 0^{+}.$$ Therefore
\begin{equation}\label{eq7.8}
 \int_{|x|<r} u(x)\,dx= o(r^3) \quad\text{as}\quad r\to 0^{+}.
\end{equation}

Let $F=\Delta^2 u$ and
\[
N(x)=\int_{|y|<1} \Phi(x-y)F(y)\,dy\quad \text{for}\quad 
x\in \mathbb{R}^2
\]  
where $\Phi$ is the fundamental solution of $\Delta^2$ in $\mathbb
R^2$ given by \eqref{eq1.13}.
It follows from \eqref{eq7.7} and \eqref{eq1.13} that $N\in C^1
(\mathbb{R}^2)$.  In particular
\begin{equation}\label{eq7.10}
 U(x):=N(x)-N(0)-(DN)(0)x=o(|x|) \quad\text{as}\quad x\to 0.
\end{equation}
Also for $\varphi \in C^{\infty}_{0} (B_1(0))$ we have
\begin{align*}
 \int_{B_1(0)} N \Delta^2 \varphi \,dx 
&= \int_{B_1(0)} \left( \int_{B_1(0)} 
\Phi(x-y)\Delta^2 \varphi (x)\,dx \right) F(y)\,dy\\
 &=\int_{B_1(0)} \varphi (y)F(y)\,dy.
\end{align*}
Thus $F=\Delta^2N=\Delta^2U$ in ${\cal D\,}'(B_1(0))$.

Let $v=u-U$.  By \eqref{eq7.8} and \eqref{eq7.10} we see that $v$
satifies \eqref{eq2.3}.
Since $\Delta^2 u=F$ in ${\cal D\,}' (B_1(0) \setminus \{0\})$ we have
$\Delta^2 v=\Delta^2 u-\Delta^2 U=0$ in ${\cal D\,}' (B_1(0) \setminus
\{0\})$.  
Thus Lemma \ref{lem2.5} implies for some constant $b$ that
\begin{equation}\label{eq7.12}
 u(x)=U(x)+b|x|^2\log\frac{e}{|x|}+H(x) \quad
\text{in}\quad B_1 (0) \setminus \{0\}
\end{equation}
where $H$ is a $C^{\infty}$ biharmonic function in $B_1 (0)$.

 It follows from \eqref{eq7.12}, \eqref{eq7.10}, and the positivity of
 $u$ that $H(0)\geq 0$.  If
$H(0)>0$ then, by \eqref{eq7.12} and \eqref{eq7.10}, $u(x)>\vp >0$
for $|x|$ small and positive, which contradicts \eqref{eq7.8}.  Thus
$H(0)=0$. 
Hence by \eqref{eq7.12}, \eqref{eq7.10}, and the positivity
of $u$ we have $DH(0)=0$ and thus 
\begin{equation}\label{eq7.19}
 H(x)=O(|x|^2)\quad\text{as}\quad x\to 0.
\end{equation}

For $x\in \mathbb R^2$ we see by Lemma \ref{lem2.1} that
\[
\frac{1}{|\partial B_r |} \int_{|y|=r} |x-y|^2\log\frac{e}{|x-y|}\,dS_y=
\begin{cases}
   r^2\log\frac{e}{r} + |x|^2\log\frac{1}{r}, &\text{if } |x|<r \\
   |x|^2\log\frac{e}{|x|} + r^2\log\frac{1}{|x|}, &\text{if } |x|>r.
  \end{cases}
 \]
It therefore follows from \eqref{eq7.10} and \eqref{eq7.7} that for
 $0<r<e^{-1}$ we have
 \begin{align*}
 \Big|\frac{1}{A} \bar U(r) \Big| 
=&  \Big|\frac{1}{A} \overline{(N-N(0))}(r) \Big|\\ 
=& \Big| \int_{|y|<1} \frac{1}{|\partial B_r |} 
\int_{|x|=r} \left(\frac{-\Phi (x-y)}{A} +\frac{\Phi(y)}{A} \right)dS_x 
(-\Delta^2 u(y)) \,dy \Big| \\
  =& \Big| \int_{|y|<1} \left(\frac{1}{|\partial B_r |} \int_{|x|=r} 
|x-y|^2 \log \frac{e}{|x-y|} \, dS_x - |y|^2 \log \frac{e}{|y|} \right) (-\Delta^2 u(y))\,dy \Big| \\
  =& \Big| \int_{|y|<r} \left(r^2 \log \frac{e}{r} 
   + |y|^2\log \frac{1}{r}   -|y|^2 \log \frac{e}{|y|} \right) (-\Delta^2 u(y))\,dy \\
  &+ \int_{r<|y|<1} \left(r^2 \log \frac{1}{|y|}\right) (-\Delta^2 u(y))\,dy \Big| \\
  \leq& r^2 \log \frac{1}{r} \int_{r<|y|<(\log \frac{1}{r})^{-1}}
  -\Delta^2 u(y)\,dy+r^2 \log \log \frac{1}{r} 
  \int_{|y|<1} 
-\Delta^2 u(y) \,dy \\
  &+r^2 \log \frac{e}{r} \int_{|y|<r} \Big| 1+\frac{e^2 \left(\frac{|y|}{er} \right)^2 \log \frac{|y|}{er}}{\log \frac{e}{r}} \Big| (-\Delta^2 u(y))\,dy \\
  =&o \left(r^2 \log \frac{e}{r} \right) \quad\text{as}\quad r\to 0^{+}.
 \end{align*}
 Thus averaging \eqref{eq7.12} and noting \eqref{eq7.19} we get
 \[
\bar{u}(r)=br^2 \log \frac{e}{r}+o(r^2 \log \frac{e}{r})
\quad\text{as}\quad r\to 0^+
\] which
 together with the positivity of $u$ implies
\begin{equation}\label{eq7.20}
 b\geq0.
\end{equation}

It follows from the integral form of the remainder in Taylor's theorem 
that if $x,y \in B_1 (0) \setminus \{0\}$ are such that $tx-y \neq0$ 
for all $t \in [0,1]$ then 
\[
\Phi(x-y)-\Phi(-y)-D\Phi(-y)x=-2A|x|^2 \int^{1}_{0} (1-t) \log
\frac{e}{|tx-y|} \,dt+\hat{\Phi}(x, y)
\]  
where
\[
  |\hat{\Phi}(x,y)| = A\Big| \frac{|x|^2}{2} +2\int^{1}_{0} (1-t) \frac{[(tx-y) \cdot x]^2}{|tx-y|^2} \,dt \Big| 
  \leq \frac{3}{2}A|x|^2 .
 \]
Thus for $x\in B_1 (0) \setminus \{0\}$ we have
\begin{equation}\label{eq7.21}
 U (x)=2A|x|^2 \int_{|y|<1} \int^{1}_{0} (1-t) 
\log \frac{e}{|tx-y|} \, dt 
(-\Delta^2 u(y)) \,dy+O(|x|^2 )
\end{equation}
by \eqref{eq7.7}.

By Fatou's lemma,
\begin{equation}\label{eq7.22}
 \liminf_{x\to 0} \int_{|y|<1} \int^{1}_{0} (1-t) 
\log \frac{e}{|tx-y|} \, dt 
(-\Delta^2 u(y))\,dy \geq \frac{1}{2} 
\int_{|y|<1} \log \frac{e}{|y|}\,  
 (-\Delta^2 u(y))\,dy.
\end{equation}

\noindent \emph{Case I.} Suppose $\int_{|y|<1} \left(\log \frac{e}{|y|} \right)
(-\Delta^2 u(y))\,dy=\infty$.  Then it follows from
\eqref{eq7.21}, \eqref{eq7.22}, \eqref{eq7.20}, \eqref{eq7.19}, and
\eqref{eq7.12} that $u(x)>>|x|^2$ as $x\to0$.  Thus, reversing the
original change of variables \eqref{eq7.1}, we have $v(y)>1$
for $|y|\geq r_0 /2$ for some $r_0 >2$ and $v(y)$ is a
solution of
\begin{equation}\label{eq7.23}
 -\Delta^2 v \geq |y|^{-\sigma}f(v)
\geq C|y|^{-\sigma}g(v)\geq C|y|^{-\sigma}v^{-1+\sigma/2}
\quad\text{ in }\mathbb{R}^2 \setminus B_{r_0 /2} (0)
\end{equation}
where $C$ is a positive constant and $g$ is the function in
\eqref{eq7.5}.  Averaging \eqref{eq7.23} we see that $\bar{v}(r)$ is a
positive radial solution of $-\Delta^2 \bar{v}\geq
C|y|^{-\sigma}\bar v^{-1+\sigma/2}$ in
$\mathbb{R}^2 \setminus B_{r_0 /2}(0)$ which contradicts Lemma \ref{lem2.8} and
completes the proof of Theorem \ref{thm1.3} in Case I.  \medskip

\noindent \emph{Case II.} Suppose $\int_{|y|<1} \left(\log
  \frac{e}{|y|} \right) (-\Delta^2 u(y))\,dy<\infty$.  Then
since 
\[
\log \frac{e}{|tx-y|}=\log \frac{e}{|y|}+\log
\frac{|y|}{|y-tx|}
\] 
we see that \eqref{eq7.21} and Lemma \ref{lem2.9} imply
$U (x)=O(|x|^2 )$ as $x\to 0$.  Hence if $b>0$ (resp. $b=0$)
then it follows from \eqref{eq7.12} and \eqref{eq7.19} that
\[
u(x)>>|x|^2 \quad (\text{resp. } u(x)=O(|x|^2 )) \quad\text{as}\quad x\to 0.
\]
If $u(x)>>|x|^2$
as $x\to0$ then we obtain a contradiction as in Case I.  Thus we can
assume for some $s_0>0$ that $|x|^{-2} u(x)<s_0$ for $0<|x|\le 1$.  Hence
reversing the original change of variables \eqref{eq7.1} we get
\begin{equation}\label{eq7.24}
 0<v(y)<s_0 \quad\text{for}\quad|y|\geq 1
\end{equation}
and $v(y)$ is a solution of
\begin{equation}\label{eq7.25}
 -\Delta^2 v\geq |y|^{-\sigma}f(v) \quad
\text{in}\quad \mathbb{R}^2 \setminus B_1 (0).
\end{equation}
We can assume $f|_{(0, s_0]}$ is $C^2$ and 
$(f|_{(0,s_0]})''>0$ because, as one easily verifies, the function
$\hat{f}:(0, s_0 ]\to(0,\infty)$ defined by 
\[
\hat{f}(s)=\frac{1}{s_0^2}
\int^{s}_{0} (s-\zeta) \left(\min_{\zeta \leq \tau \leq s_0 } f(\tau)
\right) d\zeta
\] 
is $C^2$ and satisfies 
\[0<\hat{f}(s)\leq f(s) \quad\text{and}\quad
\hat{f}^{''}(s)>0 \quad \text{for } 0<s\leq s_0.  
\]
Thus averaging \eqref{eq7.25} and noting \eqref{eq7.24} we find that
$\bar{v}(r)$ is a positive solution of $-\Delta^2 \bar{v}\geq
|y|^{-\sigma}f(\bar{v})$ in $\mathbb{R}^2 \setminus B_1 (0)$ which contradicts
Lemma \ref{lem2.8} and completes the proof of Theorem \ref{thm1.3} in
all cases.
\end{proof}

\appendix
\section{Represention formula and pointwise bound}\label{secA}

Let $\Phi$ be the fundamental solution of $\Delta^2$ in $\mathbb{R}^n$
given by \eqref{eq1.10}--\eqref{eq1.13} and
  for $x\neq0$ and $y\neq x$, let
\begin{equation}\label{eq3.5}
 \Psi(x,y)=\Phi(x-y)-\sum_{|\beta|\leq 1} \frac{(-y)^\beta}{\beta!} 
D^\beta \Phi(x)
\end{equation}
be the error in approximating $\Phi(x-y)$ with the partial sum of
degree one of the Taylor series of $\Phi$ at $x$.  

The following theorem, which we proved in \cite{GMT}, gives
representation formula \eqref{eq3.8} and pointwise bound \eqref{eq3.6}
for nonnegative solutions of
\begin{equation}\label{eqA.1}
-\Delta^2u\ge 0 \quad{in}\quad B_2(0)\setminus\{0\}\subset\mathbb R^n.
\end{equation}
See \cite{FKM} and \cite{FM} for some similar results.
\begin{thm}\label{thmA}
Let $u(x)$ be a $C^4$ nonnegative solution of \eqref{eqA.1} where
$n\ge 2$. Then 
\begin{equation}\label{eq3.6}
 u(x)=
\begin{cases}
O(|x|^{2-n})&\text{if $n\ge 3$}\\
O\left(\log\frac{e}{|x|}\right)&\text{if $n=2$}
\end{cases}
\quad \text{as}\quad x\to 0,
\end{equation}
\begin{equation}\label{eq3.7}
 \int_{|x|<1} |x|^2 (-\Delta^2 u(x))\,dx<\infty,
\end{equation}
and
\begin{equation}\label{eq3.8}
 u=N+h+\sum_{|\beta|\leq 2} a_\beta D^\beta \Phi \quad\text{in}
\quad B_1(0) \setminus \{0\},
\end{equation}
where $a_\beta$, $|\beta|\leq 2$, are constants, $h\in C^\infty
(\overline{B_1(0)})$ is a solution of $\Delta^2 h=0$ in
$\overline{B_1(0)}$, and
\begin{equation}\label{eq3.9}
 N(x)=\int_{|y|<1} \Psi(x,y)\Delta^2 u(y)\,dy \quad\text{for}\quad x\neq 0.
\end{equation}
\end{thm}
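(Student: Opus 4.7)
The plan is to establish the three conclusions in the order \eqref{eq3.6}, \eqref{eq3.7}, \eqref{eq3.8}, since each step uses the previous ones. Throughout, writing $\mu := -\Delta^2 u$, the hypothesis says $\mu$ is a nonnegative Radon measure on $B_2(0)\setminus\{0\}$, and the strategy is to transfer information between $u$ and $\mu$ via suitable fundamental-solution representations.

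For the pointwise bound \eqref{eq3.6}, I would first pass to the spherical average $\bar u(r)$, which is a radial $C^4$ nonnegative function satisfying $\Delta^2 \bar u = \overline{\Delta^2 u}\le 0$ on $(0,2)$. Integrating twice against the radial fundamental solutions of $\Delta$ expresses $\bar u(r)$ as a sum of the four radial biharmonic modes (spanned by $1,r^2,r^{2-n},r^{4-n}$, with logarithmic substitutes when $n=2,4$) plus an iterated nonnegative integral of $-\overline{\Delta^2 u}$; the sign constraint $\bar u\ge0$ bounds the coefficient of the most singular mode and yields $\bar u(r)=O(r^{2-n})$ (respectively $O(\log(e/r))$ when $n=2$). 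To upgrade to a pointwise bound, I would multiply $u$ by a cutoff $\eta\in C_c^\infty(B_{3/2})$ equal to one on $B_1$ and observe that on all of $\mathbb R^n$
\[
\Delta^2(\eta u)=-\eta\mu+g+\sum_{|\beta|\le K}a_\beta D^\beta\delta_0
\]
distributionally, where $g$ is smooth and supported in the annulus $B_{3/2}\setminus B_1$, by Brezis--Lions; the spherical-average bound pins down $K\le 2$. Convolving with $\Phi$ and controlling the smooth parts then gives \eqref{eq3.6}.

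For \eqref{eq3.7}, once \eqref{eq3.6} is available I would test $-\Delta^2 u\ge 0$ against a nonnegative function $\varphi\in C^\infty(\overline{B_1(0)})$ which equals $|x|^2$ near the origin and vanishes near $\partial B_1$. Integration by parts on the annulus $\{r<|x|<1\}$ and letting $r\to 0^+$ yields
\[
\int_{|x|<1}\varphi(x)(-\Delta^2 u(x))\,dx=\int_{|x|<1}u(x)\Delta^2\varphi(x)\,dx+(\text{controlled boundary terms}),
\]
where \eqref{eq3.6} bounds the right-hand side and the boundary terms on $\{|x|=r\}$ vanish in the limit because $\varphi$ and its derivatives vanish sufficiently fast at $0$ to beat the growth of $u$, $\nabla u$, $\Delta u$, and $\nabla\Delta u$ that one gets from \eqref{eq3.6} together with standard interior estimates for $\Delta^2 u\le 0$. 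This delivers $\int_{|x|<1}|x|^2(-\Delta^2 u)\,dx<\infty$.

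With \eqref{eq3.7} in hand, the function $N(x)=\int_{|y|<1}\Psi(x,y)\Delta^2 u(y)\,dy$ is well defined for $x\ne 0$, because the degree-one Taylor remainder formula gives
\[
|\Psi(x,y)|\le C|y|^2\sup_{t\in[0,1]}|D^2\Phi(x-ty)|,
\]
so the integrand is dominated by $|y|^2|\Delta^2 u(y)|$ times a function bounded for $y$ near $0$ and $x$ fixed. Since the subtracted Taylor polynomial of $\Phi(x-y)$ in $y$ is biharmonic in $x$, $\Delta^2 N=\Delta^2 u$ as distributions on $B_1$. Consequently $v:=u-N$ is biharmonic on $B_1\setminus\{0\}$ with sufficiently controlled growth at $0$ (via \eqref{eq3.6} and the easy pointwise bound on $N$), and the classical characterization of isolated singularities of biharmonic functions writes $v=h+\sum_{|\beta|\le 2}a_\beta D^\beta\Phi$ with $h\in C^\infty(\overline{B_1(0)})$ biharmonic, giving \eqref{eq3.8}.

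The main obstacle is the first step: the pointwise bound \eqref{eq3.6} and the simultaneous control that only $|\beta|\le 2$ derivatives of $\delta_0$ can appear in the Brezis--Lions decomposition. These two issues are coupled, since a priori control of the order $K$ of the delta-derivatives is precisely what allows one to convert the spherical-average bound into a pointwise bound with the right exponent, while the pointwise bound is what rules out higher-order deltas. Breaking this circularity, and handling the dimension-dependent logarithmic corrections in $\Phi$ for $n=2,3,4$ uniformly, is the technical heart of the argument.
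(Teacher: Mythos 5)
The paper does not actually prove Theorem~\ref{thmA}; the text immediately preceding it states that the theorem ``we proved in \cite{GMT},'' so this result is imported from an earlier paper by Ghergu, Moradifam, and Taliaferro and there is no proof in the present source for me to compare yours against.

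That said, your outline has a real logical problem in its ordering, beyond the ``circularity'' you yourself flag at the end. You propose to prove the pointwise bound \eqref{eq3.6} \emph{first}, by writing $\Delta^2(\eta u)=-\eta\mu+g+\sum_{|\beta|\le K}a_\beta D^\beta\delta_0$ in $\mathcal D'(\mathbb R^n)$ and convolving with $\Phi$. But $\mu=-\Delta^2u$ is only known a priori to be a nonnegative Radon measure on $B_2(0)\setminus\{0\}$; near the origin it could be far from locally finite, so $\eta\mu$ need not define a distribution on all of $\mathbb R^n$, and the Brezis--Lions decomposition you invoke simply does not get off the ground. Ensuring that $\mu$ has enough integrability near $0$ is precisely the content of \eqref{eq3.7}. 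In other words, you must establish \eqref{eq3.7} \emph{before} any Brezis--Lions step, not afterwards, and \eqref{eq3.6} then falls out of the representation formula \eqref{eq3.8}, not the other way around. The correct chain of dependencies is: spherical-average bound on $\bar u$ (via the radial ODE you describe) $\Rightarrow$ \eqref{eq3.7} $\Rightarrow$ well-definedness of $N$ in \eqref{eq3.9} and the representation \eqref{eq3.8} $\Rightarrow$ pointwise bound \eqref{eq3.6}.

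There is also a secondary gap in your test-function argument for \eqref{eq3.7}. When you integrate by parts over the annulus $\{r<|x|<1\}$ against $\varphi\approx|x|^2$ and let $r\to 0^+$, the surface terms on $|x|=r$ involve $u$, $\partial_r u$, $\Delta u$, $\partial_r\Delta u$. You claim these are controlled by ``\eqref{eq3.6} together with standard interior estimates for $\Delta^2 u\le 0$,'' but there is no standard elliptic estimate producing pointwise derivative bounds from the one-sided inequality $-\Delta^2 u\ge 0$ (this is not a PDE, and $w:=-\Delta u$ has no sign a priori). Even if such bounds held with the same order as $u$, e.g.\ $|\partial_r u|\lesssim r^{1-n}$, the term $\partial_r u\cdot\Delta\varphi\cdot r^{n-1}=O(1)$ would not vanish, so the boundary terms are genuinely borderline. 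The standard way around this, and the one consistent with the rest of the paper's machinery (cf.\ Lemmas~\ref{lem2.1} and \ref{lem2.7}), is to work entirely with the spherically averaged quantity $\bar u(\rho)$: since the admissible test functions can be taken radial, the desired inequality $\int_0^1\rho^{n+1}(-\overline{\Delta^2 u}(\rho))\,d\rho<\infty$ follows directly from the radial ODE for $\bar u$, and no pointwise derivative bounds on $u$ are needed at all. Once \eqref{eq3.7} is secured in this way, the rest of your outline (defining $N$, verifying $\Delta^2 N=\Delta^2 u$ in $\mathcal D'(B_1)$, and applying a Brezis--Lions/Bôcher-type classification of biharmonic functions with an isolated singularity of at most the growth \eqref{eq3.6}) is the right skeleton, and the remaining work is to show that the coefficients $a_\beta$ for $|\beta|\ge 3$ must vanish, which you correctly identify as the technical core but do not supply.
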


\n {\bf Acknowledgement}. We would like to thank Guido Sweers for
helpful comments.


\begin{thebibliography}{10}
\bibitem{AS} Armstrong, S., Sirakov, B. (2011). Nonexistence of positive
  supersolutions of elliptic equations via the maximum principle. {\it
    Comm. Partial Differential Equations} 36:2011--2047.

\bibitem{BL} Brezis, H., Lions, P.L. (1981). A note on isolated
  singularities for linear elliptic equations.  Mathematical analysis
  and applications, Part A, pp. 263--266, Adv. in Math. Suppl. Stud.,
  7a, Academic Press, New York-London.


\bibitem{CDM} Caristi, G., D'Ambrosio, L., Mitidieri, E. (2008).
  Representation formulae for solutions to some classes of higher
  order systems and related Liouville theorems. {\it Milan J. Math.}
  76:27--67.

\bibitem{CMS} Caristi, G., Mitidieri, E., Soranzo, R. (1998).
  Isolated singularities of polyharmonic equations. {\it Atti
    Sem. Mat. Fis. Univ. Modena} 46:257--294.

\bibitem{CX} Choi, Y.S., Xu, X. (2009). Nonlinear biharmonic equations
  with negative exponents. {\it J. Differential Equations}
  246:216--234.

\bibitem{FKM} Futamura, T., Kishi, K., Mizuta, Y. (2003). Removability
  of sets for sub-polyharmonic functions. {\it Hiroshima Math. J.}
  33:31--42.

\bibitem{FM} Futamura, T., Mizuta, Y. (2004). Isolated singularities
  of super-polyharmonic functions. {\it Hokkaido Math. J.}
  33:675--695.

\bibitem{GGS} Gazzola, F., Grunau, H.-C., Sweers, G. (2010). Polyharmonic
  Boundary Value Problems, Springer.

\bibitem{GMT} Ghergu, M., Moradifam, A., Taliaferro,
  S. (2011). Isolated singularities of polyharmonic inequalities. {\it
    J. Funct. Anal.}  261:660--680.

\bibitem{G} Guerra, I. (2012). A note on the nonlinear biharmonic equations
  with negative exponents, {\it J. Differential Equations}, 253:3147--3157.

\bibitem{GL} Guo, Y., Liu, J. (2008). Liouville-type theorems for
  polyharmonic equations in ${\bb R}^N$ and in ${\bb R}_+^N$. {\it
    Proc. Roy. Soc. Edinburgh Sect. A} 138:339--359.

\bibitem{H} Hsu, S.-Y. (2010). Removable singularity of the
  polyharmonic equation. {\it Nonlinear Anal.} 72:624--627.

\bibitem{MR} McKenna, P.J., Reichel, W. (2003). Radial solutions of
  singular nonlinear biharmonic equations and applications to
  conformal geometry. {\it Electron. J. Differential
    Equations}. 37:13~pp.

\bibitem{MP} Mitidieri, E., Pohozaev, S.I. (2001). {\it Apriori Estimates and
  Blow-up of Solutions to Nonlinear Partial Differential Equations and
  Inequalities}.



\bibitem{T} Taliaferro, S. (in press). Pointwise Bounds and Blow-up
  for Nonlinear Polyharmonic Inequalities. {\it Ann. Inst. H. Poincare
    Anal. Non lineaire}.

\bibitem{WX} Wei, J., Xu, X. (1999). Classification of solutions of
  higher order conformally invariant equations.  {\it Math. Ann.}
  313:207--228.

\bibitem{X} Xu, X. (2000). Uniqueness theorem for the entire positive
  solutions of biharmonic equations in
  $R^n$. {\it Proc. Roy. Soc. Edinburgh Sect. A} 130:651--670.
\end{thebibliography}
\end{document}